\documentclass{dis}

\usepackage{amsmath, amssymb, amsthm, amsfonts, enumerate, color, comment}
\usepackage{mathrsfs}
\usepackage{parskip}
\usepackage{xcolor}

\newcommand{\phm}{\phantom{-}}
\usepackage{mathtools}
\mathtoolsset{showonlyrefs}

\usepackage{palatino}
\usepackage{graphicx}

\usepackage{parskip}

\theoremstyle{plain}
\newtheorem{Proposition}[equation]{Proposition}
\newtheorem{Corollary}[equation]{Corollary}
\newtheorem*{Corollary*}{Corollary}
\newtheorem{Theorem}[equation]{Theorem}
\newtheorem*{Theorem*}{Theorem}
\newtheorem{Lemma}[equation]{Lemma}
\theoremstyle{definition}
\newtheorem{Definition}[equation]{Definition}

\newtheorem{Example}[equation]{Example}
\newtheorem{Remark}[equation]{Remark}

\usepackage{enumitem}
\setlist[enumerate]{leftmargin=*}
\setlist[itemize]{leftmargin=*}

\setlist[enumerate,1]{label=(\alph*),font=\upshape}

\setlist[enumerate,2]{label=(\roman*),font=\upshape}

\def\C{\mathbb{C}}
\def\R{\mathbb{R}}
\def\D{\mathbb{D}}
\def\T{\mathbb{T}}
\def\N{\mathbb{N}}
\def\Z{\mathbb{Z}}

\newcommand{\h}{\mathcal{H}}

\newcommand{\M}{\emph{\textbf{M}}}

\newcommand{\U}{\mathbf{U}}

\renewcommand{\leq}{\leqslant}
\renewcommand{\geq}{\geqslant}
\renewcommand{\subset}{\subseteq}
\renewcommand{\phi}{\varphi}
\renewcommand{\vec}[1]{{\bf #1}}

\usepackage{xcolor}

\chaptersnewpage
\refnewpage
\begin{document}

\keywords{Conjugations, unitary operators }
\mathclass{ 47B35, 47B02, 47A05}

\thanks{Work supported by the NSERC Discovery Grant (Canada), the Canada Research Chairs program,  and by the Ministry of Science and
Higher Education of the Republic of Poland.}

\abbrevauthors{J. Mashreghi and M. Ptak and W. T. Ross}
\abbrevtitle{Conjugate orbit}

\title{The Conjugate Orbit of a Unitary Operator}

\author{Javad Mashreghi}
\address{D\'epartement de Math\'ematiques et de Statistique\\ 
Universit\'e Laval\\ Qu\'ebec, QC
Canada, G1K 0A6\\
E-mail: javad.mashreghi@mat.ulaval.ca}

\author{Marek Ptak}
\address{Department of Applied Mathematics\\
University of Agriculture, ul. Balicka 253c\\ 30-198 Krak\'ow, Poland\\
Email: rmptak@cyf-kr.edu.pl}

\author{William T. Ross}
	\address{Department of Mathematics and Statistics\\
	 University of Richmond\\Richmond, VA 23173, USA\\
	Email: wross@richmond.edu}

\maketitledis

\tableofcontents

\begin{abstract}
This paper discusses various aspects of  the collection of unitary operators $CUC$, where $U$ is a fixed unitary operator  on a complex Hilbert space $\mathcal{H}$ and $C$ varies over the set of all conjugations on $\mathcal{H}$ (antilinear, isometric, involutions). We call this class of unitary operators, the {\em conjugate orbit }of $U$ and denote it  by $\mathfrak{O}_c(U)$. We will see that $U^{*}$, the Hilbert space adjoint of $U$, always belongs to $\mathfrak{O}_c(U)$, while $U$ belongs to $\mathfrak{O}_c(U)$ only when $U$ is unitarily equivalent to $U^{*}$, making $U$ a member of $\mathfrak{O}_c(U)$ an uncommon event.  We completely describe the conjugate orbit of the classical bilateral shift and discuss when a unitary multiplication operator on the classical Lebesgue space of the unit circle belongs to this conjugate orbit. We also broaden this discussion to include the bilateral shifts of higher multiplicity which, via unitary equivalence, makes connections to other interesting unitary operators such as the translation and dilation operators on the Lebesgue space of the real line. Finite unitary matrices provide us with a rich source of examples of conjugate unitary orbits to discuss. In particular, we determine which diagonal matrices, if any, belong to the conjugate orbit of a fixed unitary matrix. 
Closely related to the finite unitary matrices are the diagonalizable unitary operators with respect to some, possibly infinite, orthonormal basis. We give a large class of variations of these unitary operators that belong to the conjugate orbit and establish a connection to the classical Fourier--Plancherel and Hilbert transforms. 
Finally, we develop a model for a unitary operator using real Hilbert spaces and use it to describe the conjugate orbit as well as revisit some of our previous discussions in another light. 
\end{abstract}

\makeabstract

\chapter{Introduction}

This paper initiates a study of certain classes of linear transformations on a complex Hilbert space and their connections to a well-studied notion of a conjugation in a vector space that shares the same properties as complex conjugation in $\C$. The subject of our investigation is inspired by the following idea. 
For a fixed unitary operator  $U$ on a complex Hilbert space $\mathcal{H}$  ($U$ is linear, surjective, and isometric -- equivalently $U^{*} U = U U^{*} = I$), one can define the {\em unitary orbit} $\mathfrak{O}(U)$ of $U$
 to be the collection of operators $V U V^{*}$, where $V$ ranges over all the unitary operators on  $\mathcal{H}$. In other words, $\mathfrak{O}(U)$ is the set of all unitary operators on $\mathcal{H}$ that are unitarily equivalent to $U$. By the spectral theorem, $\mathfrak{O}(U)$ consists of all the unitary operators  on $\mathcal{H}$ with the same spectral parameters as $U$ \cite[Ch.~IX]{ConwayFA}. For example, when $U$ is an $n \times n$ unitary matrix, regarded as a linear transformation on $\C^n$ (viewed as column vectors) by $\vec{x} \mapsto U \vec{x}$, the unitary orbit $\mathfrak{O}(U)$ is the collection of all $n \times n$ unitary matrices which have the same eigenvalues as $U$ along with the same corresponding (algebraic/geometric) multiplicities. There is a generalization of this spectral and multiplicity theory for unitary operators on infinite dimensional Hilbert spaces (see \cite{ConwayFA} or \cite{MR0045309}). So, in a way, the unitary orbit $\mathfrak{O}(U)$ is well understood. 

In this paper we examine the {\em conjugate orbit}
$$\mathfrak{O}_c(U) := \{C U C: \mbox{$C$ is a conjugation on $\h$}\}$$
of a unitary operator $U$ on $\mathcal{H}$.
Here, a {\em conjugation} $C$ on a complex Hilbert space $\mathcal{H}$ is an antilinear,  involutive, isometry in that
$C(\alpha \vec{x} + \vec{y}) = \overline{\alpha} C \vec{x} + C \vec{y}$ for all $\vec{x}, \vec{y} \in \mathcal{H}$ and $\alpha \in \C$ (antilinear), $C^2 := C \circ C = I$ (involutive), and $\|C \vec{x}\| = \|\vec{x}\|$ for all $\vec{x} \in \mathcal{H}$ (isometric). We refer the reader to the papers \cite{MR2187654, MR2302518, MR2749452, MR4747045, MR4745001} for a thorough discussion of conjugations. Simple examples of conjugations include $C \vec{x} = \overline{\vec{x}}$ on $\C^{n}$ (complex conjugate each entry of $\vec{x}$); $C \vec{x} = V \overline{\vec{x}}$ on $\C^{n}$, where $V$ is an $n \times n$  self-transpose unitary matrix; $C f = \bar f$ on $L^2(m)$ ($m$ is normalized Lebesgue measure on the unit circle $\T$); $C f = u \bar f$ on $L^2(m)$ ($u$ is a measurable unimodular function on $\T$);  $(C f)(x) = \bar f(-x)$ on $L^2(\R)$; and $C f = \bar f \circ \alpha$ on $L^2(\R)$, where $\alpha$ is (Lebesgue) measure preserving on $\R$ with $\alpha \circ \alpha = \operatorname{id}$. We will encounter a rich variety of other conjugations throughout this paper.

Since $C U C$ is {\em linear} (the conjugation $C$, which is antilinear,  is applied twice), isometric, and onto, then
 $\mathfrak{O}_c(U)$ is certainly contained inside the class of all unitary operators on $\mathcal{H}$. The purpose of this paper is to initiate a study of 
  $\mathfrak{O}_c(U)$ and, in certain tangible cases, give its complete description. Moreover, we discuss how the two orbits $\mathfrak{O}(U)$ (the unitary orbit of $U$) and $\mathfrak{O}_c(U)$ (the conjugate orbit of $U$) differ. For example, for any $n \times n$ unitary matrix $U$, $\mathfrak{O}(U)$ contains every $n \times n$ diagonal matrix consisting of the eigenvalues of $U$ (with respective multiplicities). However, there are examples of unitary matrices $U$ for which $\mathfrak{O}_c(U)$ contains some diagonal matrices consisting of the eigenvalues of $U^{*}$, but not others. Still further, there are examples of unitary matrices $U$ where $\mathfrak{O}_c(U)$ contains no diagonal matrices. For a general unitary operator $U$ we will show that  although $U \in \mathfrak{O}(U)$, it is only under quite special circumstances that $U \in \mathfrak{O}_c(U)$. However, it is always the case that  $\mathfrak{O}_c(U) \subset \mathfrak{O}(U^{*})$. 

We also discuss the relative ``size'' of $\mathfrak{O}_c(U)$ and describe when the unitary operators  $C U C$ and $C' U C'$, where $C$ and $C'$ are conjugations, represent the same element of $\mathfrak{O}_c(U)$. In addition, we give a tangible characterization of the conjugate orbit of the classical bilateral shift $(M f)(\xi) = \xi f(\xi)$ on $L^2(m)$, as certain unitary weighted shifts, and discuss when a unitary multiplication operator $M_{\phi} f = \phi f$, $\phi \in L^{\infty}(m)$ and unimodular, belongs to the conjugate orbit of $M$.  Finally, we develop a model for unitary operators involving real Hilbert spaces, along with the notion of complexification, and use it to describe the conjugate orbit of a general unitary operator. 

This paper is structured as follows. In Chapter \ref{S-One} we give some basic facts about the conjugate orbit $\mathfrak{O}_c(U)$ and remark that $U^{*} $, the Hilbert space adjoint of $U$,  always belongs to $\mathfrak{O}_c(U)$ while $U$ belongs to $\mathfrak{O}_c(U)$ if and only if $U$ is unitarily equivalent to $U^{*}$. We also show that $\mathfrak{O}_c(U) \subset \mathfrak{O}(U^{*})$. When a unitary operator $U$ is diagonalizable with respect to an orthonormal basis $(\vec{u}_j)_{j \geq 1}$, in other words,  $U = \sum_{j \geq 1} \xi_j (\vec{u}_j \otimes \vec{u}_j)$, where $|\xi_j| = 1$ for all $j \geq 1$, then certain  unitary operators  of the form $\sum_{j \geq 1} \bar \xi_{\sigma(j)}  (\vec{u}_j \otimes \vec{u}_j)$, where $\sigma$ belongs to a particular class of permutations of $\N$, belong to $\mathfrak{O}_c(U)$ while others of this type  do not. It is usually the case that the conjugate orbit  of a diagonalizable operator contains many nondiagonalizable unitary operators. Examples discussed here include the classical Fourier--Plancherel  and Hilbert transforms.
 Chapter \ref{S-One} also explores, through various specific examples, when the unitary operators $C_1 U C_1$ and $C_2 U C_2$, for conjugations $C_1$ and $C_2$, represent the same member of the conjugate orbit of $U$. Interesting examples explored here include the classical bilateral shift where the discussion involves certain doubly infinite Toeplitz (Laurent) matrices.

 The special case of $n \times n$  unitary matrices $U$ is taken up in Chapter \ref{S-Four}, where we develop a description of $\mathfrak{O}_c(U)$ as the collection of matrices $V \overline{U} \,\overline{V}$, where $V$ is an $n \times n$  self transpose, i.e., $V^{t} = V$, unitary matrix ($V$ could be a Householder matrix for example) and $\overline{U}$ denotes the matrix $U$ with all of its entries complex conjugated. In addition, we also state a criterion as to which classes of diagonal matrices (if any) belong to $\mathfrak{O}_c(U)$ and point out further differences between the conjugate orbit $\mathfrak{O}_c(U)$ and the unitary orbit $\mathfrak{O}(U)$.

  The bilateral shift $M$, defined on the standard orthonormal basis $(\vec{e}_j)_{j \in \Z}$ for the classical sequence space $\ell^2(\Z)$ by $M \vec{e}_{j} = \vec{e}_{j + 1}$,  equivalently, via Fourier series, $(M f)(\xi) = \xi f(\xi)$ on $L^2(m)$, is taken up in Chapter \ref{S-Five}. Here we show that $\mathfrak{O}_c(M)$ consists precisely of the unitary shifts of the form $\vec{v}_{j} \mapsto \vec{v}_{j + 1}$, where $V = [\cdots| \vec{v}_{-2}|\vec{v}_{-1}|\vec{v}_{0}| \vec{v}_{1}|\vec{v}_{2}|\cdots]$ is a doubly infinite unitary matrix with $V^{t} = V$. Results in this section make the case that very few doubly infinite Toeplitz matrices generated by unimodular symbols (or, equivalently,  unitary multiplication operators on $L^2(m)$) belong to the conjugate orbit of the bilateral shift $M$. Using  a certain form of the spectral theorem for unitary operators, we show that when $\sigma(U)$, the spectrum of a general unitary operator $U$,  has  more then one point, then $\mathfrak{O}_c(U)$ is quite an abundant set. An extension of our discussion of the classical bilateral shift to  bilateral shifts of higher multiplicity continues in Chapter \ref{S-Five}, where we show the complexity and the great differences between the conjugate orbit of the bilateral shift of multiplicity one (where we have a description as certain unitary shifts) and the conjugate orbit of the bilateral shifts of higher multiplicity. We also give several examples of naturally occurring unitary operators, such as the translation and the dilation operators on  $L^2(\R)$, as well as multiplication by an inner function on $L^2(m)$, that are represented by shifts of higher multiplicity. Thus, though these operators are easy to describe, they have quite complicated conjugate orbits. 

In the last several chapters, we develop a model for a unitary operator using real Hilbert spaces along with the concept of complexification and use this model to give a description of the conjugate orbit of a general unitary operator. We will revisit some of the results from the previous sections involving unitary matrices, the Fourier--Plancherel and Hilbert transforms, as well as  the bilateral shift through this model. 

This paper is meant to give a solid foundation as well as an invitation to this topic of conjugate orbits and we invite the reader to continue our initial investigation and describe the conjugate orbit for their favorite unitary operator. 

\chapter{Some Basic Facts About the Conjugate Orbit}\label{S-One}

 Throughout  this paper, $\mathcal{H}$ will denote a complex separable Hilbert space with inner product $\langle \vec{x}, \vec{y}\rangle$ and corresponding norm $\|\vec{x}\| := \sqrt{\langle \vec{x}, \vec{x}\rangle}$. A linear transformation $A$ on $\mathcal{H}$ will be {\em bounded} if its norm $\|A\| := \sup\{\|A \vec{x}\|: \|\vec{x}\| = 1\}$
is finite.
These bounded linear transformations on $\mathcal{H}$ will be called linear {\em operators}.
The focus of this paper will be the {\em unitary operators} $U$ on $\mathcal{H}$. These are the linear transformations $U$ which are isometric ($\|U \vec{x}\| = \|\vec{x}\|$ for all $\vec{x} \in \mathcal{H}$) and surjective ($U \mathcal{H} = \mathcal{H}$). With each bounded linear operator $A$ on $\mathcal{H}$, there is a unique bounded linear operator $A^{*}$, called the {\em adjoint} of $A$, such that 
$\langle A \vec{x}, \vec{y}\rangle = \langle \vec{x}, A^{*} \vec{y}\rangle$ for all $\vec{x}, \vec{y} \in \mathcal{H}$. The criterion that $U$ is a unitary operator on $\mathcal{H}$ can be restated  algebraically as $U^{*} U = U U^{*} = I$ (the identity operator on $\mathcal{H}$).

\section{Some inhabitants of the conjugate orbit}

As $\mathfrak{O}_c(U)$ is contained in the set of all unitary operators on $\mathcal{H}$, it would be helpful to have some readily identifiable elements of $\mathfrak{O}_c(U)$. We begin with the following  fact which follows  from an observation in \cite[Sec.~4]{MR2187654}. We will arrive at this same fact in Remark \ref{2..99999} and again in Remark \ref{sdfsdfRemaRKUU***}.

\begin{Proposition}\label{SdfsdfdsfcvvvvVV}
For any unitary operator $U$ on $\mathcal{H}$ we have $U^{*} \in \mathfrak{O}_c(U)$.
\end{Proposition}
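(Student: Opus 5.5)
We need a conjugation $C$ with $CUC = U^{*}$. The standard route is the fact (from \cite[Sec.~4]{MR2187654}) that every unitary operator is complex symmetric: there is a conjugation $C$ with $CUC = U^{*}$. My plan is to build such a $C$ from the spectral theorem.

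First, by the spectral theorem in multiplication form, there is a finite measure space $(X,\mu)$, a measurable function $\phi\colon X\to\T$, and a unitary $W\colon \h \to L^2(\mu)$ with $WUW^{*} = M_\phi$, where $M_\phi f = \phi f$. On $L^2(\mu)$ take $J f = \bar f$. This $J$ is plainly antilinear, isometric and involutive. Moreover,
$$J M_\phi J f = \overline{\phi\,\bar f} = \bar\phi f = M_\phi^{*} f,$$
so $JM_\phi J = M_\phi^{*}$.

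Next, pull $J$ back to $\h$ by setting $C := W^{*} J W$. Composing a linear unitary with an antilinear isometric involution shows that $C$ is antilinear and isometric, and $C^2 = W^{*}J^2W = I$, so $C$ is a conjugation. Then
$$CUC = W^{*}JW\,U\,W^{*}JW = W^{*} J M_\phi J W = W^{*} M_\phi^{*} W = U^{*}.$$
Hence $U^{*}\in\mathfrak{O}_c(U)$.

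The only substantive ingredient is the multiplication-operator form of the spectral theorem. Since $\h$ is separable, $\mu$ can be taken $\sigma$-finite, or even a direct sum of finite measures on $\T$, so nothing about the setting is problematic. If one prefers to avoid the spectral theorem, the alternative is to decompose $\h$ into cyclic subspaces $\bigvee\{U^n\vec{x}\}$ and define $C$ on each by $\sum a_n U^n \vec{x}\mapsto \sum \bar a_n U^{-n}\vec{x}$. In that approach, checking isometry reduces to the fact that $\langle U^n\vec{x},\vec{x}\rangle$ are the Fourier coefficients of a positive measure on $\T$, which is the same spectral input in another form.
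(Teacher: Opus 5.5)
Your argument is correct and is essentially the paper's own proof. Both represent $U$ via the spectral theorem as a multiplication operator $M_{\phi}$ on an $L^2$ space, note that $Jf=\bar f$ satisfies $JM_{\phi}J=M_{\bar\phi}=M_{\phi}^{*}$, and pull $J$ back to the conjugation $C=W^{*}JW$ on $\mathcal{H}$. (On your alternative cyclic-subspace route: the isometry check there is immediate, because $\langle U^{n}\vec{x},U^{m}\vec{x}\rangle$ depends only on $n-m$, so you do not need positivity of the spectral measure.)
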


\begin{proof}
A version of the spectral theorem for unitary operators says that $U$ is unitarily equivalent to a multiplication operator $M_{\phi} f = \phi f$ on $L^2(\mu, X)$ ($X$ a compact Hausdorff space, $\mu$ a positive finite Borel measure on $X$, and $\phi \in L^{\infty}(\mu, X)$ and unimodular) in that there is an isometric isomorphism $W: \mathcal{H} \to L^2(\mu, X)$  with $M_{\phi} W = W U$ \cite[p.~279]{ConwayFA}. If $J$ is the conjugation on $L^2(\mu, X)$ defined by $J f = \bar{f}$, a calculation verifies that $J M_{\phi} J = M_{\bar{\phi}}  = M_{\phi}^{*}$. Furthermore, one can also verify that $C := W^{*} J W$ is a conjugation on $\mathcal{H}$ which satisfies $C U C = U^{*}$. Thus, $U^{*} \in \mathfrak{O}_c(U)$.
\end{proof}

\begin{Remark}\label{nNNnniiinnnndddD}
For a fixed unitary operator $U$ on $\mathcal{H}$, the paper \cite{MR4747045} characterizes {\em all} of the conjugations $C$ for which $CUC = U^{*}$. For example, when 
$$(U f)(\xi) = \xi f(\xi)$$ is the {\em bilateral shift} on $L^2(m)$ (recall that $m$ is normalized Lebesgue measure on the unit circle $\T$),  a result from  \cite[Theorem 4.1]{MR4040358}  and \cite[Theorem 2.2]{MR4083641}, which will be important later in Example \ref{examplecalculus}, says that any conjugation $C$ on $L^2(m)$ for which $C U C = U^{*}$ must take the form $C f = u \bar{f}$, where $u \in L^{\infty}(m)$ and unimodular. We will discuss the bilateral shift  below and again in greater detail  in Chapter \ref{S-Five}.
\end{Remark}

When is $U \in \mathfrak{O}_c(U)$? It seems like it should be -- just by the use of the term ``orbit''. However,  for this to be true, there must be a conjugation $C$ on $\mathcal{H}$ such that  $C U C = U$.  From \cite[Thm.~1.2]{MR4745001} this only happens under very special circumstances.

\begin{Proposition}\label{e0roiekfgdFF}
For a unitary operator  $U$ on $\mathcal{H}$ we have $U \in \mathfrak{O}_c(U)$ if and only if $U$ is unitarily equivalent to $U^{*}$.
\end{Proposition}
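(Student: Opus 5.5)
The plan is to reduce both directions to Proposition \ref{SdfsdfdsfcvvvvVV}, which supplies a conjugation $J$ with $J U J = U^{*}$. The underlying observation is that a product of two conjugations is a unitary operator. Indeed, if $C_1, C_2$ are conjugations, then $C_1 C_2$ is linear, being antilinear twice. It is isometric, and it is surjective with inverse $C_2 C_1$.

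\emph{($\Rightarrow$).} Suppose $C U C = U$ for some conjugation $C$. Take $J$ from Proposition \ref{SdfsdfdsfcvvvvVV} and set $V := J C$, which is unitary by the observation above. Then
$$V U V^{*} = J C U C J = J U J = U^{*},$$
where we used $V^{*} = V^{-1} = C J$. Hence $U$ is unitarily equivalent to $U^{*}$. This direction is routine.

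\emph{($\Leftarrow$).} Suppose $W U W^{*} = U^{*}$ for some unitary $W$. We need a conjugation $C$ with $C U C = U$. The natural candidate is $C := J W$ (or $W^{*} J$), since then
$$C U C^{-1} = J W U W^{*} J = J U^{*} J = U.$$
This $C$ is an antilinear isometry, but it need not be an involution. In general $(J W)^2 = J W J W$, which is not $I$ unless $J W J = W^{*}$, that is, unless $W$ is ``$J$-symmetric''. So the key step, and the main obstacle, is to upgrade the antiunitary $A = JW$ commuting with $U$ to a genuine conjugation commuting with $U$.

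To handle this, I would use the spectral model directly rather than an abstract $W$. By the spectral theorem with multiplicity, write $U \cong \bigoplus_k M_{z}$ on $\bigoplus_k L^2(\mu_k)$ with $\mu_1 \gg \mu_2 \gg \cdots$ on $\T$. Unitary equivalence of $U$ and $U^{*}$ means that each measure class $[\mu_k]$ is invariant under $\xi \mapsto \bar\xi$, since $U^{*} \cong \bigoplus M_z$ on $L^2(\mu_k\circ\kappa)$ with $\kappa(\xi) = \bar \xi$. Then I would define, on each summand,
$$(C_k f)(\xi) = \sqrt{\tfrac{d(\mu_k\circ\kappa)}{d\mu_k}(\xi)}\;\overline{f(\bar \xi)}.$$
Next I would check three things. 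First, $C_k$ is isometric; this follows by a change of variables using the Radon--Nikodym derivative. Second, $C_k$ is involutive; this uses the chain-rule identity $h(\xi)h(\bar\xi) = 1$ for $h = d(\mu_k\circ\kappa)/d\mu_k$. Third, $C_k M_z C_k = M_z$, because $\overline{\bar\xi} = \xi$: applying $C_k$, then multiplying by $\xi$, then applying $C_k$ again gives multiplication by $\overline{\bar \xi} = \xi$. Taking the direct sum of the $C_k$ and transporting back by the spectral unitary, as in the proof of Proposition \ref{SdfsdfdsfcvvvvVV}, yields the required conjugation on $\mathcal{H}$. The delicate point I expect is justifying the spectral multiplicity form together with the symmetry of each $[\mu_k]$. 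Alternatively, one may simply cite \cite[Thm.~1.2]{MR4745001}, as the paper indicates.
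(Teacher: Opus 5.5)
Your proof is correct. The ($\Rightarrow$) half is the paper's argument, with your $V=JC$ playing the role of the paper's $V=C_1C$. For ($\Leftarrow$) the paper gives no argument and defers to \cite[Thm.~1.2]{MR4745001}, so your construction is a genuinely different, self-contained route.

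Each step checks out:
\begin{itemize}
\item Uniqueness of the measure classes in the decreasing-chain form of the multiplicity theorem gives $[\mu_k]=[\mu_k\circ\kappa]$.
\item Mutual absolute continuity makes $C_kf=\sqrt{h_k}\,\overline{f\circ\kappa}$ well defined on $\mu_k$-equivalence classes, and the change of variables gives isometry.
\item One has $h_k\circ\kappa=d\mu_k/d(\mu_k\circ\kappa)$, hence $h_k\cdot(h_k\circ\kappa)=1$ a.e., which gives $C_k^2=I$.
\item Since $C_k(\xi g)=\xi\,C_kg$, you get $C_kM_zC_k=M_z$.
\end{itemize}
This is the same weighted composition conjugation the paper uses in the converse half of Theorem \ref{thmmainmult}, with $\bar\phi$ replaced by $\kappa$ and $m$ by $\mu_k$.

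Your construction also disposes of the obstacle you flagged. With $J_0f=\bar f$, you have $C_k=J_0W_k$, where $W_kf=\sqrt{h_k}\,(f\circ\kappa)$. This $W_k$ is a self-adjoint unitary with $W_kM_{\bar z}=M_zW_k$ and $J_0W_kJ_0=W_k=W_k^{*}$. So in the spectral model the intertwiner can always be chosen $J_0$-symmetric, which is exactly the hypothesis of Proposition \ref{sdfhjsd9f9ds9dsf9sdf9s9df} that makes $J_0W$ an involution.

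The cost of your route is reliance on the full uniqueness part of multiplicity theory. The cited reference gives more in return: a description of all conjugations $C$ with $CUC=U$, not just existence of one. For the bilateral shift these are $Cf(\xi)=u(\xi)\overline{f(\bar\xi)}$ with $u(\xi)=u(\bar\xi)$.
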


One direction of this is easy to prove. Indeed, suppose that $C U C = U$. Then, letting $C_1$ be a conjugation for which $C_1 U C_1 = U^{*}$ (Proposition \ref{SdfsdfdsfcvvvvVV}), the mapping $V = C_1 C$ is linear, isometric, and onto -- and thus is a unitary operator. Finally, 
\begin{align*}
U^{*} V & = (C_1 U C_1) (C_1 C)\\
& = C_1 (U C)\\
& = C_1 (C U)\\
& = V U
\end{align*}
and so $U$ and unitarily equivalent to $U^{*}$. 
The other direction is more involved and we refer the reader to \cite[Thm.~1.2]{MR4745001}  for the details. 

For example, by the spectral theorem from linear algebra, the $n \times n$ unitary matrices whose eigenvalues are either one or minus one (or both) are examples of unitary operators that are unitarily equivalent to their adjoints (as are unitary matrices whose eigenvalues occur in conjugate pairs with equal corresponding multiplicities). Another example is the bilateral shift 
$(U f)(\xi) = \xi f(\xi)$ on $L^2(m)$. Indeed, it is easy to check that the unitary operator $(W f)(\xi) = f(\overline{\xi})$ on $L^2(m)$ intertwines $U$ and its adjoint  $(U^{*} f)(\xi) = \overline{\xi} f(\xi)$. Other well-known unitary operators having this property (unitarily equivalent to their adjoint) are the Hilbert and Fourier--Plancherel transforms on $L^2(\R)$. We will discuss both of these operators several times in this paper (see Example \ref{FTFTFT} and Example \ref{ldfgjdfjg888uuUU})  and confirm the above proposition for these operators in Example \ref{examdepleFFTTT} and Example \ref{HTwlkdfj77YY}. For a given unitary operator $U$ on $\mathcal{H}$, the paper \cite{MR4745001} characterizes the  conjugations $C$ on $\mathcal{H}$  for which $C U C = U$ (when such conjugations exist). For example, for the bilateral shift $U$ (as above) on $L^2(m)$, any conjugation $C$ on $L^2(m)$ for which $CUC = U$ is of the form $(C f(\xi) = u(\xi) \overline{f(\bar \xi)}$, where $u \in L^{\infty}(m)$, is unimodular, and satisfies $u(\xi) = u(\bar \xi)$ almost everywhere.

\begin{Remark}\label{singlepoint}
When $U = \lambda I$, where $|\lambda| = 1$ and $I$ is the identity operator on $\mathcal{H}$,  we can use the facts that a conjugation is antilinear and involutive to see that $\mathfrak{O}_c(U) = \{\overline{\lambda} I\}$. So, in this case, the conjugate orbit consists of just one operator. In fact, if $U$ is {\em any} unitary operator whose spectrum $\sigma(U)$ is a single point $\{\lambda\}$, then $\lambda$ is clearly an isolated point of the spectrum and thus must be an eigenvalue. By the spectral theorem, it must be the case that $U = \lambda I$. 
However, we shall see throughout this paper that when its spectrum $\sigma(U)$ consists of more than one point, $\mathfrak{O}_c(U)$ is usually quite a large and rich class of unitary operators  (see Remark \ref{morethanone}). That being said, this next result shows  that, up to unitary equivalence, the conjugate orbit can be regarded as  ``small''.
\end{Remark}

\begin{Proposition}\label{uuueeeqq}
If $U$ is a unitary operator on $\mathcal{H}$ and  $V_1, V_2 \in \mathfrak{O}_c(U)$, then $V_1$ and $V_2$ are unitarily equivalent.
\end{Proposition}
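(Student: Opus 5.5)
Write $V_1 = C_1 U C_1$ and $V_2 = C_2 U C_2$ for conjugations $C_1, C_2$ on $\mathcal{H}$. The plan is to produce an explicit unitary intertwiner built from the two conjugations, exactly as in the easy direction of Proposition \ref{e0roiekfgdFF}. The natural candidate is $W := C_2 C_1$.

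\emph{Step 1: $W$ is unitary.} As a composition of two antilinear maps, $W$ is linear. It is isometric because each $C_j$ is isometric. It is surjective because $W^{-1} = C_1 C_2$, using $C_j^2 = I$. Hence $W$ is a unitary operator on $\mathcal{H}$, with $W^{*} = W^{-1} = C_1 C_2$.

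\emph{Step 2: $W$ intertwines $V_1$ and $V_2$.} Using $C_1^2 = C_2^2 = I$, compute
\begin{align*}
W V_1 W^{*} &= (C_2 C_1)(C_1 U C_1)(C_1 C_2)\\
&= C_2 U C_2\\
&= V_2.
\end{align*}
So $V_2 = W V_1 W^{*}$, and $V_1$ and $V_2$ are unitarily equivalent.

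There is essentially no obstacle here. The only point needing care is Step 1: one must check that the composition of two antilinear isometric involutions is a genuine (linear) unitary. The adjoint identity $W^{*} = W^{-1}$ follows from linearity, isometry and surjectivity, since a surjective linear isometry is unitary. As an alternative route, one could note that Proposition \ref{SdfsdfdsfcvvvvVV} and the same computation show $\mathfrak{O}_c(U) \subset \mathfrak{O}(U^{*})$, taking $W = C_1 C_0$ where $C_0 U C_0 = U^{*}$. Transitivity of unitary equivalence then gives the result, but the direct intertwiner above is shorter.
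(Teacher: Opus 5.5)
Your proof is correct and matches the paper's: the paper also uses the unitary $C_2C_1$, deriving $(C_2C_1)V_1 = V_2(C_2C_1)$ from $C_1V_1C_1 = U = C_2V_2C_2$. Your conjugation identity $(C_2C_1)V_1(C_2C_1)^{*} = V_2$ is the same fact, written with $W^{*} = W^{-1} = C_1C_2$.
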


\begin{proof}
Suppose $V_1 = C_1 U C_1$ and $V_2 = C_2 U C_2$ for conjugations $C_1$ and $C_2$ on $\mathcal{H}$. Then, since $C_{1}^{2} = C_{2}^{2} = I$, we have
$C_1 V_1 C_1 = C_2 V_2 C_2$ and so $(C_2 C_1) V_1 = V_2 (C_2 C_1)$. Since $C_2 C_1$ is unitary (linear, isometric, and onto), we see that $V_1$ is unitarily equivalent to $V_2$.
\end{proof}

Combine Proposition \ref{SdfsdfdsfcvvvvVV} with Proposition \ref{uuueeeqq} to obtain the following. 

\begin{Corollary}\label{Sdfsdfdsf11112}
For any unitary  operator $U$ on $\mathcal{H}$, we have $\mathfrak{O}_c(U) \subset \mathfrak{O}(U^{*})$. 
\end{Corollary}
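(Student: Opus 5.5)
Fix an arbitrary element $V \in \mathfrak{O}_c(U)$. The goal is to exhibit a unitary operator $W$ on $\mathcal{H}$ with $V = W U^{*} W^{*}$, since that is exactly the statement $V \in \mathfrak{O}(U^{*})$. We will not construct $W$ from scratch. Instead, we compare $V$ with a known element of the conjugate orbit, namely $U^{*}$ itself.

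By Proposition \ref{SdfsdfdsfcvvvvVV}, there is a conjugation $C_1$ on $\mathcal{H}$ with $C_1 U C_1 = U^{*}$, so $U^{*} \in \mathfrak{O}_c(U)$. Since $V$ and $U^{*}$ both lie in $\mathfrak{O}_c(U)$, Proposition \ref{uuueeeqq} shows they are unitarily equivalent. Concretely, if $V = C_2 U C_2$, the proof of that proposition gives the unitary $W := C_2 C_1$ with
$$W U^{*} = V W.$$
Hence $V = W U^{*} W^{*}$, which means $V \in \mathfrak{O}(U^{*})$.

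The only step that needs any care is confirming that $C_2 C_1$ is a genuine (linear) unitary operator. It is linear because it is a composition of two antilinear maps. It is isometric because each factor is isometric. It is onto because it has the inverse $C_1 C_2$. All of this was already noted in the proof of Proposition \ref{uuueeeqq}, so there is no real obstacle. Since $V$ was arbitrary, $\mathfrak{O}_c(U) \subset \mathfrak{O}(U^{*})$.
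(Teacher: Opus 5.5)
Your proof is correct and is exactly the paper's argument: the corollary is obtained by combining $U^{*} \in \mathfrak{O}_c(U)$ (Proposition \ref{SdfsdfdsfcvvvvVV}) with the fact that any two members of the conjugate orbit are unitarily equivalent (Proposition \ref{uuueeeqq}). Your explicit intertwiner $W = C_2 C_1$, satisfying $W U^{*} = V W$, is precisely what the proof of Proposition \ref{uuueeeqq} produces.
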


 We will  see in Example \ref{diagoanlalalal} and Example \ref{qudiagonall9} that the two orbits $\mathfrak{O}_c(U)$ and $\mathfrak{O}(U)$ are rarely equal. 

\section{An initial characterization}

This next result  is our initial description of the conjugate orbit of a unitary operator. Though it is a simple observation, it will be  important to later results and  will be refined and expanded throughout this paper. Recall from  Proposition \ref{SdfsdfdsfcvvvvVV} that for any unitary operator  $U$, there is always a conjugation $C$, in fact many of them  \cite{MR4747045} (see also Remark \ref{nNNnniiinnnndddD}), such that $C U C = U^{*}$.

\begin{Proposition}\label{sdfhjsd9f9ds9dsf9sdf9s9df}
Suppose that $U$ is a unitary operator on $\mathcal{H}$ and $C$ is a conjugation on $\mathcal{H}$ such that $C U C = U^{*}$. Then, for any unitary operator $V$ on $\mathcal{H}$, the following are equivalent.
\begin{enumerate}
\item $V \in \mathfrak{O}_c(U)$.
\item There is a unitary operator  $W$ on $\mathcal{H}$ such that $C W C = W^{*}$ and $V = W U^{*} W^{*}$.
\end{enumerate}
\end{Proposition}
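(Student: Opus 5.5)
The plan is to use the fixed conjugation $C$ (with $CUC = U^{*}$) as a base point. Every other conjugation $C'$ is then written as $C' = WC$ with $W := C'C$ unitary. The key identity is the following. If $W$ is unitary, then $WC$ is a conjugation if and only if $CWC = W^{*}$. Indeed, $WC$ is automatically antilinear and isometric, so only involutivity matters:
\begin{equation*}
(WC)(WC) = W (CWC) = I \iff CWC = W^{*}.
\end{equation*}
Here $CWC$ is linear because $C$ is applied twice. Conversely, for any conjugation $C'$, the operator $W := C'C$ is linear, isometric and onto, hence unitary, and $C' = C'CC = WC$.

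For (a) $\Rightarrow$ (b), suppose $V = C'UC'$ for a conjugation $C'$. Set $W = C'C$. By the identity above, $CWC = W^{*}$ and $C' = WC$. Then
\begin{equation*}
V = (WC)U(WC) = W (CUC) (CWC) = W U^{*} W^{*},
\end{equation*}
where we inserted $C^2 = I$ between $U$ and $W$. This is exactly (b).

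For (b) $\Rightarrow$ (a), take $W$ unitary with $CWC = W^{*}$ and $V = WU^{*}W^{*}$. Put $C' := WC$. It is antilinear and isometric, and it is involutive by the identity above, so it is a conjugation. The same calculation run backwards gives $C'UC' = WCUCCWC = WU^{*}W^{*} = V$. Hence $V \in \mathfrak{O}_c(U)$.

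The only point requiring care is the bookkeeping step of inserting $C^2 = I$ so that $CUC$ and $CWC$ appear, together with the observation that involutivity of $WC$ is equivalent to the symmetry condition $CWC = W^{*}$. There is no analytic obstacle; the rest is routine algebra with antilinear maps.
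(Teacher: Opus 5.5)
Your proof is correct and follows essentially the same route as the paper: $W = C'C$ for (a)$\Rightarrow$(b), $C' = WC$ for (b)$\Rightarrow$(a), with the same insertion of $C^{2} = I$. The one cosmetic difference is in (a)$\Rightarrow$(b). You obtain $CWC = W^{*}$ from the involutivity of $WC$ via your equivalence. The paper instead gets both $CWC = W^{*}$ and the factor $W^{*}$ in $V = WU^{*}W^{*}$ from the adjoint identity $(C_1C_2)^{*} = C_2C_1$.
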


Before getting to the proof, there is a small detail to discuss concerning adjoints. Suppose that $C_1$ and $C_2$ are conjugations on $\mathcal{H}$. Then $C_1 C_2$ is a unitary operator since it is linear, isometric, and onto. Moreover, for any $\vec{x}, \vec{y} \in \mathcal{H}$, we can use  properties of conjugations to obtain 
$$
\langle C_1 C_2 \vec{x}, \vec{y}\rangle  = \langle C_1 \vec{y}, C_2 \vec{x}\rangle = \langle \vec{x}, C_2 C_1 \vec{y}\rangle
$$
and thus
\begin{equation}\label{sdfkjgfkgdsfvkjkjknknk}
(C_1 C_2)^{*} = C_2 C_1.
\end{equation}

\begin{proof}[Proof of Proposition \ref{sdfhjsd9f9ds9dsf9sdf9s9df}]
$(a) \Longrightarrow (b)$: If $V \in \mathfrak{O}_c(U)$,  there is a conjugation $C_1$ on $\mathcal{H}$ such that $V = C_1 U C_1$.
Then
\begin{align*}
V & = C_1 U C_1\\
& = C_1 (C U^{*} C) C_{1}\\
& = (C_1 C) U^{*} (C_1 C)^{*}.
\end{align*}
Note the use of \eqref{sdfkjgfkgdsfvkjkjknknk} in the last line of the above calculation.  The operator $W : = C_1 C$ is unitary and, by the previous calculation,  satisfies $W U^{*} W^{*} = V$. Moreover, 
\begin{align*}
C W C & = C (C_1 C) C\\
& = C C_1\\
& = (C_1 C)^{*}\\
& = W^{*}.
\end{align*}

$(b) \Longrightarrow (a)$: Assume that $W$ is a unitary operator that satisfies  
$$V = W U^{*} W^{*} \; \; \mbox{and} \; \; C W C = W^{*}.$$ Then $C_1 := W C$ is a conjugation. Indeed, $C_1$ is isometric and antilinear and 
$$C_{1}^{2} = (WC)(WC) = W W^{*} = I.$$ Moreover, 
\begin{align*}
C_1 U C_1 & = (W C) U (W C)\\
& = (WC) U C C (WC)\\
& = W(C U C)( C WC)\\
& = W (C U C) W^{*}\\
& = W U^{*} W^{*}\\
& = V,
\end{align*}
which completes the proof.
\end{proof}

\begin{Remark}\label{2..99999}
The adjoint identity from \eqref{sdfkjgfkgdsfvkjkjknknk} gives us an alternate way to prove Proposition \ref{SdfsdfdsfcvvvvVV} ($U^{*}$ always belongs to $\mathfrak{O}_c(U)$). Indeed, a result from \cite{MR0190750}  says that any unitary operator $U$ can be written as $U = C_1 C_2$ for some conjugations $C_1$ and $C_2$. From here use \eqref{sdfkjgfkgdsfvkjkjknknk} to  see that 
\begin{align*}
C_1 U C_1 & = C_1 (C_1 C_2) C_1\\
& = C_2 C_1\\
& = (C_1 C_2)^{*}\\
& = U^{*}.
\end{align*}
Thus, $U^{*} \in \mathfrak{O}_c(U)$. 
\end{Remark}

\section{Matrix representations and diagonalization}





Assuming that our ambient Hilbert space $\mathcal{H}$ is separable, it has an orthonormal basis $\mathcal{E} = (\vec{u}_{n})_{n \geq 1}$. What does the matrix representation of the unitary operator $U$ with respect to $\mathcal{E}$ tell us about the corresponding matrix representation of the members of $\mathfrak{O}_c(U)$? For any bounded operator $A$ on $\mathcal{H}$, let
$$[A]_{\mathcal{E}} := [\langle A \vec{u}_j, \vec{u}_i\rangle]_{i, j \geq 1}$$ denote the matrix representation of $A$ with respect to $\mathcal{E}$. Also, for a finite (or  infinite) matrix $M $ of complex numbers, let $\overline{M}$ denote the matrix $M$ with each entry conjugated.

\begin{Proposition}\label{ksdlkfjsldf88}
A unitary operator $V$ on $\mathcal{H}$ belongs to $\mathfrak{O}_c(U)$ if and only if  there is an orthonormal basis $\mathcal{E}$ for $\mathcal{H}$ such that $ [V]_{\mathcal{E}} = \overline{[U]_{\mathcal{E}}}$.
\end{Proposition}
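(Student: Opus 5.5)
The plan is to tie conjugations to orthonormal bases through the standard fact that every conjugation $C$ on a separable $\mathcal{H}$ has a $C$-real orthonormal basis. That is, there is an orthonormal basis $\mathcal{E} = (\vec{u}_n)_{n \geq 1}$ with $C \vec{u}_n = \vec{u}_n$ for all $n$. Conversely, any orthonormal basis $\mathcal{E}$ determines a conjugation by $C_{\mathcal{E}}\bigl(\sum_n a_n \vec{u}_n\bigr) = \sum_n \overline{a_n}\, \vec{u}_n$. The proposition should then follow from one computation relating matrix entries of $CUC$ to those of $U$ when $C$ fixes the basis.

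The key identity is as follows. Let $C$ be a conjugation fixing each $\vec{u}_n$. Since $C$ is isometric and antilinear, polarization gives $\langle C\vec{x}, C\vec{y}\rangle = \langle \vec{y}, \vec{x}\rangle$. Hence
$$\langle CUC \vec{u}_j, \vec{u}_i\rangle = \langle CU\vec{u}_j, C\vec{u}_i\rangle = \langle \vec{u}_i, U\vec{u}_j\rangle = \overline{\langle U \vec{u}_j, \vec{u}_i\rangle},$$
so $[CUC]_{\mathcal{E}} = \overline{[U]_{\mathcal{E}}}$.

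For the direction ($\Rightarrow$), let $V = CUC$, pick a $C$-real orthonormal basis $\mathcal{E}$, and apply the identity. For ($\Leftarrow$), suppose $[V]_{\mathcal{E}} = \overline{[U]_{\mathcal{E}}}$ and set $C = C_{\mathcal{E}}$. I will check that $C_{\mathcal{E}}$ is a conjugation: it is antilinear and involutive, and it is isometric by Parseval. The identity then gives $[C U C]_{\mathcal{E}} = \overline{[U]_{\mathcal{E}}} = [V]_{\mathcal{E}}$. Two bounded operators with the same matrix with respect to an orthonormal basis agree, so $V = CUC \in \mathfrak{O}_c(U)$.

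The only nontrivial step is the existence of a $C$-real orthonormal basis. I would prove it directly as follows. The set $\mathcal{H}_C = \{\vec{x} : C\vec{x} = \vec{x}\}$ is a closed real subspace on which $\langle \cdot, \cdot \rangle$ is real-valued, since $\langle \vec{x}, \vec{y}\rangle = \langle C\vec{y}, C\vec{x}\rangle = \langle \vec{y}, \vec{x}\rangle = \overline{\langle \vec{x}, \vec{y}\rangle}$. Every $\vec{x} \in \mathcal{H}$ decomposes as $\vec{x} = \vec{a} + i\vec{b}$ with $\vec{a} = \tfrac12(\vec{x} + C\vec{x})$ and $\vec{b} = \tfrac{1}{2i}(\vec{x} - C\vec{x})$, both in $\mathcal{H}_C$. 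Take a real orthonormal basis of the separable real Hilbert space $\mathcal{H}_C$, for instance by Gram--Schmidt applied to a countable dense set, which keeps every vector in $\mathcal{H}_C$ because the inner products are real. This family is orthonormal in $\mathcal{H}$, and its complex span contains $\mathcal{H}_C + i\mathcal{H}_C = \mathcal{H}$, so it is complete. Alternatively, I could cite this as a known fact from the conjugation literature referenced in the introduction.
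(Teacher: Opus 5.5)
Your proposal is correct and follows the paper's proof essentially step for step. The forward direction uses a $C$-real orthonormal basis, both directions rest on the same entry computation $\langle CUC\vec{u}_j,\vec{u}_i\rangle=\overline{\langle U\vec{u}_j,\vec{u}_i\rangle}$, and the converse uses the conjugation that fixes the given basis. The one difference is that you prove the existence of a $C$-real orthonormal basis yourself, via the real subspace $\{\vec{x}: C\vec{x}=\vec{x}\}$ and real Gram--Schmidt, whereas the paper cites it from the conjugation literature; your argument for that step is sound.
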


\begin{proof}
Suppose that $V = C U C$ for some conjugation $C$. A  general fact about conjugations from \cite[Lemma 1]{MR2187654}  says  there exists a {\em $C$-real} orthonormal basis  $\mathcal{E} = (\vec{u}_j)_{j \geq 1}$ for $\mathcal{H}$ in that $C \vec{u}_j = \vec{u}_j$ for all $j \geq 1$. With this choice of $C$-real orthonormal basis $\mathcal{E}$, we have
\begin{align*}
([V]_{\mathcal{E}})_{k, j} & =  \langle V \vec{u}_j, \vec{u}_k\rangle\\
&= \langle C U C \vec{u}_j, \vec{u}_k\rangle\\
& = \langle C U \vec{u}_j, \vec{u}_k\rangle\\
& = \langle C \vec{u}_k, U \vec{u}_j\rangle\\
& = \langle \vec{u}_k, U \vec{u}_j\rangle\\
& = \overline{\langle U \vec{u}_j, \vec{u}_k\rangle}\\
& = \overline{([U]_{\mathcal{E}})_{k, j}}.
\end{align*}

Conversely, suppose that $[V]_{\mathcal{E}} = \overline{[U]_{\mathcal{E}}}$ for some orthonormal basis $\mathcal{E} = (\vec{u}_j)_{j \geq 1}$, in other words, $\langle V \vec{u}_j, \vec{u}_k\rangle = \langle \vec{u}_k, U \vec{u}_j\rangle$ for all $j, k \geq 1$. Define a conjugation $C$ on $\mathcal{H}$ by first defining it on the orthonormal basis elements by $C \vec{u}_j = \vec{u}_j$ and then extending it antilinearly to all of $\mathcal{H}$. By this we mean 
\begin{equation}\label{Creeeeeeel}
C\Big(\sum_{j \geq 1} a_j \vec{u}_j \Big) := \sum_{j \geq 1} \bar a_j \vec{u}_j.
\end{equation}
We are essentially defining a conjugation $C$ so that $(\vec{u}_j)_{j \geq 1}$ is a  $C$-real basis. Then, reversing the steps of  the calculation in the previous part of the argument shows that $\langle C U C \vec{u}_{j}, \vec{u}_k\rangle = \langle V  \vec{u}_{j}, \vec{u}_k\rangle$ for all $j, k \geq 1$ and thus  $V = C UC$.
\end{proof}


When a unitary operator $U$ is representable by a diagonal matrix, in other words, $U$ has an orthonormal basis of eigenvectors, we can obtain a specific and rich class of  members of its conjugate orbit. Below we use the standard tensor notation $\vec{a} \otimes \vec{b}$, where $\vec{a}, \vec{b}$ are vectors from a Hilbert space $\mathcal{H}$, to denote the rank one operator on $\mathcal{H}$ defined by 
$$(\vec{a} \otimes \vec{b}) \vec{x} = \langle \vec{x}, \vec{b}\rangle \vec{a}, \quad \vec{x} \in \mathcal{H}.$$
Observe that $(\vec{a} \otimes \vec{b})^{*} = \vec{b} \otimes \vec{a}$ and if $\vec{a}$ and $\vec{b}$ are unit vectors, then $(\vec{a} \otimes \vec{b})^{*} ( \vec{a} \otimes \vec{b})$ is the orthogonal projection onto $\C \vec{b}$ while 
$(\vec{a} \otimes \vec{b})(\vec{a} \otimes \vec{b})^{*}$ is the orthogonal projection onto $\C \vec{a}$. This says that if $(\vec{u}_j)_{j \geq 1}$ is an orthonormal basis for $\mathcal{H}$ and $(\xi_j)_{j \geq 1}$ are unimodular constants, then 
$$\sum_{j \geq 1} \xi_j (\vec{u}_{j} \otimes \vec{u}_j)$$ defines a unitary operator on $\mathcal{H}$. 

\begin{Corollary}\label{C27}
Suppose that $(\vec{u}_{j})_{j \geq 1}$ is an orthonormal basis for $\mathcal{H}$, $(\xi_j)_{j \geq 1}$ is a sequence of unimodular constants, and $U$ is the unitary operator on $\mathcal{H}$ defined by
$$U  := \sum_{j \geq 1} \xi_{j}  (\vec{u}_j \otimes  \vec{u}_{j}).$$
For any bijection $\sigma: \N \to \N$ with $\sigma \circ \sigma = \operatorname{id}$, the unitary operator $U_{\sigma}$ on $\mathcal{H}$ defined by
$$U_{\sigma} :=  \sum_{j \geq 1} \bar \xi_{\sigma(j)} (\vec{u}_j \otimes \vec{u}_{j})$$ belongs to $\mathfrak{O}_c(U)$.
\end{Corollary}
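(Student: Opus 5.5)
The plan is to write down an explicit conjugation $C$ with $CUC = U_\sigma$. Since $\sigma$ is an involution on $\N$, it splits $\N$ into fixed points and two-element orbits $\{j, \sigma(j)\}$. Define $C$ on the basis by
$$C \vec{u}_j := \vec{u}_{\sigma(j)}, \quad j \geq 1,$$
and extend antilinearly, so that $C\big(\sum_j a_j \vec{u}_j\big) = \sum_j \bar a_j \vec{u}_{\sigma(j)}$.

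First we check that $C$ is a conjugation. It is antilinear by construction. It is isometric because $\sigma$ is a bijection, so $(\vec{u}_{\sigma(j)})_{j \geq 1}$ is again an orthonormal basis and Parseval's identity applies to $(\bar a_j)$. It is involutive because
$$C^2 \sum_j a_j \vec{u}_j = C \sum_j \bar a_j \vec{u}_{\sigma(j)} = \sum_j a_j \vec{u}_{\sigma(\sigma(j))} = \sum_j a_j \vec{u}_j,$$
using $\sigma\circ\sigma = \operatorname{id}$. Note that this involution condition is exactly where the hypothesis on $\sigma$ is needed.

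Next we compute $CUC$ on basis vectors:
$$CUC\vec{u}_j = CU\vec{u}_{\sigma(j)} = C\big(\xi_{\sigma(j)} \vec{u}_{\sigma(j)}\big) = \bar\xi_{\sigma(j)} \vec{u}_{\sigma(\sigma(j))} = \bar\xi_{\sigma(j)} \vec{u}_j = U_\sigma \vec{u}_j.$$
Both $CUC$ and $U_\sigma$ are bounded linear operators, and they agree on an orthonormal basis. Hence $CUC = U_\sigma$, and so $U_\sigma \in \mathfrak{O}_c(U)$.

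There is no real obstacle here. The only point requiring care is verifying that the antilinear extension is well defined and isometric on all of $\mathcal{H}$, not just on finite sums, and this follows from the convergence of $\sum |a_j|^2$.

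Alternatively, one could invoke Proposition \ref{sdfhjsd9f9ds9dsf9sdf9s9df} with $C_0\vec{u}_j = \vec{u}_j$ and the permutation unitary $W\vec{u}_j = \vec{u}_{\sigma(j)}$, which satisfies $C_0WC_0 = W = W^{*}$. The direct construction above is shorter, so that is the route we take.
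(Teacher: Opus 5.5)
Your proof is correct and is essentially the paper's own argument: the paper defines the same conjugation $C_{\sigma}\vec{u}_j = \vec{u}_{\sigma(j)}$, extended antilinearly, uses $\sigma\circ\sigma = \operatorname{id}$ to get $C_{\sigma}^2 = I$, and then computes $C_{\sigma} U C_{\sigma}\vec{u}_j = \bar\xi_{\sigma(j)}\vec{u}_j$ exactly as you do. Your check that the antilinear extension is isometric, and your alternative route through Proposition \ref{sdfhjsd9f9ds9dsf9sdf9s9df} with the permutation unitary $W$, are both valid additions.
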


\begin{proof}
Define  
\begin{equation}\label{66tt55RFDDS}
C_{\sigma} \vec{u}_j = \vec{u}_{\sigma(j)} \; \mbox{for all $j \geq 1$}
\end{equation}
 and extend $C_{\sigma}$ as in \eqref{Creeeeeeel}  to be an antilinear isometry on $\mathcal{H}$. Since $\sigma \circ \sigma = \operatorname{id}$, we see that $C_{\sigma}^{2} = I$ and thus $C_{\sigma}$ is a conjugation. Now observe that for each $j \geq 1$ we have
\begin{align*}
C_{\sigma} U C_{\sigma} \vec{u}_j & = C_{\sigma} U \vec{u}_{\sigma(j)}\\
& = C_{\sigma}( \xi_{\sigma(j)} \vec{u}_{\sigma(j)})\\
& = \bar \xi_{\sigma(j)} \vec{u}_{j}\\
& = U_{\sigma} \vec{u}_j,
\end{align*}
which shows that $U_{\sigma} \in \mathfrak{O}_c(U)$.
\end{proof}

In the previous corollary, notice that when $\sigma = \operatorname{id}$, then $U_{\sigma} = U^{*}$ belongs to the conjugate orbit of $U$, which confirms Proposition \ref{SdfsdfdsfcvvvvVV}. One can formulate a finite dimensional version of the previous proposition except that $\sigma$ will be an order two bijection of $\{1, 2, \ldots, n\}$. We record this with the following example. 

\begin{Example}\label{Ex2too}
If
$U = \operatorname{diag}(\xi_1, \xi_2, \ldots, \xi_n)$ is a diagonal unitary matrix and $\sigma \in S_{n}$ (the symmetric group on $\{1, 2, \ldots, n\}$) with order two, then the unitary matrix 
$$U_{\sigma} := \operatorname{diag}(\bar \xi_{\sigma(1)}, \bar \xi_{\sigma(2)}, \ldots,  \bar \xi_{\sigma(n)})$$ belongs to $\mathfrak{O}_c(U)$. We will revisit this example below in Corollary \ref{sdfugsdfg99991008}.
Though one might be initially tempted to believe that every member of the conjugate orbit of a diagonal matrix will also be diagonal, we will show in Example \ref{twobytwo} that this is rarely  the case. 
We will explore a generalization as to which diagonal matrices belong to the conjugate orbit in Theorem \ref{6771109999}. 
\end{Example}

\begin{Example}\label{nknjdhfkhfghHJHJHJ}
Let
\begin{equation}\label{FTFTFT}
 (\mathcal{F}f)(x) = \frac{1}{\sqrt{2 \pi}} \int_{-\infty}^{\infty} f(t) e^{-ixt} dt,
 \end{equation}
  denote the classical Fourier--Plancherel transform on $L^2(\R)$ (defined initially on $L^1(\R) \cap L^2(\R)$ by the above integral and extended to be a unitary operator on $L^2(\R)$). 
  It is known that $\mathcal{F}$ has the spectral properties  $\sigma(\mathcal{F}) = \sigma_{p}(\mathcal{F}) = \{1, -1, i, -i\}$ and 
  \begin{equation}\label{Fteigenv-aces}
 \mathcal{F} H_n = (-i)^n H_n,
 \end{equation}
where $H_n$ is the $n$th Hermite function, i.e., $$H_{n}(x) = c_n e^{-\frac{x^2}{2}} h_{n}(x), \; \; n \geq 0,$$
$c_n$ is a normalizing constant so that $\|H_n\|_{L^2(\R)}= 1$, and $h_n$ is the $n$-th Hermite polynomial.
 Moreover, $(H_{n})_{n \geq 0}$ forms an orthonormal basis for $L^2(\R)$ \cite[Ch.11]{MR4545809}.
 Thus, by our previous discussion, we can write
 \begin{equation}\label{diagonalizeFFF}
 \mathcal{F} = \sum_{n = 0}^{\infty} (-i)^n (H_{n} \otimes H_n).
 \end{equation}
Corollary \ref{C27} applied to any bijection $\sigma: \N_0 \to \N_0$ with $\sigma \circ \sigma = \operatorname{id}$ says that the  unitary operator $\mathcal{F}_{\sigma}$ on $L^2(\R)$ defined by
 $$\mathcal{F}_{\sigma}  := \sum_{n = 0}^{\infty} i^{\sigma(n)} (H_{n}\otimes H_{n})$$ belongs to $\mathfrak{O}_c(\mathcal{F})$. Observe that if $\sigma = \operatorname{id}$, then $\mathcal{F}_{\sigma} = \mathcal{F}^{*}$ belongs to the conjugate orbit of $\mathcal{F}$, which is expected by Proposition \ref{SdfsdfdsfcvvvvVV}. If $\sigma: \N_0 \to \N_0$ is defined by $\sigma(2n) = 2n$, $\sigma(1) = 3, \sigma(3) = 1, \sigma(5) = 7, \sigma(7) = 5$ and so on, we see that $i^{\sigma(n)} = (-i)^n$ and thus $\mathcal{F}_{\sigma} = \mathcal{F}$ belongs to the conjugate orbit of $\mathcal{F}$. Note that the Fourier--Plancherel transform is unitarily equivalent to its adjoint. See the paragraph following Proposition \ref{e0roiekfgdFF}.
\end{Example}

\begin{Example}\label{ldfgjdfjg888uuUU}
Let
 \begin{equation}\label{Hilberttransformmmm}
 (\mathscr{H} g)(x) = \frac{1}{\pi}  \operatorname{PV} \int_{-\infty}^{\infty} \frac{g(t)}{x - t} dt,
 \end{equation}
  denote the Hilbert transform on $L^2(\R)$. It is known that $\mathscr{H}$ is unitary on $L^2(\R)$ (in fact $\mathscr{H}^{*} = - \mathscr{H}$). 
Moreover, $\mathscr{H}$ is unitarily equivalent to its adjoint via the intertwining  unitary operator on $L^2(\R)$ defined by  $f(x) \mapsto f(-x)$ and  $\mathscr{H}$ has the spectral properties $\sigma(\mathscr{H}) = \sigma_{p}(\mathscr{H}) = \{i, -i\}$ with 
$$\mathscr{H} f_n = i f_n, \;  \mbox{where} \; 
f_n(x) = \frac{1}{\sqrt{\pi}} \frac{(x + i)^{-n - 1}}{(x - i)^{-n}}, \quad n \leq - 1,$$
and 
$$\mathscr{H} f_n = - i f_n, \; \mbox{where} \; 
f_{n}(x) = \frac{1}{\sqrt{\pi}} \frac{(x - i)^{n}}{(x + i)^{n + 1}}, \quad n \geq 0.$$
It is known  $(f_n)_{n \in \Z}$ forms an orthonormal basis for $L^2(\R)$.
See \cite[Ch. 12]{MR4545809} for the above Hilbert transform details. Thus,
\begin{equation}\label{diagonalizeH}
\mathscr{H} = \sum_{n < 0} i (f_n \otimes f_n) + \sum_{n \geq 0} (-i) (f_n \otimes f_n).
\end{equation}
In other words, with respect to the ordered  orthonormal basis $(f_{n})_{n = - \infty}^{\infty}$, $\mathscr{H}$ is represented by the doubly infinite diagonal matrix
$$\operatorname{diag}(\ldots, i, i, i, \boxed{-i}, -i, -i, \ldots),$$
where the boxed term above corresponds to the $0$\textsuperscript{th} position. 
For any bijection $\sigma: \Z \to \Z$ for which $\sigma \circ \sigma = \operatorname{id}$,  the unitary operator $\mathscr{H}_{\sigma}$ on $L^2(\R)$ defined by the permutation $\sigma$ of the entries of 
$$\operatorname{diag}(\ldots, -i, -i, -i, \boxed{i}, i, i, \ldots)$$
belongs to $\mathfrak{O}_c(\mathscr{H})$. Certainly the identity permutation will yield $\mathscr{H}^{*} \in \mathfrak{O}_c(\mathscr{H})$ (as to be expected from Proposition \ref{e0roiekfgdFF}). The permutation $\sigma$ on $\Z$ defined by $-1 \leftrightarrow 0, -2 \leftrightarrow 1, -3 \leftrightarrow 2, \ldots, -n  \leftrightarrow n - 1, \ldots$, will yield $\mathscr{H} \in \mathfrak{O}_c(\mathscr{H})$. See the discussion following Proposition \ref{e0roiekfgdFF}.
\end{Example}

\section{Alternate representations of members of the orbit}

To get an idea of  the ``size'' of $\mathfrak{O}_c(U)$, it is useful to discuss when
$C_1 U C _1 = C_2 U C_2.$
In other words, when do two conjugations $C_1$ and $C_2$ yield the same member of $\mathfrak{O}_c(U)$? For a unitary operator $U$ on $\mathcal{H}$, recall that the {\em commutant} of $U$, denoted by $\{U\}'$, is the collection of  all bounded linear operators $A$ on $\mathcal{H}$ for which $AU = UA$ (see \cite{MR1721402} for general facts about the commutant). 

\begin{Proposition}\label{09876tygbhn}
For a fixed unitary operator $U$ and conjugations $C_1, C_2$ on $\mathcal{H}$, the following are equivalent.
\begin{enumerate}
\item $C_1 U C_1 = C_2 U C_2$.
\item $C_1 C_2$ is a unitary operator in the commutant of $U$.
\end{enumerate}
\end{Proposition}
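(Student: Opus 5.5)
The plan is a direct algebraic manipulation, using only that $C_1^2 = C_2^2 = I$ and that a product of two conjugations is unitary. First note that, independently of either condition, $C_1 C_2$ is always a unitary operator. It is linear, isometric and onto, as already observed before \eqref{sdfkjgfkgdsfvkjkjknknk}. So the real content of (b) is that $C_1 C_2$ commutes with $U$, and the task reduces to showing that (a) is equivalent to $(C_1C_2)U = U(C_1C_2)$.

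For $(a) \Longrightarrow (b)$, I would start from $C_1 U C_1 = C_2 U C_2$. Multiplying on the left by $C_1$ and on the right by $C_2$ (composition of maps, so no linearity issue arises) and using $C_1^2 = C_2^2 = I$ gives
\begin{equation*}
U C_1 C_2 = C_1 C_2 U.
\end{equation*}
Thus $C_1 C_2 \in \{U\}'$.

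For $(b) \Longrightarrow (a)$, I would reverse this step. From $C_1 C_2 U = U C_1 C_2$, multiply on the left by $C_1$ and on the right by $C_2$ to get $C_2 U C_2 = C_1 U C_1$.

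There is no real obstacle here. The only point needing care is that every manipulation is a composition of maps and never pulls scalars through the antilinear operators, so antilinearity does not interfere.
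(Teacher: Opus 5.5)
Your proposal is correct and takes the same approach as the paper: compose both sides of the identity with $C_1$ on the left and $C_2$ on the right, use $C_1^2 = C_2^2 = I$ to get $(C_1C_2)U = U(C_1C_2)$, and reverse the step for the converse. You also note, as the paper does, that $C_1C_2$ is automatically unitary, so condition (b) amounts to $C_1C_2$ commuting with $U$.
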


\begin{proof}
If (a) holds then, manipulating the identity $C_1 U C_1 = C_2 U C_2$, gives us 
$(C_1 C_2) U = U (C_1 C_2).$ Since $C_1 C_2$ is unitary, we see that  (b) holds. Conversely, suppose (b) holds in that $C_1 C_2$ commutes with $U$. A similar working of the terms in  $(C_1 C_2) U = U (C_1 C_2)$ yields (a).
\end{proof}

\begin{Remark}\label{commuttannt}
The previous proposition gives us a way of understanding the richness of the conjugate orbit of a particular unitary operator. For example, if $U$ is a unitary operator on the sequence space $\ell^2 = \ell^2(\N)$ then, with respect to the standard orthonormal basis $(\vec{e}_j)_{j \geq 1}$, every conjugation  on $\ell^2$ can be written as $C = V J$, where $V$ is an infinite unitary matrix with $V^{t} = V$ and $J$ is the standard conjugation on $\ell^2$ defined by  $J \vec{x} = \overline{\vec{x}}$ (see below). Here $V$ is an infinite unitary matrix that defines an operator on $\ell^2$ (viewed as column vectors) by $\vec{x} \mapsto V \vec{x}$ and $V^{t}$ denotes its transpose.  With $C_{1} = V_{1} J$ and $C_2 = V_2 J$ we have
$C_{1} C_2 = V_1 J V_2 J = V_{1} \overline{V_2} = V_{1} V_{2}^{*}.$ Here, recall that $\overline{V}$ is the matrix $V$ with all of its elements conjugated (and thus $J V J = \overline{V}$) and $V^{*}$ denotes the conjugate transpose of $V$.
Thus, $C_1 U C_1 = C_2 U C_2$ if and only if $V_{1} V_{2}^{*}$ is a unitary operator in  the commutant of $U$.
\end{Remark}

\begin{Example}\label{lksjnfkdghdu88y8Y*YYYY}
Keep the notation as in  Corollary \ref{C27}, where 
$$U = \sum_{j \geq 1} \xi_j (\vec{u}_j \otimes \vec{u}_j).$$
 As in Corollary \ref{C27}, let $\sigma_1, \sigma _2$ be two bijections of $\N$ with $\sigma_{1} \circ \sigma_{1} =  \sigma_2 \circ \sigma_2 = \operatorname{id}$ and $C_{\sigma_1}, C_{\sigma_2}$ the corresponding conjugations on $\mathcal{H}$ which permute each basis vector $\vec{u}_j$ as in \eqref{66tt55RFDDS} and then extended antilinearly to all of $\mathcal{H}$ via \eqref{Creeeeeeel}. The proof of Corollary \ref{C27} says that 
 $U_{\sigma_1} = C_{\sigma_1} U C_{\sigma_1}$ and $U_{\sigma_2} = C_{\sigma_2} U C_{\sigma_2}$ and so $$U_{\sigma_1} = U_{\sigma_2} \; \mbox{when} \; C_{\sigma_1} C_{\sigma_2} U = U C_{\sigma_1} C_{\sigma_2}.$$ Observe that 
 $$C_{\sigma_1} C_{\sigma_2} U \vec{u}_j = \xi_{j} \vec{u}_{\sigma_1 (\sigma_2(j))} \; \; \mbox{and} \; \; 
 U C_{\sigma_1} C_{\sigma_2} \vec{u}_j = \xi_{\sigma_1(\sigma_{2}(j))}  \vec{u}_{\sigma_1 (\sigma_2(j))}.$$
 This holds precisely when 
 $\xi_{\sigma_1 \circ \sigma_2 (j)} = \xi_{j}$ for all $j \geq 1$. When  all of the $\xi_j$ are {\em distinct}, we see that $U_{\sigma_1}$ and $U_{\sigma_2}$ are distinct elements of $\mathfrak{O}_c(U)$ if and only if $\sigma_{1}(j) \not = \sigma_{2}(j)$ for at least one $j$.
\end{Example}

\begin{Example}
Let us consider the Fourier--Plancherel transform $\mathcal{F}$ as in Example \ref{nknjdhfkhfghHJHJHJ}. For two bijections $\sigma_1, \sigma _2$  of $\N_0$ with $\sigma_{1} \circ \sigma_1 = \sigma_{2} \circ \sigma_{2} = \operatorname{id}$, the two unitary operators
$\mathcal{F}_{\sigma_1}$  and  $\mathcal{F}_{\sigma_2}$ are distinct elements of the orbit of $\mathcal{F}$ if and only
$(-i)^{\sigma_1 \circ \sigma_2(j)} \not = (-i)^j$ for at least one $j$. In other words, $\sigma_1 \circ \sigma_2(j) \not = j$ (modulo $4$) for at least one $j$. A similar type of result holds for the Hilbert transform from Example \ref{ldfgjdfjg888uuUU}.
\end{Example}

\begin{Example}
Suppose that $(U f)(\xi) = \xi f(\xi)$ is the bilateral shift on $L^2(m)$. It is well  known \cite[p.~183]{MR4545809} that the commutant of $U$ consists of all the multiplication operators $M_{\phi} f = \phi f$ on $L^2(m)$, where $\phi \in L^{\infty}(m)$. Moreover, since $M_{\phi}^{*} = M_{\bar{\phi}}$, we see that $M_{\phi}$ is unitary precisely when $\phi$ is unimodular almost everywhere on $\T$. Thus, for conjugations $C_1, C_2$ on $L^2(m)$, we have $C_1 U C_1 = C_2 U C_2$ if and only if $C_1 C_2 = M_{\phi}$ for some unimodular $\phi$. We will say more about this in Remark \ref{QPLAKSJFHG}. As a specific example, observe that if $u$ and $v$ are unimodular on $\T$, then $C_{u} f = u \bar{f}$ and $C_{v} f = v \bar{f}$ define conjugations on $L^2(m)$ with $C_{u} U C_{u} = U^{*}$ and $C_{v} U C_{v} = U^{*}$. A short computation reveals  that $C_{u} C_{v} = M_{u \overline{v}}$. We have already seen these conjugations in Remark \ref{nNNnniiinnnndddD} and   will see them again in Example \ref{examplecalculus}.
\end{Example}

\begin{Example}\label{MxionL63}
Suppose that $L^2(\mu)$ is any Lebesgue space, where $\mu$ is a finite positive measure on $\T$. From \cite[Prop.~7.2]{MR4077555}, one can show that the mapping
$$C f = \bar f \circ \alpha,$$ where $\alpha\colon \T\to \T$ is $\mu$-measurable, defines a conjugation if and only if 
\begin{equation}\label{measpres}
\alpha \circ \alpha = \operatorname{id} \; \; \mbox{and $\mu$ is measure preserving}.
\end{equation}
The papers \cite{MR2618485, MR355679, MR492057} contain a discussion of isometric composition operators on general Lebesgue spaces. 
Define 
$M_{\xi}$ on $L^2(\mu)$ by 
$$M_{\xi} f = \xi f.$$
 Let $C_{\alpha_1},C_{\alpha_2}$ be the conjugations on $ L^2(\mu)$ given by $C_{\alpha_j} f = \bar{f} \circ \alpha_j$ with $\alpha_1,\alpha_2\colon \T\to\T$ being $\mu$-measure preserving and convolutive mappinngs as in \eqref{measpres}.
If  $C_{\alpha_1}M_{\xi} C_{\alpha_1}=C_{\alpha_2}M_{\xi} C_{\alpha_2}$ then $\alpha_1=\alpha_2$, $\mu$ almost everywhere. Indeed,
$C_{\alpha_1}M_{\xi} C_{\alpha_1}=C_{\alpha_2}M_{\xi} C_{\alpha_2}$ if and only if  $C_{\alpha_1} C_{\alpha_2}\in\{M_{\xi}\}^\prime$, the commutant of $M_{\xi}$.
On the other hand, as discussed in the previous example,  $\{M_{\xi}\}^\prime=\{M_\varphi\colon \varphi\in L^\infty(\mu)\}$.
Thus, $C_{\alpha_1} C_{\alpha_2}=M_{\phi}$ with $\phi$ unimodular $\mu$ almost everywhere.
Note that
$$
C_{\alpha_1} C_{\alpha_2} f = f \circ (\alpha_2 \circ \alpha_1)  = A_{\alpha_2\circ\alpha_1} f,
$$
where $A_{\alpha_2\circ\alpha_1}$ denotes the composition operator on $L^2(\mu)$ with symbol $\alpha_{2} \circ \alpha_{1}$. 
Letting $f = \chi = \chi_{\T}$ (the constant function equal to one on $\T$) in the above, we have
$$\varphi(\xi) = \phi(\xi) \chi(\xi)= \chi \circ \alpha_2\circ \alpha_1 (\xi).$$
Since  $\alpha_2\circ \alpha_1$ is a bijection, we see that $\varphi = 1$ $\mu$ almost everywhere.
Towards a contradiction, assume now that  
$$0<\mu\{ \xi \in \T:  \alpha_1(\xi)\not=\alpha_2(\xi)\}=\mu\{\xi \in \T:  (\alpha_2\circ\alpha_1)(\xi)\not=\xi\}.$$
This would contradict the fact that  $A_{\alpha_2\circ\alpha_1} e = e$, where $e$ denotes the identity function $e(\xi) = \xi$ on $\T$. Putting this all together, we have shown the following: Let $U = M_{\xi}$ on $L^2(\mu)$, where $\mu$ is a positive measure on $\T$. For any $\mu$-measurable $\alpha\colon\T\to\T$ as in \eqref{measpres}, define $C_\alpha$  on $L^2(\mu)$ by $C_{\alpha} f = \bar f\circ \alpha$. Then $C_\alpha$ is a conjugation; $C_{e}M_{\xi} C_{e}= M_{\xi}^{*}$; and each $\alpha$ (defined uniquely up to $\mu$ almost everywhere) gives a distinct  element  $C_\alpha M_{\xi} C_\alpha$ of $\mathfrak{O}_c(M_{\xi})$.
It is worth noticing that diagonal unitary operators on $\ell^2$ with distinct eigenvalues fulfills the hypothesis of the above discussion and Example \ref{lksjnfkdghdu88y8Y*YYYY} can be explored with this set up.
\end{Example}

In Chapter \ref{S-Five} we will continue the discussion from the previous example when $\mu$ is normalized Lebesgue measure $m$ on $\T$ and $M_{\xi}$ above will be the classical bilateral shift on $L^2(m)$. 

\chapter{Unitary Matrices}\label{S-Four}

This chapter focuses on the conjugate orbit of a unitary matrix. Let $\mathscr{U}_{n}$ denote the set of all $n \times n$ complex unitary matrices $U$ ($U^{*} U = U U^{*} = I$) which we view as unitary operators on $\C^n$ (as columns)  in the usual way by $\vec{x} \mapsto U \vec{x}$, $\vec{x} \in \C^n$.

\section{Unitary matrices and conjugations}

Let $J$ denote the conjugation on $\C^n$ given by $J \vec{x} = \overline{\vec{x}}$, that is,
$$J \begin{bmatrix} x_1\\ x_2\\ \vdots\\ x_n \end{bmatrix} = \begin{bmatrix} \bar x_1\\ \bar x_2\\ \vdots\\ \bar x_n \end{bmatrix}.$$
For $U \in \mathscr{U}_{n}$ one  can check that
\begin{equation}\label{youuuu}
J U J = \overline{U},
\end{equation}
where, as a reminder, $\overline{U}$ is the matrix consisting of the complex conjugates of the entries of $U$.
 A result of Garcia and Tener \cite[Lemma 3.2]{MR2966041} says that {\em any} conjugation $C$ on $\C^n$ can be written as $C = V J$, where $V \in \mathscr{U}_{n}$ for which $V = V^{t}$ (the transpose of $V$).
 Note that since $V$ is unitary we have 
 \begin{equation}\label{99hhHHaa1212344}
V^{t} = V \iff V \overline{V} = I \iff  \overline{V} V = I.
\end{equation}
For completeness, we include its short proof. 
 
 \begin{Proposition}\label{GTenner}
$C$ is a conjugation on $\C^n$ if and only if $C = V J$, where $V \in \mathscr{U}_n$ with $V^{t} = V$ and $J$ is the conjugation on $\C^n$ defined by $J \vec{x} = \overline{\vec{x}}$. 
 \end{Proposition}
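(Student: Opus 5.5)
The plan is to prove both directions by reducing everything to the linear map $V := CJ$, using only $J^2 = I$, the antilinearity of $C$ and $J$, and the equivalence \eqref{99hhHHaa1212344}.

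For the forward direction, suppose $C$ is a conjugation on $\C^n$ and set $V := CJ$.
\begin{enumerate}
\item $V$ is linear, because it is the composition of two antilinear maps.
\item $V$ is isometric, because both $C$ and $J$ are isometric; on $\C^n$ this makes $V$ unitary, so $V \in \mathscr{U}_n$.
\item Since $J^2 = I$, we have $C = (CJ)J = VJ$.
\item For the symmetry of $V$, use $C^2 = I$: $I = C^2 = VJVJ = V\overline{V}$, using \eqref{youuuu}, which gives $JVJ = \overline{V}$.
\item By \eqref{99hhHHaa1212344}, the relation $V\overline{V} = I$ is equivalent to $V^t = V$.
\end{enumerate}

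For the converse, suppose $V \in \mathscr{U}_n$ with $V^t = V$, and set $C := VJ$.
\begin{enumerate}
\item $C$ is antilinear, since $V$ is linear and $J$ is antilinear.
\item $C$ is isometric, since both factors are isometric.
\item $C$ is involutive: $C^2 = VJVJ = V\overline{V} = I$, again by \eqref{youuuu} and \eqref{99hhHHaa1212344}.
\end{enumerate}
Hence $C$ is a conjugation.

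The only substantive step is justifying \eqref{99hhHHaa1212344}, should it need proof. Since $V$ is unitary, $V^{-1} = V^* = \overline{V}^t$. Therefore $V\overline{V} = I$ holds if and only if $\overline{V} = V^{-1} = \overline{V^t}$, which holds if and only if $V = V^t$. The equivalence with $\overline{V}V = I$ follows because a one-sided inverse of a square matrix is two-sided.

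Everything else is routine bookkeeping of linearity versus antilinearity. The one point to be careful about is that $JVJ$ acts as the matrix $\overline{V}$; this holds because $J(V J\vec{x}) = \overline{V\overline{\vec{x}}} = \overline{V}\vec{x}$.
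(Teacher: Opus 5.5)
Your proof is correct and matches the paper's argument: set $V = CJ$, observe it is unitary, and use $V\overline{V} = VJVJ = C^2 = I$ together with the equivalence $V^t = V \iff V\overline{V} = I$. The converse is the same computation $C^2 = V\overline{V} = I$. Your justification of that equivalence via $V^{-1} = V^{*} = \overline{V^t}$ is a step the paper only cites, so spelling it out is a useful addition.
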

 
 \begin{proof}
 If $C$ is a conjugation on $\C^n$ then $V = C J$ is a unitary operator on $\C^n$ which we think of as an element of $\mathscr{U}_n$. Moreover, 
 $$V \overline{V} = V JVJ = C^2 = I.$$ From \eqref{99hhHHaa1212344} it follows that $V = V^{t}$. Conversely, suppose that $V \in \mathscr{U}_n$ and $V^{t} = V$, equivalently, again by \eqref{99hhHHaa1212344}, $V \overline{V} = I$. Then $C = V J$ is antilinear and isometric on $\C^n$. Moreover, 
 $$C^2 = V J VJ = V \overline{V} = I$$ and so $C$ is a conjugation.
 \end{proof}
 
  A large class of such self transpose unitary matrices $V$ can be created by using the Householder matrices $V = I - 2 \vec{v} \otimes \vec{v}$, where $\vec{v}$ is any unit vector in $\C^n$ (or diagonal blocks of such matrices). 
Proposition \ref{GTenner} extends to any (singly or doubly) infinite  unitary matrix $U$ acting on $\ell^2(\N_{0})$ or $\ell^2(\Z)$ (see Proposition \ref{OShfgfdsfa} below with the same proof as the previous proposition).

\begin{Proposition}\label{dfgdfgkkKKK12}
For fixed $U \in \mathscr{U}_n$ we have
 $$\mathfrak{O}_c(U) = \{V \overline{U} V^{*}: \mbox{$V \in \mathscr{U}_n$ and $V^{t} = V$}\}.$$
\end{Proposition}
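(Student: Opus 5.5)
The plan is a direct computation based on Proposition \ref{GTenner}, which writes every conjugation on $\C^n$ as $C = VJ$ with $V \in \mathscr{U}_n$ and $V^{t} = V$. Combining this with the identity $JUJ = \overline{U}$ from \eqref{youuuu} should show that every $CUC$ has the stated form. The reverse inclusion then follows by reading the same identities backwards.

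First, let $W \in \mathfrak{O}_c(U)$, so $W = CUC$ for some conjugation $C$ on $\C^n$. By Proposition \ref{GTenner}, $C = VJ$ with $V \in \mathscr{U}_n$ and $V^{t} = V$. Then
$$CUC = VJUVJ = V(JUJ)(JVJ) = V\overline{U}\,\overline{V},$$
where I use $J^2 = I$ and \eqref{youuuu} applied to both $U$ and $V$. By \eqref{99hhHHaa1212344}, $V^{t} = V$ is equivalent to $V\overline{V} = I$, so $\overline{V} = V^{-1} = V^{*}$. Hence $W = V\overline{U}V^{*}$. An alternative route is to note that $V^{*} = \overline{V^{t}} = \overline{V}$ directly.

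Conversely, let $V \in \mathscr{U}_n$ with $V^{t} = V$. Proposition \ref{GTenner} shows that $C := VJ$ is a conjugation. The computation above, read in reverse, gives $CUC = V\overline{U}\,\overline{V} = V\overline{U}V^{*}$, so this matrix lies in $\mathfrak{O}_c(U)$.

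There is no real obstacle here. The only point needing care is to apply \eqref{youuuu} to an arbitrary matrix $V$, namely $JVJ = \overline{V}$; this holds for any matrix, not just unitary ones, by checking it on vectors. The other point is the identification $\overline{V} = V^{*}$, which is exactly where the self-transpose hypothesis on $V$ is used.
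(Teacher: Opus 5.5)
Your proposal is correct and follows the same route as the paper: write $C = VJ$ using Proposition \ref{GTenner}, insert $J^2 = I$ to obtain $V(JUJ)(JVJ) = V\overline{U}\,\overline{V}$, and then use the self-transpose condition to replace $\overline{V}$ by $V^{*}$. For the reverse inclusion you read the same identities backwards, exactly as the paper's chain of set equalities does.
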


\begin{proof}
Using Proposition \ref{GTenner}, observe that 
\begin{align*}
\mathfrak{O}_c(U) & = \{C U C: \mbox{$C$ is a conjugation}\}\\
& = \{(VJ) U (VJ): \mbox{$V \in \mathscr{U}_n$  and $V^{t} = V$}\}\\
& =  \{(VJ) U J J (VJ): \mbox{$V \in \mathscr{U}_n$  and $V^{t} = V$}\}\\
& =  \{V(J U J) (J VJ): \mbox{$V \in \mathscr{U}_n$  and $V^{t} = V$}\}\\
& =  \{V \overline{U}\, \overline{V}: \mbox{$V \in \mathscr{U}_n$  and $V^{t} = V$}\}\\
& = \{V \overline{U} V^{*}: \mbox{$V \in \mathscr{U}_{n}$ and $V^{t} = V$}\}.
\end{align*}
In the above calculations, note the use of \eqref{youuuu} and \eqref{99hhHHaa1212344}. 
\end{proof}

\begin{Corollary}\label{Ubbarrraaaa}
$\overline{U} \in \mathfrak{O}_c(U)$
for any $U \in \mathscr{U}_n$.
\end{Corollary}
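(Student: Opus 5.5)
The plan is to read this off directly from Proposition \ref{dfgdfgkkKKK12}, which describes $\mathfrak{O}_c(U)$ as the set of all matrices $V \overline{U} V^{*}$ with $V \in \mathscr{U}_n$ and $V^{t} = V$. It therefore suffices to exhibit one self-transpose unitary $V$ for which $V\overline{U}V^{*} = \overline{U}$. The obvious choice is $V = I$, the $n \times n$ identity matrix. It is unitary, it satisfies $I^{t} = I$, and it gives $I \overline{U} I^{*} = \overline{U}$, which places $\overline{U}$ in $\mathfrak{O}_c(U)$.

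Equivalently, and bypassing the matrix description, one can work directly with the conjugation $J\vec{x} = \overline{\vec{x}}$ on $\C^n$. By \eqref{youuuu} we have $JUJ = \overline{U}$, so $\overline{U}$ is of the form $CUC$ with $C = J$, and hence lies in $\mathfrak{O}_c(U)$ by the definition of the conjugate orbit. This is just the case $V = I$ of Proposition \ref{GTenner}, where $C = VJ$.

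The only point needing any care is the identity \eqref{youuuu} itself. It follows from the entrywise computation
$$(JUJ\vec{x})_i = \overline{\sum_j u_{ij}\overline{x_j}} = \sum_j \overline{u_{ij}}\, x_j,$$
and the paper already records this, so I expect no real obstacle.

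As a consistency check, one could also note that $\overline{U} = (U^{*})^{t}$ is unitarily similar to $U^{*}$, since a matrix and its transpose are similar, and similar unitary matrices are unitarily equivalent. This agrees with Corollary \ref{Sdfsdfdsf11112}, but it is not needed for the proof.
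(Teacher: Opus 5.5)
Your proposal is correct and is the same argument the paper intends: the statement is recorded as an immediate consequence of Proposition \ref{dfgdfgkkKKK12} with the self-transpose unitary $V = I$, which is equivalent to $\overline{U} = JUJ$ from \eqref{youuuu}. Your closing consistency check is also valid but, as you note, not needed.
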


Also observe the following corollary of Proposition \ref{dfgdfgkkKKK12} which determines when two elements of the conjugate orbit are the same. 

\begin{Corollary}\label{samememmee}
If $V_1, V_2, U \in \mathscr{U}_{n}$ with $V_1$ and $V_2$ are self transpose, then $V_1 \overline{U} V_{1}^{*} = V_2 \overline{U} V_{2}^{*}$ if and only if $V_{2} \overline{V_{1}}$ belongs to the commutant of $U$. 
\end{Corollary}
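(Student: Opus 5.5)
The plan is to reduce the statement to Proposition \ref{09876tygbhn} through the factorization of conjugations in Proposition \ref{GTenner}, with a direct matrix computation as a cross-check.

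\emph{Translating to conjugations.} Set $C_1 = V_1 J$ and $C_2 = V_2 J$. By Proposition \ref{GTenner}, both are conjugations on $\C^n$ because $V_1, V_2 \in \mathscr{U}_n$ are self transpose. The proof of Proposition \ref{dfgdfgkkKKK12} gives
$$C_k U C_k = V_k \overline{U}\, \overline{V_k} = V_k \overline{U} V_k^{*}, \qquad k = 1, 2,$$
using \eqref{youuuu} and \eqref{99hhHHaa1212344}. Hence $V_1 \overline{U} V_1^{*} = V_2 \overline{U} V_2^{*}$ holds exactly when $C_1 U C_1 = C_2 U C_2$.

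\emph{Applying Proposition \ref{09876tygbhn}.} By Proposition \ref{09876tygbhn}, applied with the roles of $C_1$ and $C_2$ interchanged, $C_2 U C_2 = C_1 U C_1$ if and only if $C_2 C_1$ lies in the commutant of $U$. It remains to compute this product:
$$C_2 C_1 = V_2 J V_1 J = V_2 (J V_1 J) = V_2 \overline{V_1}.$$
This gives the stated criterion. Since $C_2 C_1$ is automatically unitary, nothing further is needed.

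\emph{Direct check.} One can also argue without conjugations. The equality $V_1 \overline{U} V_1^{*} = V_2 \overline{U} V_2^{*}$ is equivalent to $V_2^{*} V_1$ commuting with $\overline{U}$. Conjugating every entry, this becomes $V_2^{t} \overline{V_1}$ commuting with $U$, and $V_2^{t} = V_2$. There is no real obstacle here. The only point needing care is the order of the factors: the statement has $V_2 \overline{V_1}$, which is $C_2 C_1$ rather than $C_1 C_2$. Both orders work anyway, since $C_1 C_2 = (C_2 C_1)^{*}$ by \eqref{sdfkjgfkgdsfvkjkjknknk}, and a unitary commutes with $U$ if and only if its adjoint does.
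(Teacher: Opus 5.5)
Your proof is correct and follows essentially the route the paper intends. The paper states the corollary without proof as a consequence of Proposition \ref{dfgdfgkkKKK12}, with Proposition \ref{09876tygbhn} and the computation $C_1C_2 = V_1\overline{V_2}$ from Remark \ref{commuttannt} supplying the link, and your observation that $V_2\overline{V_1} = C_2C_1 = (C_1C_2)^{*}$ correctly reconciles the order of factors in the statement with the order in that remark.
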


A version of Proposition \ref{ksdlkfjsldf88} for unitary matrices is the following.

\begin{Proposition}
Suppose that $U \in \mathscr{U}_n$. Then $W \in \mathfrak{O}_c(U)$ if and only if there is an orthonormal basis $\mathcal{E}$ for $\C^n$ so that $ [W]_{\mathcal{E}} = \overline{[U]_{\mathcal{E}}}$.
\end{Proposition}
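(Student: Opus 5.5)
This is the special case $\mathcal{H} = \C^n$ of Proposition \ref{ksdlkfjsldf88}, so the plan is to rerun that argument in finite dimensions, with the conjugations described concretely through Proposition \ref{GTenner}. Nothing in the earlier proof uses infinite dimensionality. The only external input there was the existence of a $C$-real orthonormal basis, and in $\C^n$ this can be produced by hand.

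For the forward direction, suppose $W = CUC$ for a conjugation $C$ on $\C^n$. I will produce a $C$-real orthonormal basis by induction on dimension.
\begin{enumerate}
\item Take any nonzero $\vec{x}$ and set $\vec{y} = \vec{x} + C\vec{x}$. If $\vec{y} \neq 0$, then $C\vec{y} = \vec{y}$. Otherwise $C\vec{x} = -\vec{x}$, and then $\vec{y} = i\vec{x}$ satisfies $C\vec{y} = -i C\vec{x} = i\vec{x} = \vec{y}$. Normalizing gives a unit vector $\vec{u}_1$ with $C\vec{u}_1 = \vec{u}_1$.
\item The orthogonal complement of $\vec{u}_1$ is $C$-invariant, because $\langle C\vec{z}, \vec{u}_1\rangle = \langle C\vec{u}_1, \vec{z}\rangle = \langle \vec{u}_1, \vec{z}\rangle$, which vanishes when $\vec{z} \perp \vec{u}_1$.
\item Repeating step (a) inside this complement produces the rest of the basis.
\end{enumerate}
Alternatively, one could diagonalize the symmetric unitary matrix $V$ from $C = VJ$ via a Takagi-type factorization $V = Q Q^{t}$ with $Q$ unitary; the columns of $Q$ are then $C$-real. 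The inductive argument is more elementary, so I would use it.

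With this basis $\mathcal{E} = (\vec{u}_j)$, the same chain of equalities as in the proof of Proposition \ref{ksdlkfjsldf88} applies:
\[
\langle CUC\vec{u}_j, \vec{u}_k\rangle = \langle C\vec{u}_k, U\vec{u}_j\rangle = \overline{\langle U\vec{u}_j, \vec{u}_k\rangle}.
\]
This gives $[W]_{\mathcal{E}} = \overline{[U]_{\mathcal{E}}}$.

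For the converse, suppose $\mathcal{E}$ is an orthonormal basis with $[W]_{\mathcal{E}} = \overline{[U]_{\mathcal{E}}}$. Define $C$ by $C\bigl(\sum a_j \vec{u}_j\bigr) = \sum \bar a_j \vec{u}_j$. This is antilinear, isometric and involutive, so it is a conjugation. Reading the computation above backwards shows that $CUC$ and $W$ have the same matrix entries in $\mathcal{E}$, so $W = CUC \in \mathfrak{O}_c(U)$.

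There is also a matrix version of the converse. Let $Q$ be the unitary whose columns are the $\vec{u}_j$. The hypothesis becomes $Q^{*}WQ = \overline{Q^{*}UQ} = Q^{t}\overline{U}\,\overline{Q}$, so $W = (QQ^{t})\,\overline{U}\,(QQ^{t})^{*}$. Since $QQ^{t}$ is a self-transpose unitary, Proposition \ref{dfgdfgkkKKK12} gives $W \in \mathfrak{O}_c(U)$.

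The only step needing real care is the existence of the $C$-real basis in the forward direction, and the inductive construction above settles it.
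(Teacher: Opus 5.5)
Your proposal is correct and follows the paper's route. The paper gives no separate proof here and treats the result as the $\C^n$ case of Proposition \ref{ksdlkfjsldf88}, whose argument is exactly your $C$-real-basis computation together with the converse in which $C$ conjugates coefficients. You add two things, both of which check out: the inductive construction of a $C$-real orthonormal basis, which replaces the external lemma cited in the proof of Proposition \ref{ksdlkfjsldf88}, and the matrix-form converse through the self-transpose unitary $QQ^{t}$ and Proposition \ref{dfgdfgkkKKK12}.
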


From the spectral theorem for unitary matrices (representing them as diagonal matrices with respect to an orthonormal basis), we have the following corollary.

\begin{Corollary}
Let $U \in \mathscr{U}_n$ with the ordered orthonormal basis of eigenvectors
$\mathcal{E} = (\vec{u}_{j})_{j = 1}^{n} $ and
$U \vec{u}_j = \xi_j \vec{u}_j$ for $j = 1, \ldots, n$. Then the unitary matrix $W$ such that
$$[W]_{\mathcal{E}} = \operatorname{diag}(\bar\xi_1, \ldots, \bar \xi_n),$$
in other words $W \vec{u}_j = \bar \xi_j \vec{u}_j$ for all $1 \leq j \leq n$, belongs to $\mathfrak{O}_c(U)$.
\end{Corollary}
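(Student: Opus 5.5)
The plan is to read the statement as the special case $\sigma = \operatorname{id}$ of Corollary \ref{C27}, or equivalently as an instance of Proposition \ref{ksdlkfjsldf88}, and to write down the conjugation explicitly. Let $C$ be the conjugation on $\C^n$ that fixes each eigenvector $\vec{u}_j$ and is extended antilinearly as in \eqref{Creeeeeeel}:
$$C\Big(\sum_{j=1}^n a_j \vec{u}_j\Big) = \sum_{j=1}^n \bar a_j \vec{u}_j.$$
First I will check that $C$ is indeed a conjugation. It is antilinear by construction. It is isometric because $\mathcal{E}$ is orthonormal, so the norm of a vector is the $\ell^2$ norm of its coefficients. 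It is involutive because conjugating the coefficients twice returns them unchanged.

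Next I will compute $CUC$ on the basis. Since $C\vec{u}_j = \vec{u}_j$, we get $CUC\vec{u}_j = C(\xi_j \vec{u}_j) = \bar\xi_j \vec{u}_j = W\vec{u}_j$ for each $j$. Both $CUC$ and $W$ are linear and agree on a basis, so $CUC = W$, and therefore $W \in \mathfrak{O}_c(U)$.

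As a cross-check, the same conclusion follows from the matrix version of Proposition \ref{ksdlkfjsldf88} stated just above. There $[U]_{\mathcal{E}} = \operatorname{diag}(\xi_1,\ldots,\xi_n)$, so $[W]_{\mathcal{E}} = \overline{[U]_{\mathcal{E}}}$, which is exactly the hypothesis of that result.

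There is no real obstacle here. The only point needing care is that $C$ must be defined relative to the eigenbasis $\mathcal{E}$ and not as the standard $J$ on $\C^n$. In general $JUJ = \overline{U}$, which is not $W$ unless $\mathcal{E}$ happens to be the standard basis.
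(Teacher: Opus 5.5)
Your proof is correct and is essentially the paper's argument. The paper reads the corollary off the matrix version of Proposition \ref{ksdlkfjsldf88} applied to the eigenbasis $\mathcal{E}$, and that proposition's proof constructs exactly your conjugation fixing each $\vec{u}_j$ (equivalently, the case $\sigma = \operatorname{id}$ of Corollary \ref{C27}). One small correction to your closing aside: since $W = U^{*}$, we have $JUJ = \overline{U} = W$ precisely when $U^{t} = U$, which also happens for nonstandard eigenbases (for instance, real eigenvectors), so ``unless $\mathcal{E}$ happens to be the standard basis'' overstates it.
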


It is important to point out that $ \operatorname{diag}(\bar\xi_1, \ldots, \bar \xi_n)$ represents  $U^{*}$ with respect to the orthonormal basis $\mathcal{E}$ for $\C^n$ but may not represent $U^{*}$ with respect to the standard orthonormal basis for $\C^n$  (see Theorem \ref{6771109999} and Example \ref{diagoanlalalal}).

\section{A simple example}

Even simple $2 \times 2$ diagonal unitary matrices tell us a lot about the conjugate orbit and provide nice examples of the results we have so far. 

\begin{Example}\label{twobytwo}
 Let $U$ be  the diagonal unitary matrix 
$$U = \begin{bmatrix} \xi_1 & 0\\ 0 & \xi_2 \end{bmatrix},$$
where $\xi_1, \xi_2 \in \T$ and distinct.  From Proposition \ref{dfgdfgkkKKK12} we know that
$$\mathfrak{O}_c(U) = \{V \overline{U} V^{*}: \mbox{$V \in \mathscr{U}_2$ and $V^{t} = V$}\}.$$ A standard description of all $2 \times 2$ unitary matrices $V$  is
$$V = e^{i \frac{\varphi}{2}}\begin{bmatrix} e^{i \alpha} \cos \theta & e^{i \beta} \sin \theta\\
- e^{-i\beta} \sin \theta & e^{-i \alpha} \cos \theta\end{bmatrix},$$
where $\varphi, \alpha, \beta, \theta \in \R$. One can check, we omit the calculation, that by imposing the condition that $V^{t} = V$, the matrix $V$ takes the form
\begin{equation}\label{VVVVVmatrix}
V = e^{i \frac{\varphi}{2}}\begin{bmatrix} e^{i \alpha} \cos \theta & i (-1)^n \sin \theta\\
i (-1)^n \sin \theta & e^{-i \alpha} \cos \theta\end{bmatrix}.
\end{equation}

When $\theta = n \pi$, the matrix  $V$ becomes
\begin{equation}\label{VeeVV}
V = e^{i \frac{\varphi}{2}}\begin{bmatrix} (-1)^n e^{i \alpha} & 0\\
0 & (-1)^n e^{-i \alpha} \end{bmatrix}
\end{equation} and thus
\begin{equation}\label{9isudjhfjhhhY11}
V \overline{U} V^{*} = \begin{bmatrix} \bar \xi_1 & 0\\ 0 & \bar \xi_2 \end{bmatrix},
\end{equation}
which is $U^{*}$ (as to be expected by Corollary \ref{Ubbarrraaaa}). 
Also observe that the above is true for any choice of $\alpha$, $n$, and $\varphi$ (as long as $\theta = n \pi$). So there are many choices of $V_{\alpha, n, \varphi}$ from \eqref{VeeVV} that yield the same member of the orbit of $U$. This should come as no surprise since $V_{\alpha, n, \varphi} \overline{V_{\alpha', n', \varphi'}}$ is a diagonal matrix which clearly commutes with the diagonal matrix $U$. Note the use of Corollary \ref{samememmee} here.

 When $\theta = \frac{1}{2} \pi (2 n + 1)$, the matrix $V$ from \eqref{VVVVVmatrix} becomes
$$V = e^{i \frac{\varphi}{2}}\begin{bmatrix} 0 & i\\
i & 0\end{bmatrix}$$
and so
$$V \overline{U} V^{*} = \begin{bmatrix} \bar \xi_2 & 0\\
0 & \bar \xi_1\end{bmatrix}.$$
Notice the diagonal entries are reversed from those in \eqref{9isudjhfjhhhY11}.
Again, many choices of $V$ (for various $\varphi$) yield the same member of the orbit (see Corollary \ref{samememmee}).

In general,  $\mathfrak{O}_c(U)$ consists of all the matrices $V \overline{U} V^{*}$, where $V$ is of the form \eqref{VVVVVmatrix}, and these turn out to be
$$\begin{bmatrix} e^{i \alpha} \cos \theta & i (-1)^n \sin \theta\\
i (-1)^n \sin \theta & e^{-i \alpha} \cos \theta\end{bmatrix} \begin{bmatrix} \bar \xi_1 & 0\\ 0 & \bar \xi_2 \end{bmatrix} \begin{bmatrix} e^{- i \alpha} \cos \theta & i (-1)^{n + 1} \sin \theta\\
i (-1)^{n + 1} \sin \theta & e^{i \alpha} \cos \theta\end{bmatrix}.$$

As a specific example, when $\xi_1 = 1$ and $\xi_2 = -1$ we obtain the family of matrices
$$\begin{bmatrix}
\cos (2 \theta) & -i e^{i \alpha} (-1)^n \sin (2 \theta) \\
 i e^{-i \alpha}  (-1)^n \sin (2 \theta) & -\cos (2 \theta)
 \end{bmatrix}, \quad \theta, \alpha \in \R, n \in \N_{0},$$
 as the orbit of 
 $$U = \begin{bmatrix} 1 & \phantom{-}0\\ 0 & -1\end{bmatrix}.$$ Here it is important to observe that even though the diagonal matrices $U$ (setting $\theta = 0$) and 
 $$\begin{bmatrix} -1 &0\\\phantom{-}0 & 1\end{bmatrix}$$
  (setting $\theta = \pi/2$) belong to $\mathfrak{O}_c(U)$, there are plenty of {\em nondiagonal} matrices that belong to $\mathfrak{O}_c(U)$, even though $U$ is diagonal. 

 What about the question as to when different
 $$V_{\alpha, \theta, n} := \begin{bmatrix} e^{i \alpha} \cos \theta & i (-1)^n \sin \theta\\
i (-1)^n \sin \theta & e^{-i \alpha} \cos \theta\end{bmatrix}$$
 yield the same member of the orbit of a diagonal $U$ with distinct diagonal entries. Recall that the commutant of a diagonal matrix with distinct entries consists of all the diagonal matrices. Here we are looking for  instances as to when
 $V_{\alpha, \theta, n} V_{\beta, \phi, m}^{*}$ is a diagonal matrix, with of course unimodular entries.  One can calculate
 $V_{\alpha, \theta, n} V_{\beta, \phi, m}^{*}$ to be
 $$
{\tiny  \begin{bmatrix}
 \sin \theta (-1)^{m+n} \sin \phi +e^{i (\alpha -\beta )} \cos \theta  \cos \phi  & i
   \left(e^{i \beta } (-1)^n \sin \theta  \cos \phi -e^{i \alpha } (-1)^m \cos \theta  \sin \phi
   \right) \\
 i \left(e^{-i \beta } (-1)^n \sin \theta  \cos \phi -e^{-i \alpha } (-1)^m \cos \theta  \sin
   \phi \right) & \sin \theta  (-1)^{m+n} \sin \phi +e^{-i (\alpha -\beta )} \cos \theta  \cos
   \phi
   \end{bmatrix}.}$$
   This will be a diagonal matrix when the parameters satisfy the condition 
   $$ e^{i \beta } (-1)^n \sin \theta  \cos \phi -e^{i \alpha } (-1)^m \cos \theta  \sin \phi = 0$$
which can indeed  happen for certain well chosen values of $\alpha, \beta, \theta, \phi, n, m$. 
\end{Example}

\section{Diagonal unitary matrices}

One needs to be careful in the above discussion since, for a fixed $U \in \mathscr{U}_n$,  the  {\em unitary orbit} $\mathfrak{O}(U)$, the set of all 
$V U V^{*},$ where $V \in \mathscr{U}_n$ (which as mentioned earlier turns out to be the set of all matrices in $\mathscr{U}_n$ which have the same eigenvalues with the same corresponding multiplicities as $U$), can be much different than the {\em conjugate orbit} of $U$ which consists of the collection of unitary matrices $V \overline{U} V^{*}$, where $V \in \mathscr{U}_n$ and $V = V^{t}$. Certainly, by Proposition \ref{SdfsdfdsfcvvvvVV} and Proposition \ref{uuueeeqq}, we know that any member of $\mathfrak{O}_c(U)$ must by unitary equivalent to $U^{*}$. However, even a diagonal matrix consisting of the eigenvalues of  $U^{*}$ (with corresponding multiplicities) need not belong to $\mathfrak{O}_c(U)$. Below is a theorem that sorts this out, followed by two cautionary examples. 

\begin{Theorem}\label{6771109999}
Suppose that $U \in \mathscr{U}_n$ with eigenvalues $(\xi_j)_{j = 1}^{n}$, repeated according to multiplicity. Then $\operatorname{diag}(\bar \xi_1, \bar \xi_2, \ldots, \bar \xi_n) 
\in \mathfrak{O}_c(U)$ if and only if there is an ordered orthonormal basis $(\vec{u}_j)_{j = 1}^{n}$ for $\C^n$ with $U \vec{u}_j = \xi_j \vec{u}_j$ for all $1 \leq j \leq n$ and such that  the unitary matrix $V = [\vec{u}_1 | \vec{u}_2| \cdots |\vec{u}_n]$
satisfies $V^{t} = V$. 
\end{Theorem}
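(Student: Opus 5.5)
The plan is to use the description of the orbit in Proposition \ref{dfgdfgkkKKK12}, namely $\mathfrak{O}_c(U) = \{W \overline{U} W^{*} : W \in \mathscr{U}_n,\ W^{t} = W\}$. Write $D := \operatorname{diag}(\bar \xi_1, \ldots, \bar \xi_n)$. Taking complex conjugates of entries turns each statement about $\overline{U}$ into one about $U$. The whole proof rests on this translation between ``$D = W \overline{U} W^{*}$'' and ``the columns of $\overline{W}$ form an orthonormal eigenbasis of $U$ with eigenvalues $\xi_j$''. Throughout we use that $\overline{W} = W^{*}$ whenever $W$ is a self-transpose unitary, by \eqref{99hhHHaa1212344}.

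\textbf{($\Leftarrow$)} Suppose $(\vec{u}_j)$ is an orthonormal eigenbasis with $U\vec{u}_j = \xi_j \vec{u}_j$, and that $V = [\vec{u}_1|\cdots|\vec{u}_n]$ satisfies $V^{t} = V$. Then $V$ is unitary and $UV = V\operatorname{diag}(\xi_1,\ldots,\xi_n)$, so $V^{*}UV = \operatorname{diag}(\xi_1,\ldots,\xi_n)$. Conjugating entries gives $\overline{V}^{*}\,\overline{U}\,\overline{V} = D$, that is, $V^{t}\overline{U}\,\overline{V} = D$. Since $V^{t} = V$ and $\overline{V} = V^{*}$, this reads $D = V\overline{U}V^{*}$. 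Because $V$ is self-transpose unitary, Proposition \ref{dfgdfgkkKKK12} gives $D \in \mathfrak{O}_c(U)$.

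\textbf{($\Rightarrow$)} Suppose $D = W\overline{U}W^{*}$ for some self-transpose unitary $W$. Conjugating entries gives $\operatorname{diag}(\xi_1,\ldots,\xi_n) = \overline{W}\,U\,\overline{W}^{*}$. Hence $U\,\overline{W}^{*} = \overline{W}^{*}\operatorname{diag}(\xi_j)$, so the columns of $\overline{W}^{*} = W^{t} = W$ are eigenvectors of $U$ with eigenvalues $\xi_j$. Set $\vec{u}_j$ to be the $j$th column of $W$ and $V := W$. Then $(\vec{u}_j)$ is an orthonormal basis because $W$ is unitary, it satisfies $U\vec{u}_j = \xi_j\vec{u}_j$, and $V^{t} = V$, as required.

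The only delicate point is the bookkeeping between $W$, $\overline{W}$, $W^{*}$ and $W^{t}$. One must check carefully that the matrix whose columns are the eigenvectors is exactly the self-transpose matrix $W$ itself, and not its conjugate. The identity $\overline{W}^{*} = W^{t} = W$ resolves this. I expect no other obstacle.
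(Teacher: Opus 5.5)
Your proof is correct and takes essentially the same route as the paper. Both directions rest on writing the conjugation as $C = VJ$ with $V$ a self-transpose unitary, so that $CUC = V\overline{U}\,\overline{V} = V\overline{U}V^{*}$, and on conjugating entries to pass between $\operatorname{diag}(\bar\xi_1,\ldots,\bar\xi_n) = V\overline{U}V^{*}$ and $UV = V\operatorname{diag}(\xi_1,\ldots,\xi_n)$. The differences are cosmetic: you cite Proposition \ref{dfgdfgkkKKK12} directly where the paper recomputes $CUC$ column by column, and you get the reverse direction by conjugating $V^{*}UV = \operatorname{diag}(\xi_1,\ldots,\xi_n)$.
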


\begin{proof}
Suppose that $(\vec{u}_j)_{j = 1}^{n}$ is an orthonormal eigenbasis for $U$ in that $U \vec{u}_j = \xi_j \vec{u}_j$ for all $1 \leq j \leq n$ and that  this basis satisfies the condition that $V = [\vec{u}_1 | \vec{u}_2| \cdots |\vec{u}_n]$ is self transpose unitary matrix. Then, as seen earlier,  by a result of Garcia and Tenner  \cite[Lemma 3.2]{MR2966041} (see Proposition \ref{GTenner}), $C = V J$, more precisely, $C \vec{x} = V \overline{\vec{x}}$, defines a conjugation on $\mathbb{C}^n$. Furthermore, 
\begin{align*}
CUC & = (V J) U (VJ)\\
& = (VJ) U JJ (VJ)\\
& = V (JUJ) (JVJ)\\
& = V \overline{U}\,\overline{V}\\
& = V \overline{U} [\overline{\vec{u}_1} | \overline{\vec{u}_2}| \cdots |\overline{\vec{u}_n}]\\
& = V  [\overline{U}\overline{\vec{u}_1} | \overline{U}\overline{\vec{u}_2}| \cdots |\overline{U}\overline{\vec{u}_n}]\\
& = V[\bar \xi_1 \overline{\vec{u}_1} | \bar \xi_2 \overline{\vec{u}_2}| \cdots | \bar \xi_n\overline{\vec{u}_n}]\\
& = V   [\overline{\vec{u}_1} | \overline{\vec{u}_2}| \cdots |\overline{\vec{u}_n}] \operatorname{diag}(\bar \xi_1, \bar \xi_2, \ldots, \bar \xi_n)\\
& = V \overline{V} \operatorname{diag}(\bar \xi_1, \bar \xi_2, \ldots, \bar \xi_n)\\
& = \operatorname{diag}(\bar \xi_1, \bar \xi_2, \ldots, \bar \xi_n),
\end{align*}
where, in the last step of the above calculation, we use the fact that $V$ is unitary and self transpose along with  \eqref{99hhHHaa1212344}. Thus, $\operatorname{diag}(\bar \xi_1, \bar \xi_2, \ldots, \bar \xi_n) \in \mathfrak{O}_c(U)$. 

Conversely, suppose that $\operatorname{diag}(\bar \xi_1, \bar \xi_2, \ldots, \bar \xi_n) \in \mathfrak{O}_c(U)$. Then there is a conjugation $C$ on $\mathbb{C}^n$ such that $C U C = \operatorname{diag}(\bar \xi_1, \bar \xi_2, \ldots, \bar \xi_n)$. As discussed before,  there is a unitary matrix $V$ with $V^t = V$ such that $C = V J$, i.e., $C \vec{x} = V \overline{\vec{x}}$. By the calculation above we have 
$$\operatorname{diag}(\bar \xi_1, \bar \xi_2, \ldots, \bar \xi_n) = C U C  = V \overline{U} \,\overline{V}.$$ Taking complex conjugates of both sides of the previous identity gives us 
$$\overline{V} U V = \operatorname{diag}(\xi_1, \xi_2, \ldots, \xi_n),$$
and, again by noting that $V \overline{V} = I$, we see that 
\begin{equation}\label{09993397711}
U V = V\operatorname{diag}(\xi_1, \xi_2, \ldots, \xi_n).
\end{equation}
Multiplying  each side of the above on the right by each of the standard basis elements $\vec{e}_j$, $1 \leq j \leq n$, for $\C^n$, we get 
$$U (V \vec{e}_j) = \xi_j (V \vec{e}_j), \quad 1 \leq j \leq n.$$
This says that $(V \vec{e}_j)_{j = 1}^{n}$ is an orthonormal basis of eigenvectors for $U$ and, by assumption,   the matrix 
$V = [V \vec{e}_1| V\vec{e}_2| \cdots | V \Vec{e}_n]$
is self transpose. 
\end{proof}

One must be careful with the previous theorem in that not every diagonal matrix consisting of the eigenvalues of $U^{*}$ (with corresponding multiplicities) belongs to the conjugate orbit of $U$, just those which can be realized by eigenvectors whose columns from a self transpose matrix. This point  is made more salient with  the following example. 

\begin{Example}\label{diagoanlalalal}
For {\em distinct} $\xi_1, \xi_2, \xi_3 \in \T$, let $U$ be the $3 \times 3$ diagonal unitary matrix defined by 
$$U = \operatorname{diag}(\xi_1, \xi_2, \xi_3).$$ Suppose there exists a conjugation $C$ on $\mathbb{C}^3$ such that $C U C = \operatorname{diag}(\bar \xi_3, \bar \xi_1, \bar \xi_2)$. Observe the reordering and the complex conjugation of the diagonal elements of $U$. Following the steps of  the proof of the previous theorem, see \eqref{09993397711}, there is a self transpose $3 \times 3$ unitary matrix $V$ such that 
$$UV = V\!\operatorname{diag}(\xi_3, \xi_1, \xi_2).$$
Multiplying the vector $\vec{e}_1$ to the right hand side of each matrix from the previous line to see that 
$$U(V \vec{e}_1) = \xi_3 (V \vec{e}_1).$$
Thus $V \vec{e}_1 \in \ker(U - \xi_3 I) \setminus \{\vec{0}\}$ and so $V \vec{e}_1 = a \vec{e}_3$ for some $a$ with $|a| = 1$. In a similar way, $V \vec{e}_2 = b \vec{e}_1$ and $V \vec{e}_3 = c \vec{e}_2$ for $|b| = |c| = 1$. It follows that 
$$V = [V \vec{e}_1 | V \vec{e}_2 | V \vec{e}_3] = \begin{bmatrix} 0 & b & 0\\ 0 & 0 & c\\ a & 0 & 0\end{bmatrix},$$
which is not symmetric. From here we conclude that  $\operatorname{diag}(\bar \xi_3, \bar \xi_1, \bar \xi_2) \not \in \mathfrak{O}_c(U)$, even though its diagonal entries  consist of the eigenvalues of $U^{*}$. We leave it to the reader to use Corollary \ref{C27} to see that $\operatorname{diag}(\bar \xi_2, \bar \xi_1, \bar \xi_3)$ does belong to $\mathfrak{O}_c(U)$. 
\end{Example}

The previous example suggests that when the eigenvalues are {\em distinct}, the permutation of the conjugates of the eigenvalues of the diagonal matrix $U$ must be of order two. We formalize this with the following corollary. 

\begin{Corollary}\label{sdfugsdfg99991008}
If $\xi_1, \xi_2, \ldots, \xi_n$ are distinct unimodular constants,
$$U = \operatorname{diag}(\xi_1, \xi_2, \ldots, \xi_n),$$ and $\sigma$ is a permutation of $\{1, 2, \ldots, n\}$, then 
the following are equivalent.
\begin{enumerate}
\item $\operatorname{diag}(\bar \xi_{\sigma(1)}, \bar \xi_{\sigma(2)}, \ldots, \bar \xi_{\sigma(n)}) \in \mathfrak{O}_c(U)$;
\item   $\sigma \circ \sigma = \operatorname{id}$. 
\end{enumerate}
\end{Corollary}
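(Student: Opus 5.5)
The plan is to handle the two directions separately. For (b) $\Rightarrow$ (a) I would simply invoke the finite-dimensional version of Corollary \ref{C27} recorded in Example \ref{Ex2too}, taking $\vec{u}_j = \vec{e}_j$ as the standard basis. If $\sigma\circ\sigma = \operatorname{id}$, the conjugation $C_\sigma \vec{x} = \sum_j \bar x_j \vec{e}_{\sigma(j)}$ gives $C_\sigma U C_\sigma = \operatorname{diag}(\bar \xi_{\sigma(1)}, \ldots, \bar\xi_{\sigma(n)})$. Equivalently, via Theorem \ref{6771109999}, the permutation matrix $P_\sigma$ with $P_\sigma \vec{e}_j = \vec{e}_{\sigma(j)}$ is symmetric precisely because $\sigma$ is an involution.

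For (a) $\Rightarrow$ (b) I would follow the argument of Example \ref{diagoanlalalal}. Suppose the diagonal matrix $D_\sigma := \operatorname{diag}(\bar \xi_{\sigma(1)}, \ldots, \bar\xi_{\sigma(n)})$ lies in $\mathfrak{O}_c(U)$. By Proposition \ref{GTenner} we can write $D_\sigma = V\overline{U}\,\overline{V}$ with $V$ unitary and $V^t = V$. Conjugating entrywise and using $V\overline{V} = I$ from \eqref{99hhHHaa1212344} gives, exactly as in \eqref{09993397711},
$$UV = V \operatorname{diag}(\xi_{\sigma(1)}, \ldots, \xi_{\sigma(n)}).$$
Applying both sides to $\vec{e}_j$ shows $U(V\vec{e}_j) = \xi_{\sigma(j)} V\vec{e}_j$. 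Since the $\xi_k$ are distinct, each eigenspace $\ker(U - \xi_{\sigma(j)}I)$ is $\C\vec{e}_{\sigma(j)}$, so $V\vec{e}_j = a_j \vec{e}_{\sigma(j)}$ with $|a_j| = 1$. In other words, $V$ has entries $V_{\sigma(j), j} = a_j$ and zeros elsewhere, i.e. $V$ is a monomial (weighted permutation) matrix.

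The last step, which is the only real point of the argument, is to extract $\sigma\circ\sigma=\operatorname{id}$ from symmetry. The condition $V^t = V$ gives $V_{j,\sigma(j)} = V_{\sigma(j),j} = a_j \neq 0$. The only nonzero entry in column $\sigma(j)$ sits in row $\sigma(\sigma(j))$, so $j = \sigma(\sigma(j))$ for every $j$. The place where care is needed is the use of distinctness of the $\xi_k$ to force one-dimensional eigenspaces. Without distinctness the argument fails, and Theorem \ref{6771109999} is then the correct general statement.
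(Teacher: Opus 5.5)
Your proposal is correct and follows essentially the same route as the paper. Direction (b)$\Rightarrow$(a) goes through Example~\ref{Ex2too}. Direction (a)$\Rightarrow$(b) uses the intertwining relation \eqref{09993397711} from the proof of Theorem~\ref{6771109999}, then distinctness of the eigenvalues to force $V\vec{e}_j = a_j\vec{e}_{\sigma(j)}$, then the symmetry of $V$. Your final step, that column $\sigma(j)$ has its only nonzero entry in row $\sigma(\sigma(j))$, is a slightly cleaner statement of the paper's concluding argument.
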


\begin{proof}
$(b) \Longrightarrow (a)$ follows from Example \ref{Ex2too}. For the proof of $(a) \Longrightarrow (b)$, note that the identity $U \vec{e}_j = \xi_j \vec{e}_j$ for $1 \leq j \leq n$ and Theorem \ref{6771109999} says there is a there is a self-transpose unitarty matrix $V$ such that 
$$V = [V\vec{e}_{\sigma(1)} | V\vec{e}_{\sigma(2)} |\cdots |V \vec{e}_{\sigma(n)}].$$
From from \eqref{09993397711} it follows that 
$$U V = V \operatorname{diag}(\xi_{\sigma(1)}, \xi_{\sigma(2)}, \ldots, \xi_{\sigma(n)}).$$
Applying the analysis in the discussion of Example \ref{diagoanlalalal}, we see that 
$$U (V \vec{e}_j) = \xi_{\sigma(j)} (V \vec{e}_j), \quad 1 \leq j  \leq n,$$
and so $V \vec{e}_j = a_j \vec{e}_{\sigma(j)}$ for some $a_j \in \T$. This says that the matrix $V$ has an $a_j$ in the $(j, \sigma(j))$ position. However, $V$ is self-transpose and so this same $a_j$ must appear in the $(\sigma(j), j)$ position, and thus 
$$V \vec{e}_{j}  = a_j \vec{e}_{\sigma(j)} \; \mbox{and} \; V\vec{e}_{\sigma(j)} = a_j \vec{e}_j.$$
Since the eigenvalues $\xi_1, \ldots, \xi_n$ are distinct, we conclude that 
 $\sigma^2 = \operatorname{id}$
\end{proof}

This next example produces a unitary matrix such that {\em no} diagonal matrix  belongs to its conjugate orbit. 

\begin{Example}\label{qudiagonall9}
Consider the $3 \times 3$ matrix 
$$U = Q \begin{bmatrix} \xi_1 & 0 & 0\\ 0 & \xi_2 & 0\\ 0 & 0 & \xi_3\end{bmatrix} Q^{*},$$
where $\xi_1, \xi_2, \xi_3$ are {\em distinct} elements of $\T$ and $Q = [\vec{q}_1 | \vec{q}_2 | \vec{q}_3]$, where 
$$\vec{q}_1 = \begin{bmatrix}\frac{1+i}{\sqrt{3}}\\ \frac{i}{\sqrt{6}}\\ \frac{1}{\sqrt{6}}\end{bmatrix}, \; \; 
\vec{q}_2 = \begin{bmatrix}\frac{1-i}{\sqrt{6}}\\ \frac{1}{\sqrt{3}}\\\frac{i}{\sqrt{3}}\end{bmatrix}, \; \; 
\vec{q}_3 = \begin{bmatrix}0 \\ \frac{1}{\sqrt{6}}+\frac{i}{\sqrt{3}}\\\frac{1}{\sqrt{3}}-\frac{i}{\sqrt{6}}\end{bmatrix},$$
which forms an orthonormal basis for $\C^{3}$. One can see that $U$ is a unitary matrix with one-dimensional eigenspaces $\C \vec{q}_j$, $j  = 1, 2, 3$, corresponding to the distinct eigenvalues $\xi_1, \xi_2, \xi_3$. By Theorem \ref{6771109999}, a diagonal operator (which must consist of the diagonal entries $\bar \xi_1, \bar \xi_2, \bar \xi_3$ in some order) belongs to the conjugate orbit of $U$ if and only if some permutation of the (column) vectors $z_1 \vec{q}_1, z_2 \vec{q}_2, z_3 \vec{q}_3$, where $z_1, z_2, z_3$ are unimodular constants, form the columns of a self transpose matrix. For the particular unitary matrix $U$, this will not happen. To justify this, consider 
the matrix 
$$[z_1 \vec{q}_1 | z_2 \vec{q}_2 | z_2 \vec{q}_3] =
\begin{bmatrix} z_1 \frac{1+i}{\sqrt{3}} & z_2 \frac{1-i}{\sqrt{6}} & z_3 \cdot 0 \\
z_1 \frac{-i}{\sqrt{6}} & z_2 \frac{1}{\sqrt{3}} & z_3 (\frac{1}{\sqrt{6}}+\frac{i}{\sqrt{3}}) \\
z_1  \frac{1}{\sqrt{6}} & z_2 \frac{i}{\sqrt{3}} & z_3 (\frac{1}{\sqrt{3}}-\frac{i}{\sqrt{6}} )
 \end{bmatrix}.$$
 Focusing on the $(3, 1)$ and $(1, 3)$ entries of this matrix, we see there is no choice of unimodular constants $z_1$ and  $z_3$ which makes these two entries equal. As another example, consider the matrix 
 $$[w_1 \vec{q}_3 |w_2 \vec{q}_2 | w_3 \vec{q}_1] = \begin{bmatrix}
 w_1 \cdot 0 & w_2 \frac{1-i}{\sqrt{6}} & w_3 \frac{1+i}{\sqrt{3}} \\
 w_1 (\frac{1}{\sqrt{6}}+\frac{i}{\sqrt{3}} )& w_2 \frac{1}{\sqrt{3}} & w_3 \frac{-i}{\sqrt{6}} \\
 w_1 (\frac{1}{\sqrt{3}}-\frac{i}{\sqrt{6}}) & w_2 \frac{i}{\sqrt{3}} & w_3 \frac{1}{\sqrt{6}} \\
\end{bmatrix}.$$
Looking  at the $(3, 2)$ and $(2, 3)$ entries of this matrix, we see there is no choice of unimodular constants $w_2$ and $w_3$ that makes these entries equal. Now consider all six permutations of the vectors $\vec{q}_1, \vec{q}_2, \vec{q}_3$, and check that it is always possible to find two symmetric entires $(i, j)$ and $(j, i)$ that can not be made equal by multiplying by unimodular constants. Thus, in this particular example,  {\em no} diagonal matrix belongs to the conjugate orbit of $U$.
\end{Example}

\chapter{The Bilateral Shift}\label{S-Five}

The bilateral shift is an example of a unitary operator where we can concretely describe its conjugate orbit.  We will expand this discussion in various directions later in the paper. The {\em bilateral shift}  $M$ on $\ell^2(\Z)$  is initially defined by 
$$M \vec{e}_{n} = \vec{e}_{n + 1},$$
where, for each $n \in \Z$, 
$$\vec{e}_n := (\cdots 0, 0, 0, 1, 0, 0, \cdots)$$
(the $1$ appears in the $n$\textsuperscript{th} slot)
 is the two-sided standard orthonormal basis vector for the sequence space 
 \begin{equation}\label{sdfg7sd7fsdf7sdfaaaQQQQ}
 \ell^2(\Z) := \Big\{\vec{a} = (a_j)_{j = -\infty}^{\infty}: \|\vec{a}\| := \Big(\sum_{j = -\infty}^{\infty} |a_j|^{2}\Big)^{\frac{1}{2}} < \infty\Big\},
 \end{equation}
 and then extended linearly to a unitary operator on $\ell^2(\Z)$. In other words, 
 $$M\Big(\sum_{j = -\infty}^{\infty} a_{j} \vec{e}_j \Big)= \sum_{j = -\infty}^{\infty} a_j \vec{e}_{j + 1}.$$ Of course we have seen this operator earlier as its unitary equivalent realization as the multiplication operator $(M_{\xi} f)(\xi) = \xi f(\xi)$ (multiplication by the independent variable $\xi$) on $L^2(m)$, where $m$ is normalized Lebesgue measure on the unit circle $\T$. One can see this unitary equivalence via Fourier series and identifying each vector $\vec{e}_n$ in $\ell^2(\Z)$ with the monomial $\xi^{n}$ ($\xi = e^{i \theta}$) in $L^2(m)$. More precisely via the isometric isomorphism 
 $$\sum_{j = -\infty}^{\infty} a_j \vec{e}_j \mapsto \sum_{j = -\infty}^{\infty} a_j \xi^j$$
 from $\ell^2(\Z)$ onto $L^2(m)$. 

\section{The conjugate orbit as unitary shifts}

A preliminary step towards describing $\mathfrak{O}_c(M)$ is an  extension  of the result of Garcia and Tener  \cite[Lemma 3.2]{MR2966041} for  conjugations on $\C^n$ to the infinite dimensional setting of  $\ell^2(\Z)$. The proof is essentially the same as the proof of the analogous result for $\C^n$ in Proposition \ref{GTenner}. Define the conjugation 
$$J: \ell^2(\Z) \to \ell^2(\Z), \quad J \vec{x} = \overline{\vec{x}}.$$

\begin{Proposition}\label{OShfgfdsfa}
For a map $C$ on $\ell^2(\Z)$, the following are equivalent. 
\begin{enumerate}
\item $C$ is a conjugation; 
\item There is a doubly infinite unitary matrix with $V = [V_{i j}]_{i, j \in \Z}$ with $V = V^{t}$ and $C = V J$. In other words, $C \vec{x} = V \overline{\vec{x}}$.
\end{enumerate}
\end{Proposition}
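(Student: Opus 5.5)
We follow the proof of Proposition \ref{GTenner}, replacing finite matrix manipulations with their bounded-operator counterparts on $\ell^2(\Z)$. Identify each bounded operator $A$ on $\ell^2(\Z)$ with its doubly infinite matrix $[\langle A\vec{e}_j,\vec{e}_i\rangle]_{i,j\in\Z}$ with respect to the standard basis $(\vec{e}_n)_{n\in\Z}$. The first step is to record the infinite analogues of \eqref{youuuu} and \eqref{99hhHHaa1212344}.

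For the analogue of \eqref{youuuu}, note that $J\vec{e}_j=\vec{e}_j$ and $J$ is antilinear. Hence for any bounded $A$ we get
\[
\langle JAJ\vec{e}_j,\vec{e}_i\rangle=\overline{\langle A\vec{e}_j,\vec{e}_i\rangle}.
\]
So $JAJ$ is bounded and linear, and its matrix is $\overline{A}$. Since $\ell^2(\Z)$ is infinite dimensional, there is a subtlety here: a bounded operator is determined by its matrix entries, so $JAJ=\overline{A}$ as operators.

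For the analogue of \eqref{99hhHHaa1212344}, suppose $V$ is unitary. The matrix of $V^{*}$ is the conjugate transpose of the matrix of $V$, so $\overline{V}=(V^{*})^{t}$ entrywise. Therefore $V^{t}=V$ holds if and only if $\overline{V}=V^{*}$. Because $V$ is unitary, this is equivalent to $V\overline{V}=I$, which in turn is equivalent to $\overline{V}V=I$. One direction is clear. For the other, $V\overline{V}=I$ together with $VV^{*}=I$ and the invertibility of $V$ forces $\overline{V}=V^{*}$. All of these are operator identities among bounded operators, so no convergence issues arise beyond the fact that products of matrices of bounded operators represent the composed operators.

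With these facts the two implications go as in Proposition \ref{GTenner}.

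(a)$\Rightarrow$(b): Put $V:=CJ$. It is linear, since it is a composition of two antilinear maps. It is isometric, and it is onto, since $V(J C\vec{x})=\vec{x}$. Hence $V$ is unitary and $C=VJ$ because $J^2=I$. Then
\[
V\overline{V}=VJVJ=C^2=I,
\]
and so $V^{t}=V$.

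(b)$\Rightarrow$(a): $C=VJ$ is antilinear and isometric. Moreover
\[
C^2=VJVJ=V\overline{V}=I,
\]
so $C$ is a conjugation.

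The only point we expect to need care is justifying the passage between matrix statements and operator statements, namely $JVJ=\overline{V}$ and the equivalence $V^t=V\iff V\overline{V}=I$, for genuinely infinite matrices. This is settled by checking matrix entries against the basis $(\vec{e}_n)$ and using boundedness.
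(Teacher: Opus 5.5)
Your proof is correct and takes the same route the paper indicates: it carries the argument of Proposition \ref{GTenner} (set $V = CJ$ and use $V\overline{V} = VJVJ = C^2$) over to $\ell^2(\Z)$. Your checks via matrix entries against $(\vec{e}_n)$, namely that $JVJ$ is the bounded operator with matrix $\overline{V}$ and that $V^{t} = V \iff V\overline{V} = I$ for unitary $V$, fill in exactly the infinite-dimensional details the paper leaves implicit.
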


\begin{Definition}
A unitary operator $W$ on $\ell^2(\Z)$ is called a {\em unitary shift} if there is an orthonormal basis $(\vec{v}_n)_{n = -\infty}^{\infty}$ for $\ell^2(\Z)$ such that $W \vec{v}_n = \vec{v }_{n + 1}$ for each $n \in \Z.$
\end{Definition}

Our description of $\mathfrak{O}_c(M)$ on $\ell^2(\Z)$ is the following.

\begin{Theorem}\label{0934t4rhhHHCccvvJ}
For a unitary operator $W$ on $\ell^2(\Z)$, the following are equivalent. 
\begin{enumerate}
\item $W \in \mathfrak{O}_c(M)$; 
\item $W$ is a unitary shift with $W \vec{v}_{j} = \vec{v}_{j + 1}$, where $V = [\cdots| \vec{v}_{-1}|\vec{v}_0|\vec{v}_1|\cdots]$ is a doubly infinite unitary matrix with $V^{t} = V$. Moreover, $W = (VJ) M (VJ)$.
\end{enumerate}
\end{Theorem}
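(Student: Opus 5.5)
The plan is to reduce everything to Proposition \ref{OShfgfdsfa}, which says every conjugation on $\ell^2(\Z)$ has the form $C = VJ$ with $V$ a doubly infinite self-transpose unitary matrix, and then to compute $(VJ)M(VJ)$ on the basis vectors $\vec{v}_j := V\vec{e}_j$, the columns of $V$. First I will record two facts that follow directly from the definitions. Since $M$ has real (zero/one) matrix entries, $JMJ = M$. Since $V = V^{t}$ and $V$ is unitary, $V\overline{V} = I$, and hence $\overline{V} = V^{*}$; this is the $\ell^2(\Z)$ analogue of \eqref{99hhHHaa1212344}, proved the same way.

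For (a) $\Rightarrow$ (b), suppose $W = CMC$ for some conjugation $C$. Write $C = VJ$ as in Proposition \ref{OShfgfdsfa}. Then, exactly as in the proof of Proposition \ref{dfgdfgkkKKK12},
$$W = (VJ)M(VJ) = V(JMJ)(JVJ) = VM\overline{V}.$$
Apply this to $\vec{v}_j = V\vec{e}_j$. Since $\overline{V}V = I$,
$$W\vec{v}_j = VM\overline{V}V\vec{e}_j = VM\vec{e}_j = V\vec{e}_{j+1} = \vec{v}_{j+1}.$$
Because $V$ is unitary, its columns $(\vec{v}_j)_{j\in\Z}$ form an orthonormal basis. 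So $W$ is a unitary shift relative to the columns of a self-transpose unitary $V$, and $W = (VJ)M(VJ)$.

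For (b) $\Rightarrow$ (a), let $V = [\cdots|\vec{v}_{-1}|\vec{v}_0|\vec{v}_1|\cdots]$ be unitary with $V^{t} = V$ and $W\vec{v}_j = \vec{v}_{j+1}$. By Proposition \ref{OShfgfdsfa}, $C := VJ$ is a conjugation. The computation above shows that $CMC$ sends $\vec{v}_j$ to $\vec{v}_{j+1}$ for every $j$, so it agrees with $W$ on an orthonormal basis. Both operators are bounded and linear, so $W = CMC \in \mathfrak{O}_c(M)$.

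The main step needing care is the identity $JVJ = \overline{V} = V^{*}$ for the doubly infinite matrix, which is where self-transposition is used. Its justification is the following. Entrywise, $\langle JVJ\vec{e}_j, \vec{e}_i\rangle = \overline{V_{ij}}$. Moreover $V^{*}$ has entries $\overline{V_{ji}} = \overline{V_{ij}}$ by the symmetry of $V$. Two bounded operators with the same matrix entries coincide, so $JVJ = V^{*}$, and all remaining steps are formal.
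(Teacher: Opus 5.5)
Your proposal is correct and follows essentially the same route as the paper. You write $C = VJ$ via Proposition \ref{OShfgfdsfa}, check that $CMC$ sends each column $V\vec{e}_j$ to $V\vec{e}_{j+1}$, and conclude by agreement on an orthonormal basis. The only cosmetic difference is in the middle step: you factor $CMC = VM\overline{V} = VMV^{*}$ and use $\overline{V}V = I$, whereas the paper notes $V\vec{e}_n = VJ\vec{e}_n = C\vec{e}_n$ and uses $C^2 = I$ directly; the two ingredients are equivalent.
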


\begin{proof}
$(a) \Longrightarrow (b)$: If $W \in \mathfrak{O}_c(M)$ then, by definition of the conjugate orbit, there is a conjugation $C$ on $\ell^2(\Z)$ with $C M C = W$. By Proposition \ref{OShfgfdsfa}, $C = V J$, where  $V$ is a doubly infinite self transpose unitary matrix. Then, with respect to the orthonormal basis  $(V \vec{e}_{n})_{n \in \Z}$ for $\ell^2(\Z)$, which are just the columns of the matrix  $V$, we have
\begin{align*}
W (V \vec{e}_n) & = 
C M C (V \vec{e}_n)\\
 & = C M C V J \vec{e}_n\\
& = C M C^2 \vec{e}_n \\
&= C M \vec{e}_n \\
&= C \vec{e}_{n + 1} \\
& = V J \vec{e}_{n + 1} \\
& = V \vec{e}_{n + 1}.
\end{align*}
Thus, $W$ is a unitary shift of the desired form.

$(b) \Longrightarrow (a)$: 
Suppose that $W$ is a unitary shift with corresponding column vectors arising from a doubly infinite unitary matrix  $V$ with $V = V^{t}$ (equivalently, $J V J = V^{*}$),  in that $W \vec{v}_n = \vec{v}_{n + 1}$ with $\vec{v}_{n} = V \vec{e}_n$ for all $n \in \Z$.  Then $C = V J$ is a conjugation on $\ell^2(\Z)$. 
Moreover,
\begin{align*}
(CMC) \vec{v}_n & = (C M C) V \vec{e}_n\\
& = C M C C J \vec{e}_n\\
&= C M C C \vec{e}_{n}\\
& = C M \vec{e}_{n}\\
& = C \vec{e}_{n + 1}\\
& = C J \vec{e}_{n + 1}\\
& = V \vec{e}_{n + 1}\\
& = \vec{v}_{n + 1}\\
& = W \vec{v}_n.
\end{align*}
The above says that the two unitary operators $W$ and $CMC$ on $\ell^2(\Z)$ agree on the orthonormal basis $(\vec{v}_n)_{n = -\infty}^{\infty}$ (and hence everywhere) and so $W \in \mathfrak{O}_c(M)$. 

To verify that $W = (VJ) M (VJ)$, observe that since $V^{t} = V$, we see that $V \overline{\vec{v}_j} = \vec{e}_j$. Thus, 
\begin{align*}
(VJ) M (VJ) \vec{v}_j & = VJ M V \overline{\vec{v}_j}\\
& = VJ M \vec{e}_j\\
& = VJ \vec{e}_{j + 1}\\
& = V \vec{e}_{j + 1}\\
& = \vec{v}_{j + 1}\\
& = W \vec{v}_{j},
\end{align*}
which proves that $W = (V J) M (VJ)$.
\end{proof}

The previous theorem says that each member of $\mathfrak{O}_c(M)$ can be viewed as shifting each column of the doubly infinite, self transpose, unitary matrix $V$ to the right. Equivalently, each element of the conjugate orbit  acts on the  columns of some 
$$V = [\cdots|\vec{v}_{-2} |\vec{v}_{-1} |\vec{v}_0 | \vec{v}_1| \cdots]$$
by $\vec{v}_{j} \mapsto \vec{v}_{j + 1}, j \in \Z$, and extending linearly.

\begin{Example}
If $V$ is a doubly infinite unitary diagonal matrix with unimodular eigenvalues $(\xi_{j})_{j = -\infty}^{\infty}$, i.e., 
$V = [\cdots|\xi_{-1} \vec{e}_{-1}|\xi_{0} \vec{e}_0|\xi_{1} \vec{e}_1|\cdots]$, then certainly $V^{t} = V$. Thus, the unitary shifts
$$W (\xi_{j} \vec{e}_j) = \xi_{j + 1} \vec{e}_{j + 1}, \quad j \in \Z,$$
belong to the conjugate orbit of the bilateral shift.
\end{Example}

\begin{Example}
One might also consider doubly infinite block diagonal matrices $V$ with diagonal blocks $B_{j}$, that is, 
$$V = \operatorname{diag}(\ldots, B_{-2}, B_{-1}, B_{0}, B_{1}, B_{2}, \ldots),$$
where each $B_k \in \mathscr{U}_{n_k}$, $n_k  \in \{1, 2, \ldots\}$, and $B_{k}^{t} = B_{k}$.  Observe that $V$ is unitary and $V^{t} = V$. Thus the unitary operator
$$W \vec{v}_j = \vec{v}_{j + 1}, \quad i \in \Z,$$
where $\vec{v}_j$ is the $j$\textsuperscript{th} column of $V$, belongs to the conjugate orbit of the bilateral shift.
As a specific example, take the above  blocks $B$ to be  Householder matrices $B= I  - 2 (\vec{v} \otimes \vec{v})$, where $\vec{v}$ is a unit vector in some $\C^{n_k}$. These $B$ are known to be unitary and self transpose. Of course, the $B$ appearing as the blocks are not necessarily the same. 
\end{Example}

\begin{Example}
From Proposition \ref{SdfsdfdsfcvvvvVV} and Proposition \ref{e0roiekfgdFF} (and the discussion following), both $M$ and $M^{*}$ belong to $\mathfrak{O}_c(M)$. How are these two operators realized as unitary shifts $\vec{v}_{j} \mapsto \vec{v}_{j + 1}$, where $$V =  [\cdots|\vec{v}_{-2} |\vec{v}_{-1} |\vec{v}_0 | \vec{v}_1| \cdots]$$ is a self transpose unitary matrix? Let $V$ be the doubly infinite matrix whose $j$\textsuperscript{th} column is $\vec{e}_{-j}$, in other words, the doubly infinite Hankel matrix with $1$ along the cross diagonal passing through the $(0, 0)$ position (and zeros elsewhere), and $C$ is the conjugation on $\ell^2(\Z)$ defined by $C \vec{e}_n = \vec{e}_{-n}$ (and extended antilinearly). Then
\begin{align*}
(C M C) \vec{v}_{j} & =  CMC(V \vec{e}_j)\\
& = C M C \vec{e}_{-j}\\
& = C M \vec{e}_{j}\\
& = C \vec{e}_{j + 1}\\
& = \vec{e}_{-j - 1}\\
& = V \vec{e}_{j + 1}\\
& = \vec{v}_{j + 1}.
\end{align*}
Now observe that 
\begin{align*}
C M C \vec{e}_{j} & = C M \vec{e}_{-j}\\
& = C  \vec{e}_{-j + 1}\\
& = \vec{e}_{j - 1}\\
& = M^{*} \vec{e}_{j}.
\end{align*}
In other words, $M^{*}$ can be realized as $\vec{v}_{j} \mapsto \vec{v}_{j + 1}$ and thus belongs to $\mathfrak{O}_c(M)$ (which of course we know already). In a similar way, if $C$ is the conjugation on $\ell^2(\Z)$ defined by $C \vec{e}_{j} = \vec{e}_{j}$ (and extended antilinearly) and $V$ is the doubly infinite identity matrix (which is obviously self transpose), one can check that $(C M C) \vec{v}_{j} = \vec{v}_{j + 1} = M \vec{e}_j$ and $C M C \vec{e}_{j} = \vec{e}_{j + 1}$. In other words, $M$ can be realized as $\vec{v}_{j} \mapsto \vec{v}_{j + 1}$ and thus belongs to $\mathfrak{O}_c(M)$ (which, again, we already know). 
\end{Example}

What about the issue as to when two members of $\mathfrak{O}_c(M)$ are equal? Suppose
$$(V_{1} J) M (V_{1} J) = (V_{2} J) M (V_{2} J).$$
By Proposition \ref{09876tygbhn} and Remark \ref{commuttannt}, this happens precisely when $V_{1} J V_{2} J = V_{1} \overline{V_{2}}$ belongs to the commutant of $M$. The commutant of $M$ (in its matrix form) consists of all doubly infinite Toeplitz matrices (matrices which are constant along each of their diagonals) \cite[p.~358]{MR4545809}. The fact that $V_{1} \overline{V_{2}}$ is unitary, implies that  $V_{1} \overline{V_{2}}$ is a doubly infinite Toeplitz matrix $[\widehat{\phi}(i - j)]_{i, j \in \Z}$ where $\phi \in L^{\infty}(m)$ is  a unimodular. In fact, this Toeplitz matrix is the matrix representation $[M_{\phi}]$ of the multiplication operator $M_{\phi} f = \phi f$ on $L^2(m)$ with respect to the orthonormal basis $(\xi^n)_{n = -\infty}^{\infty}$. In other words,
$$[M_{\phi}] =[ \langle \xi^{i} \phi, \xi^{j}\rangle]_{i, j \in \Z} = [\widehat{\phi}(i - j)]_{i, j \in \Z},$$
where 
$$\widehat{\phi}(k) : = \langle \phi, \xi^{k}\rangle = \int_{\T} f(\xi) \bar \xi^{n} dm(\xi)$$ is the $k$\textsuperscript{th} Fourier coefficient of $\phi$. 
  Putting this all together, we have the following. 

\begin{Corollary}
Suppose that $V_{1}, V_{2}$ are two doubly infinite self transpose unitary matrices. Then the unitary shift operators on $\ell^2(\Z)$ defined by
\begin{equation}\label{bbbbbVoneVtwo}
V_1 \vec{e}_{j} \mapsto V_1 \vec{e}_{j + 1} \; \; \mbox{and} \; \; V_{2} \vec{e}_{j} \mapsto V_{2} \vec{e}_{j + 1}, \quad j \in \Z,
\end{equation}
which are generic members of the orbit of $M$, are equal if and only if
$$V_{1} = V_{2} [\widehat{\phi}(i - j)]_{i, j \in \Z}$$ for some unimodular function $\phi \in L^{\infty}(m)$.
\end{Corollary}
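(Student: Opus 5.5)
The plan is to combine Theorem \ref{0934t4rhhHHCccvvJ} with Proposition \ref{09876tygbhn} and the description of the commutant of $M$, and then do a small transpose manipulation to put the Toeplitz factor on the correct side.

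First, by Theorem \ref{0934t4rhhHHCccvvJ}, the unitary shift $V_k\vec{e}_j \mapsto V_k\vec{e}_{j+1}$ in \eqref{bbbbbVoneVtwo} equals $(V_kJ)M(V_kJ)$, where $C_k := V_kJ$ is a conjugation by Proposition \ref{OShfgfdsfa}. So the two shifts coincide if and only if $C_1 M C_1 = C_2 M C_2$. By Proposition \ref{09876tygbhn} this holds if and only if $C_1C_2 = V_1 J V_2 J = V_1\overline{V_2}$ is a unitary operator in $\{M\}'$. As in the discussion preceding the corollary, $\{M\}'$ consists of the doubly infinite Toeplitz matrices. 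A unitary member of $\{M\}'$ is therefore $T_\phi := [\widehat{\phi}(i-j)]_{i,j\in\Z}$, the matrix of $M_\phi$, and $M_\phi$ is unitary exactly when $\phi \in L^\infty(m)$ is unimodular. Hence the shifts are equal if and only if
$$V_1\overline{V_2} = T_\phi \quad \mbox{for some unimodular } \phi.$$

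Second, I convert this into the stated form $V_1 = V_2 T_\psi$. Transposing the last identity and using $V_1^t = V_1$ together with $(\overline{V_2})^t = V_2^{*}$ gives $V_2^{*}V_1 = T_\phi^t$, that is, $V_1 = V_2\,T_\phi^t$. The transpose $T_\phi^t = [\widehat{\phi}(j-i)]_{i,j\in\Z}$ is again Toeplitz. Specifically, it equals $[\widehat{\psi}(i-j)]$ with $\psi(\xi) := \phi(\bar\xi)$, since $\widehat{\psi}(k) = \widehat{\phi}(-k)$. The function $\psi$ is unimodular because $\xi\mapsto\bar\xi$ preserves $m$.

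Third, the converse runs the same steps backwards. Suppose $V_1 = V_2 [\widehat{\phi}(i-j)]$ with $\phi$ unimodular. Then $V_2^{*}V_1 = T_\phi$, and transposing gives $V_1\overline{V_2} = T_\phi^t = T_\psi$. This is a unitary Toeplitz matrix, so it lies in $\{M\}'$, and Proposition \ref{09876tygbhn} together with Theorem \ref{0934t4rhhHHCccvvJ} gives equality of the two shifts.

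The only genuine subtlety is the side on which the Toeplitz factor appears. The commutant criterion naturally yields $V_1 = T_\phi V_2$ (using $\overline{V_2}V_2 = I$), not $V_1 = V_2T_\phi$. The transpose trick together with the reflection $\phi \mapsto \phi(\bar\xi)$ is what fixes this, and I will verify carefully that the transposed Toeplitz matrix is again the matrix of a unimodular multiplier.
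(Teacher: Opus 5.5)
Your proof is correct and follows essentially the same route as the paper. Theorem \ref{0934t4rhhHHCccvvJ} writes each shift as $(V_kJ)M(V_kJ)$, Proposition \ref{09876tygbhn} reduces equality to $V_1\overline{V_2}\in\{M\}'$, and the unitary elements of the commutant are identified with the Toeplitz matrices of unimodular symbols. Your transpose step with $\psi(\xi)=\phi(\bar\xi)$ adds care the paper omits: its argument literally yields $V_1=T_\phi V_2$ and passes to the stated form $V_1=V_2T_\phi$ without comment.
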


\begin{Remark}\label{QPLAKSJFHG}
One might be somewhat suspicious as to whether the condition that $V_{1} = V_{2} [\widehat{\phi}(i - j)]_{i, j \in \Z}$ for some unimodular function $\phi$ on $\T$ might only hold in a trivial way. To show there is some depth here, let us denote the doubly infinite Toeplitz matrix by  $T = [\widehat{\phi}(i - j)]_{i, j \in \Z}$ (which will be unitary since we are assuming that $\phi$ is unimodular). The conditions $V_1 =  V_2 T$, along with  $V_{1}^{t} = V_{1}$ and $V_{2}^{t} = V_{2}$ will yield $T = \overline{V_2} T^{t}   V_{2}$. So if $V_{2}$ is taken to be the (doubly infinite)  identity matrix $I$ and $V_{1} = T$, we see that $T = T^{t}$. It follows from the structure of a Toeplitz matrix  (constant entries along each diagonal) that $\widehat{\phi}(n) = \widehat{\phi}(-n)$ for all $n \in \Z$. This can indeed happen, for example,  if we define the unimodular function $\phi$ on $\T$ as  $\phi(e^{i \theta}) = 1$ for $-\pi/2 \leq \theta \leq \pi/2$ and $\phi(e^{i \theta})= -1$ when $\pi/2 < \theta < 3 \pi/2$. In this case a calculation shows that 
$$\widehat{\phi}(n)  = -\frac{2 (-1)^n \sin \left(\frac{\pi  n}{2}\right)}{\pi  n}$$
for $n \in \Z \setminus \{0\}$ and $0$ when $n = 0$, 
which indeed has the property that $\widehat{\phi}(n) = \widehat{\phi}(-n)$ for all $n \in \Z$. From \eqref{bbbbbVoneVtwo} these yield the operators 
$\vec{e}_{j} \mapsto \vec{e}_{j + 1}$ (coming from $V_2 = I$) -- which is just the bilateral shift $M$, while the one coming from $V_1$, i.e., $V_{1} \vec{e}_{j} \mapsto V_{1} \vec{e}_{j + 1}$, which thinking of $V_{1}$ as the multiplication operator $M_{\phi} f= \phi f$ on $L^2(m)$, corresponds to the operator $\phi z^j \mapsto \phi z^{j + 1}$. However, using the fact that $\phi$ is unimodular, this is precisely the operator $z^{j} \mapsto z^{j + 1}$, which is the bilateral shift again. Of course one could keep $V_2 = I$ and set $V_1$ to be the doubly infinite Toeplitz matrix corresponding to {\em any} unimodular $\phi$ which satisfies $\phi(e^{i \theta}) = \phi(e^{-i \theta})$ (which will force the condition $\widehat{\phi}(n) = \widehat{\phi}(-n)$ for all $n$). This will yield a class of operators in the orbit of $M$ that are precisely $M$. There are plenty of pairs $V_1$ and $V_2$ of doubly infinite self transpose unitary matrices such that $\overline{V_2} V_1$ is not a Toeplitz matrix. These will correspond to distinct members of the orbit of $M$. 
\end{Remark}

\section{Multiplication operators in the conjugate orbit}

If one wanted to {\em exclude} certain unitary operators on $L^2(m)$ from consideration as members of the conjugate orbit of the bilateral shift $M$, one could make the observation that both $M$ and $M^{*}$ are cyclic ($M$ is certainly $*$-cyclic and so by Bram's theorem, see \cite{MR68129} or \cite[p.~232]{MR1112128}, it must be cyclic, likewise with $M^{*}$). Using Proposition \ref{uuueeeqq}, and the facts that $M$ and $M^{*}$ belong to $\mathfrak{O}_c(M)$, we see that any operator  in the conjugate orbit must also be cyclic. It is also known that the spectrum of $M$, as well as $M^{*}$, is $\T$ \cite[p.~178]{MR4545809} (spectrum of $M_{\phi}$, $\phi \in L^{\infty}(m)$, is its essential range). Again by Proposition \ref{uuueeeqq}, anything in the conjugate orbit must  also have spectrum equal to $\T$. For example, the (unitary) multiplication operator $M_{\phi}$, where $\phi$ is equal to one on the top half of $\T$ and minus one on the bottom half, is not contained in the conjugate orbit of the bilateral shift since its (essential) range is $\{-1, 1\}$ and not $\T$.

 By the cyclicity discussion from the previous paragraph,  it is also the case that in order for $M_{\phi}$ to belong to $\mathfrak{O}_c(M)$,  the essential range of $\phi$ must be $\T$ {\em and} $\phi$ must be injective on a set of full measure on $\T$ \cite{MR361898, MR3292513}.  Thus, for example, if $\phi$ is inner and has multiplicity greater then one (an infinite Blaschke product or maybe a singular inner function for example), then, even though the essential range of these $\phi$ is $\T$, it is certainly  far from being injective on a set of full measure and thus $M_{\phi} \not \in \mathfrak{O}_c(M)$. We will explore such multiplication operators below. The definitive characterization as to when $M_{\phi}$, for general $\phi \in L^{\infty}(m)$ (and unimodular),  is unitarily equivalent to $M^{*}$ depends on the ``multiplicity function'' for $\phi$ and is a delicate measure theory matter \cite{MR526530, MR320797} that we will not get into here. Once the unitary equivalence of $M_{\phi}$ with $M^{*}$ is established, there is also the issue as to when, via Proposition \ref{sdfhjsd9f9ds9dsf9sdf9s9df}, the intertwining operator yielding the unitary equivalence satisfies certain additional properties. Certainly  $M = M_{\xi}$ and $M^{*} = M_{\bar \xi}$ belong to the conjugate orbit of $M$. Are there {\em other} multiplication operators in the conjugate orbit of $M$?  We begin our exploration of this with several examples.

\begin{Example}\label{66yTTxx)99}
On the positive side of the ledger, consider a (Lebesgue) measure preserving  $\alpha: \T \to \T$ such that $\alpha \circ \alpha = \operatorname{id}$. Examples include $\alpha(\xi) = \xi, \alpha(\xi) = - \xi, \alpha(\xi) = \bar \xi$, or, more generally, $\alpha$ is a reflection of $\T$ across any line through the origin. One can check that $(C_{\alpha} f)(\xi) = \bar {f} \circ  \alpha$ defines a conjugation on $L^2(m)$ and a short computation shows that $C_{\alpha} M C_{\alpha} f = M_{\bar \alpha} f$ for all $f \in L^2(m)$. 
Thus, there are multiplication operators, besides $M$ and $M^{*}$, namely $M_{\bar \alpha}$,  that belong to $\mathfrak{O}_c(M)$ but they seem to come from a quite restricted class. 
\end{Example} 

\begin{Example}\label{examplecalculus}
To demonstrate the restrictions on multiplication operators $M_{\phi}$ that belong to the conjugate orbit of $M$, besides the restriction that $\phi$ must be injective almost everywhere and have essential range equal to $\T$, consider any smooth function $\psi: [0, 2 \pi] \to [0, 2 \pi]$ with $\psi(0) = 0, \psi(2 \pi) = 2 \pi$ and $\psi'(t) > 0$ for all $t$. By the inverse function theorem, this implies that $\psi$ is invertible and smooth with $(\psi^{-1})' > 0$. If $\phi(e^{i t}) = e^{i \psi(t)}$, then $\phi$ is unimodular on $\T$ and so the multiplication operator $M_{\phi}$ on $L^2(m)$ is unitary. Also note that $\phi$ is injective and $\phi(\T) = \T$. When does $M_{\phi}$ belong to $\mathfrak{O}_c(M)$? The result here is that $\psi(t) = t$ is the only possibility  and so $M_{\phi}$ is just the bilateral shift  $M$. To see why  this is true, observe that if $M_{\phi} \in \mathfrak{O}_c(M)$, Proposition \ref{sdfhjsd9f9ds9dsf9sdf9s9df} says  there must be a unitary operator $W$ on $L^2(m)$ such that $M_{\phi} W = W M^{*}$ and there must also be a conjugation $C$ on $L^2(m)$ for which $C M C = M^{*}$ and $C W C = W^{*}$. A result from \cite{MR4169409} says that any $C$ for which $C M C = M^{*}$ takes the form $C = C_{u}$, where $u$ is a unimodular function on $\T$ and $C_{u} f = u \bar{f}$. 

If $(X f)(\xi) = f(\bar{\xi})$, then, as discussed earlier,  $X$ is a unitary operator on $L^2(m)$ with $X^{*} = X$ and $X M X^{*} = M^{*}$. 
By a change of variables, note that 
$$Q f := (f \circ  \phi) \sqrt{|\phi'|} =(f \circ  \phi) \sqrt{|\bar \phi'|} $$ defines a unitary operator on $L^2(m)$ such that $M_{\phi} (Q X^{*}) = (Q X^{*}) M^{*}$. 
The two unitary operators $W$ and $Q X^{*}$ both intertwine $M_{\phi}$ and $M^{*}$ and so $$(W^{*} Q X^{*}) M^{*} = M^{*} (W^{*} Q X^{*}).$$  In other words, by taking adjoints of the previous identity, $X Q^{*} W$  is unitary and belongs to the commutant of $M$. Thus, there is a unimodular $g \in L^{\infty}(m)$ for which $X Q^{*} W = M_{g}$. From here it follows  that 
$W = Q X^{*} M_g = Q X M_{g}$ (recall that $X = X^{*}$) and thus  
$$W f = (g \circ  \bar \phi) (f \circ \bar \phi) \sqrt{|\bar \phi'|}.$$
Moreover, a calculation using the fact that $W$ is unitary and thus $W^{*}  = W^{-1}$,  yields
$$W^{*} f = \frac{f \circ \bar\phi^{-1}}{g  \sqrt{|\bar\phi' \circ \bar \phi^{-1}|}}.$$
So, from our discussion above, it boils down to finding a unimodular function $u$ for which $C_{u} W C_{u} = W^{*}$. Applying both sides of this identity to a  given $f \in L^2(m)$, we see that
$$u \cdot (\bar g \circ \bar \phi) (\bar u \circ \bar \phi ) (f \circ \bar \phi) \sqrt{|\bar \phi'|} =  \frac{f \circ \bar \phi^{-1}}{g  \sqrt{|\bar \phi' \circ \bar \phi^{-1}|}}.$$ Setting  $f$ to be the constant function equal to one on $\T$ yields 
$$u \cdot (\bar g \circ \bar \phi) (\bar u \circ \bar \phi)  \sqrt{|\bar \phi'|} = \frac{1}{g  \sqrt{|\bar \phi' \circ \bar \phi^{-1}|}}.$$
Taking absolute values and using the fact that  the functions $g$ and $u$ are unimodular, we conclude that 
$$\sqrt{|\phi'|} \sqrt{|\bar \phi' \circ \bar \phi^{-1}|} = 1.$$
The above says that 
$\psi' \cdot (\psi' \circ \psi^{-1}) = 1$. An argument with inverse functions and the chain rule will show that $\psi'(t) = (\psi^{-1}(t))'$ for all $0 \leq t \leq 2\pi$. From the fact that $\psi(0) = \psi^{-1}(0) = 0$, it follows that $\psi(t) = \psi^{-1}(t)$. 
Finally observe that if $a, b \in (0, 2 \pi)$ with $a < b$ but $\psi(a) = b$ and $\psi(b) = a$, then by Lagrange's mean value theorem, there is a $c \in (a, b)$ 
$$\psi'(c) = \frac{\psi(a) - \psi(b)}{a - b} = -1.$$
However, we are assuming that $\psi' > 0$ on $[0, 2 \pi]$ and thus  conclude that $\psi(t) = t$ for all $t \in [0, 2 \pi]$.

We end this example with two comments.  The first is that $M_{\phi}$, for the symbol $\phi$ defined at the beginning of this example, is indeed unitary equivalent to $M$ (their multiplicity functions are equal -- see \cite{MR526530, MR320797}) and so $M_{\phi}$ belongs to $\mathfrak{O}(M)$ but not $\mathfrak{O}_c(M)$ unless $\phi(e^{i t}) = e^{i t}$. Second, we note that $(C f)(\xi) = \bar{f}(-\xi)$ defines  a conjugation on $L^2(m)$ such that $C M C = -M^{*}$ and $(C f)(\xi) = \bar{f}(-\overline{\xi})$ defines a conjugation such that $C M C = -M$. Thus, $-M$ and $-M^{*}$ belong to $\mathfrak{O}_c(M)$. For a general unitary operator $U$, it is usually not the case that $-U$ or $-U^{*}$ belong to the conjugate orbit of $U$ due to spectral restrictions (recall Corollary \ref{Sdfsdfdsf11112}). 
\end{Example}

\begin{Example}
The previous example can be extended to any smooth $\phi(e^{i \theta}) = e^{i \psi(\theta)}$, where $\psi: [\eta, \eta + 2 \pi] \to [\eta, \eta + 2 \pi]$, $\psi' > 0$, $\psi(\eta) = \eta$, and $\psi(\eta + 2 \pi) = \eta + 2 \pi$. In this case. $M_{\phi}$ belongs to $\mathfrak{O}_c(M)$ only if $\psi(\theta) = \theta + \eta$. This observation will exclude symbols such as 
$$\phi(\xi) = \frac{a - \xi}{1 - \bar a \xi}, \quad a \in \D \setminus \{0\},$$
from consideration as $M_{\phi} \in \mathfrak{O}_c(M)$. This is because 
$\psi(\theta) = \arg \phi(e^{i \theta})$, which will never be of the form $\psi(\theta) = \theta + \eta$ (recall that $a \not = 0$). 
\end{Example}

\begin{Example}
Does  $M_{\phi}$ belong to $\mathfrak{O}_c(M)$ when  $\phi(e^{i t}) = e^{i \psi(t)}$ and $\psi$ is a strictly increasing continuous function on $[0, 2 \pi]$ with  $\psi(0) = 0$ and $\psi(2 \pi) = 2 \pi$, like in Example \ref{examplecalculus}, but with the added twist that  $\psi'(t) = 0$ for almost every $t$? Recall that such Cantor-like functions exist \cite{MR197639}.  {\em Prima facie}, $M_{\phi}$ is not immediately eliminated from consideration for membership in the conjugate orbit of $M$, since it is injective and has range equal to  $\T$. However, it does not belong to $\mathfrak{O}_c(M)$ since it is not unitarily equivalent to $M^{*}$ (recall Corollary \ref{Sdfsdfdsf11112}). This is because  $\psi' = 0$ almost everywhere and thus $\psi$  has no absolutely continuous part. This implies that the spectral measure corresponding to $M_{\phi}$ is singular, where it certainly needs to be absolutely continuous for $M_{\phi}$ to be unitarily equivalent to $M^{*}$. 
\end{Example}

These examples all lead us to the possible conclusion that $M_{\phi} \in \mathfrak{O}_c(M)$ only under special circumstances. We confirm this observation with the following theorem.

\begin{Theorem}\label{thmmainmult}
If $\phi$ is a unimodular Borel function on $\T$, then $M_{\phi} \in \mathfrak{O}_c(M)$ if and only if the following three conditions are satisfied:
\begin{enumerate}
\item $\bar \phi \circ  \bar \phi = \operatorname{id}$ almost everywhere on $\T$;
\item $m \circ \bar \phi^{-1}$ is mutually absolutely continuous with respect to $m$;
\item If 
${\displaystyle h = \frac{d (m \circ \bar \phi^{-1})}{dm},}$
then $h \cdot (h \circ \bar \phi) = 1$ almost everywhere on $\T$.
\end{enumerate}
\end{Theorem}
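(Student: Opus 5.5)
The plan is to work through Proposition~\ref{sdfhjsd9f9ds9dsf9sdf9s9df} with the conjugation $J f = \bar f$ on $L^2(m)$, which satisfies $J M J = M_{\bar \xi} = M^{*}$. By that proposition, $M_{\phi} \in \mathfrak{O}_c(M)$ if and only if there is a unitary $W$ on $L^2(m)$ with
$$M_{\phi} W = W M^{*} \quad \mbox{and} \quad J W J = W^{*}.$$
Throughout I write $\psi := \bar \phi$. Both directions then reduce to studying weighted composition operators $W f = w \cdot (f \circ \psi)$. For such $W$ the intertwining is automatic: $W(\bar\xi f) = \bar\psi\, w\,(f\circ\psi) = \phi\, W f$.

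\emph{Sufficiency.} Assume (a)--(c). Condition (a) says $\psi \circ \psi = \operatorname{id}$ a.e., and (b) says $m \circ \psi^{-1} = h\, m$ with $h > 0$ a.e. Hence $\int F \circ \psi \, dm = \int F h \, dm$ for nonnegative Borel $F$. Define
$$W f := \sqrt{h}\,(f \circ \psi).$$
To see that $W$ is isometric, use (c) in the form $h = 1/(h\circ\psi)$, and then the change of variables formula:
$$\int |W f|^2 \, dm = \int \big(|f|^2/h\big)\circ \psi \, dm = \int \frac{|f|^2}{h}\, h \, dm = \|f\|^2 .$$
Next, $W^2 f = \sqrt{h}\,\sqrt{h\circ\psi}\,(f\circ\psi\circ\psi) = f$ by (a) and (c), so $W$ is onto and $W^{*} = W^{-1} = W$. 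Since $\sqrt h$ is real, $J W J = W = W^{*}$. The intertwining holds as noted above. Proposition~\ref{sdfhjsd9f9ds9dsf9sdf9s9df} then gives $M_{\phi} \in \mathfrak{O}_c(M)$. Concretely, the conjugation is $C f = W J f = \sqrt{h}\,(\bar f \circ \psi)$.

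\emph{Necessity.} Take $W$ as above. From $W^{*} M_{\phi} W = M_{\bar\xi}$ and the Borel functional calculus, $W^{*} M_{F\circ\phi} W = M_{F\circ \bar\xi}$ for all bounded Borel $F$. Consequently, $\operatorname{Ad} W$ maps the maximal abelian algebra $\{M_g : g \in L^\infty(m)\}$ onto the von Neumann algebra generated by $M_{\phi}$. That algebra must therefore be maximal abelian, so it equals all of $L^\infty(m)$. Because $\T$ is a standard Borel space, this spatial $*$-automorphism of $L^\infty(m)$ is induced by an a.e.\ invertible nonsingular point map. Matching $M_{\bar\xi}$ with $M_\phi$ identifies that map as $\psi = \bar\phi$, so $W f = w\,(f\circ\psi)$ for some measurable $w$. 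Unitarity of $W$ then forces $m\circ\psi^{-1} \sim m$, which is (b), and $|w|^2 = (1/h)\circ\psi$.

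I would then write out both $J W J f = \bar w\,(f\circ\psi)$ and $W^{*} f = (f\circ\psi^{-1})/\overline{(w\circ\psi^{-1})}$, exactly as in Example~\ref{examplecalculus}. Equating these on $f = \xi^n$ for $n \in \Z$ gives $\psi = \psi^{-1}$ a.e., which is (a). Comparing moduli on $f = 1$ gives $|w|^2\,|w\circ\psi|^2 = 1$, which combined with $|w|^2 = 1/(h\circ\psi)$ yields (c).

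The main obstacle I expect is the step asserting that $W$ must be a weighted composition operator, that is, passing from an abstract unitary implementing an isomorphism of $L^\infty(m)$ to a point map. I would handle this with the standard realization of normal $*$-isomorphisms between $L^\infty$ spaces of standard measure spaces by measurable point transformations, up to a unimodular multiplier. The rest is bookkeeping with Radon--Nikodym derivatives.
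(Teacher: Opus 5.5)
Your argument is correct. The sufficiency half is the paper's proof in different packaging. Your self-adjoint unitary $Wf=\sqrt h\,(f\circ\bar\phi)$ equals $CJ$ for the paper's conjugation $Cf=\sqrt h\,(\bar f\circ\bar\phi)$. Checking $JWJ=W^{*}$ and $M_\phi W=WM^{*}$ and then invoking Proposition~\ref{sdfhjsd9f9ds9dsf9sdf9s9df} is equivalent to the paper's direct check that $C$ is a conjugation with $CMC=M_\phi$.

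The necessity half ends exactly as the paper does: compare $JWJ$ with $W^{-1}$ at $f=1$, take moduli, then use $f=\xi$. Where you differ is in how you reach the weighted-composition form of $W$.
\begin{itemize}
\item The paper imports a.e.\ injectivity of $\bar\phi$ from the cyclicity discussion and the cited literature. It obtains (b) from the uniqueness part of the spectral theorem. It then combines $W(\bar\phi^{m}g)=\xi^{m}Wg$ with an explicit unitary $Q$ to show that $WQ$ is multiplication by a unimodular $k$.
\item You get all of this from one source, the functional calculus plus maximal abelianness. These force $\mathcal{W}^{*}(M_\phi)=L^\infty(m)$, and that yields a.e.\ invertibility of $\bar\phi$ and (b) internally, with no outside input.
\end{itemize}

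A side benefit of your route is that the normalization is right before (a) is known. Unitarity gives $|w|^{2}=1/(h\circ\bar\phi)$. By contrast, the paper's $Qf=(f\circ\bar\phi)\sqrt h$ satisfies $\|Qf\|^{2}=\int|f\circ\bar\phi\circ\bar\phi|^{2}\,dm$, so it is only known to be isometric once $\bar\phi\circ\bar\phi=\operatorname{id}$. The weight there should be $(h\circ\bar\phi)^{-1/2}$, which is exactly your $|w|$.

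Two small remarks.
\begin{itemize}
\item The step you flag as the main obstacle needs no point-realization theorem. Writing $G=F\circ\bar\xi$, your functional-calculus identity reads $WM_G=M_{G\circ\bar\phi}W$. Hence $WG=(G\circ\bar\phi)\,W(1)$ for bounded Borel $G$. Isometry says the measure $|w|^{2}\,dm$ pushed forward by $\bar\phi$ is $m$. So $f\mapsto w\,(f\circ\bar\phi)$ is a well-defined isometry of $L^{2}(m)$ agreeing with $W$ on $L^\infty(m)$, and therefore everywhere.
\item The adjoint should read $W^{*}f=W^{-1}f=(f\circ\bar\phi^{-1})/(w\circ\bar\phi^{-1})$, with no complex conjugate on $w$. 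The slip is harmless: afterwards you only use the $f=1$ relation to cancel the weight and to compare moduli.
\end{itemize}
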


\begin{proof}
Suppose that $\phi$ is a unomodular Borel function  with $M_{\phi} \in \mathfrak{O}_c(M)$. From the discussion above, we see that  $\bar \phi$ is invertible on a set of full measure on $\T$. The measure $m \circ \bar \phi^{-1}$ is a well defined Borel measure on $\T$ and the operator
$$Y: L^2(m) \to L^2(m \circ \bar \phi^{-1}), \quad Y f = f \circ  \bar \phi^{-1},$$ is an isometric isomorphism. Indeed, there is the well-known change of variables formula
\begin{equation}\label{kkh@YYY}
\int_{\T} |Y f|^2 d(m \circ \bar \phi^{-1}) = \int_{\T} |f|^2 d m, \quad f \in L^2(m).
\end{equation}
We direct the reader to \cite[p.~162-163]{MR33869} for the technical details of the Borel measure theory discussion above. 
Moreover, $(Y M_{\bar \phi} w)(\xi) = \xi (Y w)(\xi)$, $w \in L^2(m)$. This says that the spectral measure for $M_{\bar \phi}$ on $L^2(m)$ is $m \circ \phi^{-1}$, in other words, $M_{\bar \phi}$ on $L^2(m)$ is unitarily equivalent to $M_{\xi}$ on $L^2(m \circ \bar \phi^{-1})$. By the spectral theorem (the uniqueness part), the two measures $m$ and $m \circ \bar \phi^{-1}$ on $\T$ are mutually absolutely continuous and thus condition $(b)$ is satisfied. Therefore,  there is a nonnegative $h \in L^1(m)$ (with $1/h \in L^1(m)$) such that 
$$d (m \circ \bar \phi^{-1}) = h\,dm.$$ Thus,  the operator 
\begin{equation}\label{kkk9900888GG}
Q: L^2(m) \to L^2(m), \quad Q f  = (f \circ \bar \phi) \sqrt{h},
\end{equation}
 is unitary since it is isometric via 
\begin{align*}
\int_{\T} |Q f|^2 dm & = \int_{\T} |f \circ \bar \phi|^2 h dm\\
& = \int_{\T} |f \circ \bar \phi|^2 d(m \circ \bar \phi^{-1})\\
& = \int_{\T} |f|^2 dm.
\end{align*}
Moreover, if $g \in L^2(m)$, then, similar to the computation in \eqref{kkh@YYY}, 
$$f := \frac{g \circ \bar \phi^{-1}}{\sqrt{h \circ \bar \phi^{-1}}}$$
belongs to $L^2(m)$ and $Q f = g$.

Since we are assuming that $M_{\phi} \in \mathfrak{O}_c(M)$, Proposition \ref{sdfhjsd9f9ds9dsf9sdf9s9df} says there must be a unitary operator $W$ on $L^2(m)$ such 
$$J W J = W^{*} \; \; \mbox{and} \; \; M_{\phi} = W M^{*} W^*,$$
where $J f = \bar f$ on $L^2(m)$. Recall that $J M J = M^{*}$. The first condition in the previous line says that 
\begin{equation}\label{BBBvvvCCC}
\overline{W(\bar f)} = W^{*} f, \quad f \in L^2(m),
\end{equation}
while the second condition yields 
$$W^{*}(\phi f)(\xi) = \bar \xi (W^{*} f)(\xi).$$
Using \eqref{BBBvvvCCC} and taking complex conjugates gives us 
$$W(\bar \phi \bar f)= \xi (W \bar f), \quad f \in L^2(m),$$
and setting $g = \bar f$ says that 
\begin{equation}\label{kkkBv12229}
W g = \bar \xi W(\bar \phi g), \quad g \in L^2(m).
\end{equation}
Iterate this previous identity as
\begin{align*}
W g  & = \bar \xi W(\bar \phi g)\\
& = \bar \xi^2 W(\bar \phi^2 g)\\
& = \bar \xi^3 W(\bar \phi^3 g)\\
 &  \; \; \vdots \\
& = \bar \xi^n W(\bar \phi^n g), \quad n \geq 0.
\end{align*}
Replace $g$ with $\phi^n g$ in the previous equation, and recall that $\phi$ is unimodular,  to see that 
$$W(\phi^n g) = \bar \xi^n W(\bar \phi^n \phi^n g)= \bar \xi^n W g, \quad n \geq 0.$$
Combining these two facts give us the identity 
$$W(\bar \phi^m g) = \xi^m W g, \quad m \in \Z.$$
With $g = \sqrt{h}$, which belongs to $L^2(m)$,  the above says that 
$$W((p \circ \bar \phi) \sqrt{h}) = p \cdot ( W(\sqrt{h})$$ for all trigonometric polynomials $p$, that is,  finite linear combinations of the monomials $\{\xi^{m}: m \in \Z\}$.  Using the unitary $Q$ from \eqref{kkk9900888GG}, we can rewrite the identity from the previous line as
$$W Q p  = p \cdot W(\sqrt{h})$$
for all trigonometric polynomials $p$, and hence all $f \in L^2(m)$. Since $W$ and $Q$ are both unitary on $L^2(m)$, the above, via the density of the trigonometric polynomials in $L^2(m)$, says that  the multiplication operator $f   \mapsto f \cdot W(\sqrt{h})$ is unitary on $L^2(m)$ and thus 
$$ k := W(\sqrt{h})$$ must be a unimodular function on $\T$. 

Inverting the formula in \eqref{kkk9900888GG} reveals that 
$$Q^{-1} f  =  \frac{f \circ \bar \phi^{-1}}{\sqrt{h \circ\bar  \phi^{-1}}}$$
and thus
\begin{equation}\label{BBBBB}
W f  = (Q^{-1} f) \cdot k =  \frac{f \circ \bar \phi^{-1}}{\sqrt{h \circ \bar \phi^{-1}}}k.
\end{equation}
From the previous line it follows that 
\begin{equation}\label{11111}
W^{*} f  = W^{-1} f = (f \circ \bar \phi) \cdot (\overline{k \circ \bar \phi}) \cdot \sqrt{h}.
\end{equation}
Now bring in \eqref{BBBvvvCCC} to get
\begin{equation}\label{22222}
W^{*} f  = \overline{W\bar f} =  \frac{f \circ  \bar \phi^{-1}}{\sqrt{h \circ \bar  \phi^{-1}}} \bar k
\end{equation}
and thus, equating \eqref{11111} and \eqref{22222}, yields
\begin{equation}\label{AAAAAA}
 (f \circ \bar \phi) \cdot (\overline{k \circ \bar \phi}) \cdot \sqrt{h} =  \frac{f \circ  \bar \phi^{-1}}{\sqrt{h \circ \bar  \phi^{-1}}} \bar k.
\end{equation}
Plug in the constant function $f \equiv 1$ into both sides of \eqref{AAAAAA} to get 
$$ \overline{k \circ \bar \phi} \cdot  \sqrt{h} = \frac{1}{\sqrt{h \circ \bar \phi^{-1}}} \bar k.$$
Recall that $k$ is a unomudular function and $h$ is nonnegative and so, taking the modulus of both sides of the above, we see that 
$$\sqrt{h} = \frac{1}{\sqrt{h \circ \bar \phi^{-1}}},$$
which verifies condition $(c)$, and also gives us the identity
$$  \overline{k \circ \bar \phi}=  \bar k.$$
Plugging these last two facts into \eqref{AAAAAA} yields 
$$f \circ \bar \phi = f \circ \bar \phi^{-1}, \quad f \in L^2(m),$$
and setting $f(\xi) = \xi$ in the above, we see that 
$\bar \phi = \bar \phi^{-1}.$
This verifies condition $(a)$.

Conversely, suppose that conditions $(a), (b), (c)$ are satisfied. One can check that  the mapping 
$$C: L^2(m) \to L^2(m), \quad C f = \sqrt{h} \cdot (\bar f \circ \bar \phi),$$
satisfies the three properties of a conjugation (antilinear, isometric, and involutive). Note that  $C$ is antilinear. Furthermore,  
\begin{align*}
\int_{\T} |C f|^2 dm & = \int_{\T} h |\bar f \circ \bar \phi|^2 dm\\
& = \int_{\T} h |f \circ \bar \phi|^2 dm\\
& = \int_{\T} |Q f|^2 dm\\
& = \int_{\T} |f|^2 dm
\end{align*}
for each $f \in L^2(m)$. 
Thus $C$ maps $L^2(m)$ to $L^2(m)$ isometrically.  Moreover, for each $f \in L^2(m)$,
\begin{align*}
C^2 f & = C (\sqrt{h} \cdot (\bar f \circ \bar \phi))\\
& = \sqrt{h} \cdot (\sqrt{h} \circ \bar \phi) \cdot (f \circ \bar \phi \circ \bar \phi)\\
& = \sqrt{h} \cdot (\sqrt{h} \circ \bar \phi )\cdot f\\
& = f. 
\end{align*}
Thus, $C$ is involutive and so $C$ is a conjugation on $L^2(m)$. 

Finally, observe that 
\begin{align*}
C M C f & = C M  (\sqrt{h} \cdot (\bar f \circ \bar\phi))\\
& = C (\xi \cdot \sqrt{h} \cdot (\bar f \circ \bar \phi))\\
& = (\bar  \xi \circ \bar \phi) \sqrt{h} \cdot (\sqrt{h} \circ \bar \phi) (f \circ \bar \phi \circ \bar \phi)\\
& = \phi f,
\end{align*}
and thus $M_{\phi} \in \mathfrak{O}_c(M)$. 
\end{proof}

We remark that in Example \ref{66yTTxx)99}, the mapping $\alpha$ was assumed to be measure preserving and convolutive and thus $h \equiv 1$ in the previous theorem. 

\begin{Example}
Here we construct a specific example of a unimodular and Borel measurable $\phi$ on $\T$ satisfying conditions $(a), (b), (c)$ of Theorem \ref{thmmainmult} for which $h$ is not identically one and thus $\phi$ is not measure preserving. Let $\omega:[0,\pi] \to [0,\pi]$ be a smooth strictly increasing function with $\omega(0)=0$ and $\omega(\pi)=\pi$. A simple example is $\omega(t) = t^2/\pi$. Now define $\varphi: \mathbb{T} \to \mathbb{T}$ by
\[
\varphi(e^{it}) = \left\{
\begin{array}{ccc}
e^{-i\omega(t)} & \mbox{if} & 0 \leq t \leq \pi,\\
& & \\
e^{i\omega^{-1}(-t)} & \mbox{if} & -\pi \leq t \leq 0.\\
\end{array} \right.
\]
Observe that $\varphi$ is a continuous bijection on $\mathbb{T}$,  $\varphi(1)=1$, $\varphi(-1)=-1$, and $\varphi$ flips the upper and lower half circles. The result here is the following: $m \circ \varphi^{-1}$ us mutually absolutely continuous with respect to $m$ and
\[
h(e^{i\theta}) := \frac{d (m \circ \varphi^{-1})}{dm}(e^{i\theta}) = 
\left\{
\begin{array}{ccc}
\omega'(\theta), & \mbox{if} & 0 < \theta < \pi,\\
& & \\
{\displaystyle \frac{1}{\omega'(\omega^{-1}(-\theta))}}, & \mbox{if} & -\pi < \theta < 0.\\
\end{array} \right.
\]
Here is the proof. 
Fix $\zeta = e^{i\theta}$ in the upper half circle, i.e., $0<\theta<\pi$, and, for a fixed $\varepsilon > 0$, set
\[
I_\varepsilon := \{ e^{it} : \theta-\varepsilon < t < \theta+\varepsilon \}.
\]
Then 
\[
\varphi^{-1}(I_\varepsilon) = \{ e^{is} : -\omega(\theta+\varepsilon) < s < -\omega(\theta-\varepsilon) \}.
\]
Hence, by definition,
\[
(m \circ \varphi^{-1})(I_\varepsilon) = m(\varphi^{-1}(I_\varepsilon)) = \frac{\omega(\theta+\varepsilon)-\omega(\theta-\varepsilon)}{2\pi}.
\]
Therefore, by Besicovitch's theorem \cite{MR14414}, the Radon--Nikodym derivative can be computed as a limit by 
\begin{align*}
\frac{d(m \circ \varphi^{-1})}{dm}(e^{i\theta}) &= \lim_{\varepsilon \to 0} \frac{(m \circ \varphi^{-1})(I_\varepsilon) }{m(I_\varepsilon)}\\
&= \lim_{\varepsilon \to 0} \frac{\omega(\theta+\varepsilon)-\omega(\theta-\varepsilon)}{2\varepsilon}\\
&= \omega'(\theta), \qquad 0 < \theta < \pi.
\end{align*}

In a similar way, if $\zeta = e^{i\theta}$ in the lower half circle, i.e., $\pi<\theta<0$, we consider
\[
I_\varepsilon = \{ e^{it} : \theta-\varepsilon < t < \theta+\varepsilon \}.
\]
Then
\[
\varphi^{-1}(I_\varepsilon) = \{ e^{is} : \omega^{-1}(-\theta-\varepsilon) < s < \omega^{-1}(-\theta+\varepsilon) \}.
\]
Hence, by definition,
\[
(m \circ \varphi^{-1})(I_\varepsilon) = m(\varphi^{-1}(I_\varepsilon)) = \frac{\omega^{-1}(\theta+\varepsilon)-\omega^{-1}(\theta-\varepsilon)}{2\pi}.
\]
Therefore, again by Besicovitch's theorem, 
\begin{align*}
\frac{d(m \circ \varphi^{-1})}{dm}(e^{i\theta}) &= \lim_{\varepsilon \to 0} \frac{(m \circ \varphi^{-1})(I_\varepsilon) }{m(I_\varepsilon)}\\
&= \lim_{\varepsilon \to 0} \frac{\omega^{-1}(-\theta+\varepsilon)-\omega^{-1}(-\theta-\varepsilon)}{2\varepsilon}\\
&= \big(\omega^{-1}\big)'(-\theta)\\
& = \frac{1}{\omega'(\omega^{-1}(-\theta))}, \qquad -\pi < \theta < 0,
\end{align*}
which completes the proof. 

Note that, for $0<\theta<\pi$,
\[
h(\varphi(e^{i\theta})) = h(e^{-i\omega(\theta)}) = \frac{1}{\omega'(\omega^{-1}(\omega(\theta)))} = \frac{1}{\omega'(\theta)} = \frac{1}{h(e^{i\theta})},
\]
and similarly, for $-\pi<\theta<0$,
\[
h(\varphi(e^{i\theta})) = h(e^{i\omega^{-1}(-\theta)}) =  \omega'(\omega^{-1}(-\theta)) = \frac{1}{h(e^{i\theta})}.
\]
Hence, for almost all $\theta$,
\[
h(e^{i\theta}) \cdot h(\varphi(e^{i\theta})) = 1.
\]

In the special case $\omega(t) = t^2/\pi$, we obtain
\[
\frac{d(m \circ \varphi^{-1})}{dm}(e^{i\theta}) =
\left\{
\begin{array}{ccc}
{\displaystyle \frac{2\theta}{\pi}}, & \mbox{if} & 0 < \theta < \pi,\\
& & \\
{\displaystyle \frac{\sqrt{\pi}}{2\sqrt{-\theta}}}, & \mbox{if} & -\pi < \theta < 0.\\
\end{array} \right.
\]
\end{Example}

\begin{Remark}
The focus on this chapter is on the bilateral shift $M$ on $L^2(m)$. However, one can also fashion a version of Theorem \ref{thmmainmult} for $M_{\xi}$ on $L^2(\mu)$, where $\mu$ is a finite positive Borel measure on $\T$. Here one can give necessary and sufficient conditions as to when $M_{\phi}$, $\phi \in L^{\infty}(\mu)$, belongs to the conjugate orbit on $M_{\xi}$ on $L^2(\mu)$. The statement and proof is mainly the same except $m$ is replaced by $\mu$. 
\end{Remark}

\begin{Example}\label{Ealphabetatsst}
It is worth mentioning a member of $\mathfrak{O}_c(M)$ that combines some of the  previous examples. Let $\alpha: \T \to \T$ be  (Lebesgue) measure preserving with $\alpha \circ \alpha = \operatorname{id}$, $\beta: \T \to \{-1, 1\}$ (Lebesgue measurable) such that $\beta \circ \alpha = \beta$, and $s, t \in \R$ with $s^2 + t^2 = 1$. A specific example of such an $\alpha$ and $\beta$ is  $\alpha(\xi) = \bar \xi$  for all $\xi \in \T$ and $\beta(\xi) = 1$ when $- \pi/2 \leq \arg \xi  \leq \pi/2$ and $\beta(\xi) = -1$ when $\pi/2 < \arg \xi < 3\pi/2$.

Fixing the above parameters $\alpha, \beta, s, t$,  define the mapping $C$ on $L^2(m)$ by 
$$C f = s (\bar f \circ \alpha) + i t \beta \bar f$$ and note that $C$ is antilinear. Moreover, since $s$ and $t$ are real, 
\begin{align*}
C^2 f & = C(s (\bar f \circ \alpha) + i t \beta \bar f)\\
& = s C(\bar f \circ \alpha) - i t C(\beta \bar f)\\
& = s \big(s  (f \circ (\alpha \circ \alpha))+ i t \beta  (f \circ \alpha)\big)- i t \big(s (\beta \circ \alpha) (f \circ \alpha) + i t \beta^2 f\big)\\
& = s^2 f + i s t \beta (f \circ\alpha) - i s t \beta (f \circ \alpha) + t^2 f\\
& = (s^2 + t^2) f\\
& = f,
\end{align*}
and so $C$ is involutive. Furthermore, 
\begin{align*}
\|C f\|^2 & = \|s (\bar f \circ \alpha) + i t \beta \bar f\|^2\\
& = s^2 \|f \circ \alpha\|^2 + \big\langle s (\bar f \circ \alpha), i t \beta \bar f\big\rangle + \big\langle i t \beta \bar f, s (\bar f \circ \alpha)\big\rangle + t^2 \|f\|^2\\
& = s^2 \|f\|^2 + t^2 \|f\|^2 + i s t\Big(\int_{\T} \beta \bar f (f \circ \alpha)dm - \int_{\T}\beta (\bar f \circ \alpha) f dm\Big).
\end{align*}
Using the facts that $\beta \circ \alpha = \beta$, $\alpha \circ \alpha = \operatorname{id}$, and $\alpha$ is measure preserving, we see that 
$$\int_{\T} \beta \bar f (f \circ \alpha)dm - \int_{\T}  \beta (\bar f \circ \alpha) f dm = 0$$ and so 
$$\|C f\|^2 = (s^2 + t^2) \|f\|^2 = \|f\|^2.$$ This makes $C$ isometric and hence a conjugation on $L^2(m)$. 
From here we conclude that $C M C$ is a member of $\mathfrak{O}_c(M)$ and has the formula 
\begin{align*}
C M C f &= C M (s (\bar f \circ \alpha) + i \beta \bar f)\\
& = C (s M (\bar f \circ \alpha) + i t M (\beta \bar f))\\
& = s C (M (\bar f \circ \alpha) ) - i t C(M (\beta \bar f))\\
& = s \Big(s \bar \alpha f + i t \beta M^{*}(f \circ \alpha)\Big) - i t \Big(s (\beta \circ \alpha) \bar \alpha (f \circ \alpha) + i t \beta^2 M^{*} f\Big)\\
& = s^2 \bar \alpha f + t^2 M^{*} f + i s t (\beta M^{*} f + \beta \bar \alpha (f \circ \alpha))\\
& = (s^2 M_{\bar \alpha} + t^2 M^{*}) f + i s t \beta (M_{\bar \alpha} - M^{*}) (f \circ \alpha).
\end{align*}
\end{Example}

\begin{Remark}\label{morethanone}
 If $U$ is unitary with $\sigma(U) = \{\lambda\}$, then, as observed earlier in Remark \ref{singlepoint},  $\mathfrak{O}_c(U) = \{\overline{\lambda} I\}$ is just one operator. If $\sigma(U)$ has more than one point, then the conjugate orbit is quite rich. Indeed, one of the many versions of the spectral theorem \cite[p.~304]{ConwayFA} says that a general unitary operator $U$ is unitarity equivalent to 
$$\bigoplus_{n \geq 1} (M_{\xi}, L^2(\mu|_{\Delta_n})),$$
where $M_{\xi} f = \xi f$ on $L^2(\mu|_{\Delta_n})$, $\mu$ is a positive finite measure on $\C$, $\Delta_1 = \sigma(U)$, and $(\Delta_n)_{n \geq 1}$ is a decreasing sequence of Borel subsets of $\sigma(U)$. The assumption that $\sigma(U)$ has at least two distinct elements, shows that not all of the summands $(M_{\xi}, L^2(\mu|_{\Delta_n}))$ are trivial. Moreover, by the previous examples (Example \ref{MxionL63} in particular), the conjugate orbit of  each of the $M_{\xi}$ on $L^2(\mu|_{\Delta_n})$ (when it is nontrivial)  is quite abundant. We will see another analysis of the complexity of the conjugate orbit in the next section. 
\end{Remark}

\section{Shifts of higher multiplicity}

One can generalize the previous section and consider, for a fixed $n \in \N \cup \{\infty\}$, the unitary operator 
$$M^{(n)} := \bigoplus_{j = 1}^{n} M$$
on the Hilbert space  $\ell^{2}(\Z)^{(n)}$, where 
$$\ell^{2}(\Z)^{(n)} := \Big\{(\vec{a}_k)_{k = 1}^{n}: \|(\vec{a}_k)_{k = 1}^{n}\|_{\ell^2(\Z)^{(n)}} :=  \Big(\sum_{k = 1}^{n}\|\vec{a}_k\|^{2}_{\ell^2(\Z)}\Big)^{\frac{1}{2}} < \infty\Big\}.$$
Recall the definition of the sequence space $\ell^2(\Z)$ from \eqref{sdfg7sd7fsdf7sdfaaaQQQQ}.
 Note that $M^{(n)}$ acts on a vector $(\vec{a}_k)_{k = 1}^{n}$ in $\ell^{2}(\Z)^{(n)}$ by 
$$M^{(n)} \big((\vec{a}_k)_{k = 1}^{n} \big) := (M \vec{a}_k)_{k = 1}^{n}$$ and is called the {\em bilateral shift of multiplicity $n$.} 
One often thinks of $M^{(n)}$ in its block matrix form  acting on column vectors 
$$\begin{bmatrix}\vec{a}_1 \\ \vec{a}_2 \\ \vec{a}_{3} \\ \vdots\end{bmatrix} \in \ell^2(\Z)^{(n)}$$
as 
$$M^{(n)} = \begin{bmatrix} 
M & 0 & 0 & 0 & 0 & \cdots \\
0  & M & 0 & 0 & 0 & \cdots\\
0 & 0 & M & 0 & 0 & \cdots\\
0 & 0 & 0 & M & 0 & \cdots\\
0 & 0 & 0 & 0 & M & \cdots\\[-4pt]
0 & 0 & 0 & 0 & 0 & \ddots\\
\end{bmatrix}.$$
In this section we make some remarks concerning  the conjugate orbit of $M^{(n)}$. A skeptic might wonder if we are generalizing the discussion from the previous section just for the sake of generalization. Through the following examples, let us make the case that some ``naturally appearing'' unitary operators are actually unitarily equivalent to bilateral shifts and thus fall under the purview of the current discussion. 

\begin{Example}
For a bounded analytic function $u$ on $\D$, well-known theory of function spaces \cite{Duren} says that 
$$u(\xi) := \lim_{r \to 1^{-}} u(r \xi)$$ exists for almost every $\xi \in \T$. 
A bounded analytic function $u$ is {\em inner} if the above boundary function $u(\xi)$ is unimodular almost everywhere. 
As examined in \cite{MR4866296}, consider an inner function $u$ with degree $n$ (either the number of Blaschke factors if $u$ is a finite Blaschke product or $\infty$ if $u$ contains an infinite Blaschke factor or a singular inner factor in its factorization). We refer the reader to \cite{Duren} for an explanation of the above terms (Blaschke product, singular inner function, etc.). The multiplication operator  $M_{u} f = u f$ is unitary on $L^2(m)$ and moreover, 
$$L^{2}(m) = \bigoplus_{j = -\infty}^{\infty} u^j \mathcal{K}_{u},$$
where $\mathcal{K}_{u} := H^2 \ominus u H^2$ is the {\em model space} corresponding to the inner function $u$.
Here $H^2$ is the closed subspace of $L^2(m)$ consisting of the $L^2(m)$ functions whose negative Fourier coefficients vanish. 
 It is known that the dimension of $\mathcal{K}_{u}$ is equal to the degree of $u$ \cite[p.~117]{MR3526203}.  In the parlance of operator theory, the model space $\mathcal{K}_{u}$ is a {\em wandering subspace} for the multiplication operator $M_u$ on $L^2(m)$. It follows, see Proposition \ref{sd9f9sdi8f7777yyyTTROOOOO} below, that $M_{u}$ is unitarily equivalent to $M^{(n)}$. So, for example, if $u$ is a Blaschke product of order two, then $M_{u}$ is unitarily equivalent to $M^{(2)}$, while if $u$ is an infinite Blaschke product or a singular inner function, 
then $M_{u}$ is unitarily equivalent to $M^{(\infty)}$. As a specific example where the wandering subspace is finite dimensional, suppose $u(\xi) = \xi^2$ (an example of a Blaschke product of order two). Then $\mathcal{K}_{u} = H^2 \ominus u H^2$ is equal to the two dimensional subspace $\operatorname{span}\{1, \xi\}$. Thus, 
$$L^2(m) = \bigoplus_{j = -\infty}^{\infty} \xi^{2j} \operatorname{span}\{1, \xi\}.$$
From here we can see that $M_{u}$ on $L^2(m)$ is unitarily equivalent to $M \oplus M$ on $L^2(m) \oplus L^2(m)$. 
\end{Example}

\begin{Example}
The translation operator $(U f)(x) = f(x - 1)$  on $L^2(\R)$  is unitary and if $\mathcal{M} = \chi_{[0, 1]} L^2(\R)$, then
$$L^2(\R) = \bigoplus_{j = -\infty}^{\infty} U^j \mathcal{M}.$$
Analogous to the previous example, $\mathcal{M}$ is an infinite dimensional wandering subspace for $U$. It follows, again see Proposition \ref{sd9f9sdi8f7777yyyTTROOOOO} below,  that $U$ is unitarily equivalent to $M^{(\infty)}$. 
\end{Example}

\begin{Example}
The dilation operator  $(U f)(x) = \sqrt{2} f(2x)$ on $L^2(\R)$ is unitary. If 
$$\psi(x) = \begin{cases}
\phantom{-}1 & \mbox{for $0 \leq x < \frac{1}{2}$},\\
-1 & \mbox{for $\frac{1}{2} \leq x \leq 1$},\\
\phantom{-}0 & \mbox{otherwise},
\end{cases}$$
(Haar) wavelet theory \cite{MR1008470} (see also \cite{MR1117215}) says that the family of  functions 
$$\psi_{n, k}(x) := 2^{\frac{n}{2}} \psi(2^{n} x - k), \quad n, k \in \Z,$$
form an orthonormal basis for $L^2(\R)$. 
Consider the subspaces 
$$\mathcal{W}_{\ell} := \overline{\operatorname{span}}\{\psi_{\ell, k}: k \in \Z\}, \quad \ell \in \Z.$$
Note that  $\mathcal{W}_{\ell} \perp \mathcal{W}_{\ell'}$ for all  distinct $\ell, \ell' \in \Z$ 
and $U \mathcal{W}_{\ell} = \mathcal{W}_{\ell + 1}$ for all $\ell \in \Z$. Thus,
$$L^2(\R) = \bigoplus_{j = -\infty}^{\infty} U^{j} \mathcal{W}_{0}.$$
Hence,  $\mathcal{W}_{0}$ is an infinite dimensional wandering subspace for $U$ and 
so $U$ is unitarily equivalent to $M^{(\infty)}$.
\end{Example}

The driver for connecting these above examples with a bilateral shift of higher multiplicity comes from the following general result. 

\begin{Proposition}\label{sd9f9sdi8f7777yyyTTROOOOO}
Suppose that $\mathcal{H}$ is a separable Hilbert space and $\mathcal{M}$ is an $n$ dimensional subspace of $\mathcal{H}$, where $n \in \N \cup \{\infty\}$. If $U$ is a unitary operator on $\mathcal{H}$ satisfying 
\begin{equation}\label{wanderingU}
\mathcal{H} = \bigoplus_{j  = -\infty}^{\infty} U^{j} \mathcal{M},
\end{equation}
then $U$ is unitarily equivalent to $M^{(n)}$. 
\end{Proposition}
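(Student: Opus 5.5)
The plan is to write down an explicit unitary from $\mathcal{H}$ onto $\ell^2(\Z)^{(n)}$ built from an orthonormal basis of the wandering subspace $\mathcal{M}$, and check that it intertwines $U$ with $M^{(n)}$. Fix an orthonormal basis $(\vec{m}_k)_{k=1}^{n}$ of $\mathcal{M}$ (finite or countable, since $\mathcal{H}$ is separable). Because $U$ is unitary, each $U^j$ maps $\mathcal{M}$ isometrically onto $U^j\mathcal{M}$. Hence $(U^j \vec{m}_k)_{k=1}^{n}$ is an orthonormal basis of $U^j\mathcal{M}$. The orthogonal decomposition \eqref{wanderingU} then says the doubly indexed family
$$\{U^j \vec{m}_k : j \in \Z,\ 1 \leq k \leq n\}$$
is an orthonormal basis for $\mathcal{H}$. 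Orthonormality within a fixed $j$ comes from $U^j$ being isometric, orthogonality across distinct $j$ is built into the direct sum, and completeness comes from the sum exhausting $\mathcal{H}$.

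Next I define $\Phi: \mathcal{H} \to \ell^2(\Z)^{(n)}$ on this basis by sending $U^j\vec{m}_k$ to the vector whose $k$-th component is $\vec{e}_j \in \ell^2(\Z)$ and whose other components are zero. Write $\vec{E}_{j,k}$ for this target vector. The family $\{\vec{E}_{j,k}\}$ is plainly an orthonormal basis of $\ell^2(\Z)^{(n)}$, so $\Phi$ maps one orthonormal basis bijectively onto another. Therefore $\Phi$ extends linearly and continuously to a unitary operator, namely
$$\Phi\Big(\sum_{j,k} a_{j,k} U^j \vec{m}_k\Big) = \sum_{j,k} a_{j,k} \vec{E}_{j,k},$$
which is well defined by Parseval's identity.

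Finally I check the intertwining relation $\Phi U = M^{(n)} \Phi$ on basis vectors. We have $\Phi U (U^j \vec{m}_k) = \Phi(U^{j+1}\vec{m}_k) = \vec{E}_{j+1,k}$. On the other side, $M^{(n)}\vec{E}_{j,k}$ places $M\vec{e}_j = \vec{e}_{j+1}$ in slot $k$, which is again $\vec{E}_{j+1,k}$. Both sides are bounded and linear and agree on an orthonormal basis, so they agree everywhere. Thus $U = \Phi^{*} M^{(n)} \Phi$, and $U$ is unitarily equivalent to $M^{(n)}$.

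There is no serious obstacle. The only point needing care is the first step, namely that the union of the transported bases is complete and orthonormal; this is exactly what the hypothesis \eqref{wanderingU} encodes. The case $n = \infty$ uses separability so that $\mathcal{M}$ has a countable orthonormal basis, matching the countable direct sum defining $\ell^2(\Z)^{(\infty)}$.
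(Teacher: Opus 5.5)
Your proof is correct and is essentially the paper's argument. The paper likewise fixes an orthonormal basis $(\vec{u}_k)_{k=1}^{n}$ of $\mathcal{M}$, expands each $\vec{x}$ as $\sum_{j,k} a_{j,k} U^{j}\vec{u}_k$, and maps $\vec{x}$ to the coefficient sequences $\bigl((a_{j,k})_{j\in\Z}\bigr)_{k=1}^{n}$. That map is exactly your $\Phi$, and the paper verifies the intertwining with $M^{(n)}$ by the same index shift; your basis-to-basis phrasing simply makes the unitarity of the map slightly more transparent than the paper's norm computation.
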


\begin{proof}
We follow the proof from \cite{MR4866296}. Let $(\vec{u}_k)_{k = 1}^{n}$, $1 \leq n \leq \infty$,  be an orthonormal basis for $\mathcal{M}$. By our assumption on $\mathcal{H}$ in \eqref{wanderingU}, each $\vec{x} \in \mathcal{H}$ can be written as 
$$\vec{x} = \sum_{j = -\infty}^{\infty} U^{j} \vec{x}_j, \; \; \mbox{where} \; \; \vec{x}_{j} = \sum_{k = 1}^{n} a_{j, k} \vec{u}_k \in \mathcal{M}.$$
Thus, 
$$\vec{x} = \sum_{j = -\infty}^{\infty} \Big(\sum_{k = 1}^{n} a_{j, k} U^{j} \vec{u}_{k}\Big).$$
Now observe that since $U^{j} \vec{x}_j \perp U^{j'} \vec{x}_{j'}$ (and $U$ is unitary) we have 
\begin{align*}
\|\vec{x}\|^2 & = \sum_{j = -\infty}^{\infty} \|\vec{x}_{j}\|^2\\
& = \sum_{j = -\infty}^{\infty} \Big\|\sum_{k = 1}^{\infty} a_{j, k} \vec{u}_j \Big\|^2\\
& = \sum_{j = -\infty}^{\infty} \sum_{k = 1}^{\infty} |a_{j, k}|^2.
\end{align*}
Thus, for each $1 \leq k \leq n$, the vector $\vec{a}_{k} = (a_{j, k})_{j = -\infty}^{\infty}$ belongs to $\ell^{2}(\Z)$ and 
the map 
$$\Gamma: \mathcal{H} \to \ell^2(\Z)^{(n)}, \quad 
\Gamma(\vec{x}) = (\vec{a}_{k})_{k = 1}^{n},$$ is an isometric isomorphism that intertwines $U$ with $M^{(n)}$. Indeed, for any $\vec{x} \in \mathcal{H}$ we have  
\begin{align*}
\Gamma U \vec{x} & = \Gamma U\Big(\sum_{j = -\infty}^{\infty} \sum_{k = 1}^{n} a_{j, k} U^{j} \vec{u}_k\Big)\\
& = \Gamma \Big(\sum_{j = -\infty}^{\infty} \sum_{k = 1}^{n} a_{j, k} U^{j + 1} \vec{u}_k\Big)\\
& = \Gamma \Big(\sum_{j = -\infty}^{\infty} \sum_{k = 1}^{n} a_{j - 1, k} U^{j} \vec{u}_k\Big)\\
& = ((a_{j - 1, k})_{j = -\infty}^{\infty})_{k = 1}^{n}\\
& = (M \vec{a}_{k})_{k = 1}^{n}\\
& = M^{(n)} \Gamma \vec{x},
\end{align*}
which establishes the unitary equivalence of $U$ and $M^{(n)}$. 
\end{proof}

What type of operators belong to $\mathfrak{O}_c(M^{(n)})$? This seems to be quite difficult and we do not have an adequate description yet. But we can present some rich classes of examples.

\begin{Example}Certainly, the following class of unitary operators on $\ell^2(\Z)^{(n)}$ belong to the conjugate orbit of $M^{(n)}$: 
$$W := \begin{bmatrix} 
W_1 & 0 & 0 & 0 & 0 & \cdots \\
0  & W_2 & 0 & 0 & 0 & \cdots\\
0 & 0 & W_3 & 0 & 0 & \cdots\\
0 & 0 & 0 & W_4 & 0 & \cdots\\
0 & 0 & 0 & 0 & W_5 & \cdots\\[-4pt]
0 & 0 & 0 & 0 & 0 & \ddots
\end{bmatrix},$$
where each $W_{k}$ is a unitary shift on $\ell^2(\Z)$ formed from a doubly infinite self transpose unitary matrix
$$V_{k} = [\cdots|\vec{v}_{k, -2}|\vec{v}_{k, -1}|\vec{v}_{k, 0}|\vec{v}_{k, 1}|\vec{v}_{k, 2}|\cdots]$$
with  
$$W_{k} \vec{v}_{k, j} = \vec{v}_{k, j + 1}, \quad j \in \Z.$$
One can see this by using Theorem \ref{0934t4rhhHHCccvvJ} to find conjugations $C_{k}$, $1 \leq k \leq n$, on $\ell^2(\Z)$ such that $C_{k} M C_{k} = W_{k}$ for the given unitary shifts $W_k$ as above. Then 
\begin{equation}\label{diagConj}
C := \begin{bmatrix} 
C_1 & 0 & 0 & 0 & 0 & \cdots \\
0  & C_2 & 0 & 0 & 0 & \cdots\\
0 & 0 & C_3 & 0 & 0 & \cdots\\
0 & 0 & 0 & C_4 & 0 & \cdots\\
0 & 0 & 0 & 0 & C_5 & \cdots\\[-4pt]
0 & 0 & 0 & 0 & 0 & \ddots
\end{bmatrix},
\end{equation}
satisfies  $$C ((\vec{a}_{k})_{k = 1}^{n}) = (C_{k} \vec{a}_{k})_{k = 1}^{n}$$
and defines a conjugation on $\ell^{2}(\Z)^{(n)}$. Furthermore, since  $M^{(n)} = \bigoplus_{k = 1}^{n} M$, we conclude that  
$C M^{(n)} C = W$. 
\end{Example}

When $n = 1$, Theorem \ref{0934t4rhhHHCccvvJ} says that the conjugate orbit $\mathfrak{O}_c(M^{(1)})$ consists {\em precisely}  of these unitary shifts. When $n > 1$, there are many {\em other }operators in $\mathfrak{O}_c(M^{(n)})$. The main reason is that not every conjugation on $\ell^{2}(\Z)^{(n)}$ can be diagonalized as in \eqref{diagConj}. Indeed, the paper \cite{Ko03062019} shows how one can, at least in the $n = 2$ case, for a general Hilbert space $\mathcal{H}$, create conjugations $C$ on $\mathcal{H} \oplus \mathcal{H}$ as 
$$C = \begin{bmatrix} C_{1} & C_{2}\\ C_{3} & C_{4}\end{bmatrix},$$
where each $C_j$ above is an antilinear operator on $\mathcal{H}$  (not necessarily a conjugation) which satisfies 
$$C_{1} = C_{1}^{\star}, \quad C_{4} = C_{4}^{\star},  \quad C_{3} = C_{2}^{\star}.$$
$$C_{1} C_{1}^{\star} + C_{2} C_{2}^{\star} = I, \quad C_{2} C_{2}^{\star} + C_{4} C_{4}^{\star} = I,$$
$$C_{2}^{\star} C_{1} + C_{4} C_{2}^{\star} = 0.$$
In the above, $C_{j}^{\star}$ denotes the antilinear adjoint (to be formally defined below in \eqref{oooOOkkkKKKK}) as the antilinear operator on $\mathcal{H}$ satisfying 
$$\langle C_{j} \vec{x}, \vec{y}\rangle = \overline{\langle \vec{x}, C_{j}^{\star} \vec{y}\rangle} \; \; \mbox{for all} \; \;  \vec{x}, \vec{y} \in \mathcal{H}.$$ The above matrix definition of a conjugation  is understood as 
$$C \begin{bmatrix} \vec{a} \\ \vec{b} \end{bmatrix}  =  \begin{bmatrix} C_{1} & C_{2}\\ C_{3} & C_{4}\end{bmatrix}  \begin{bmatrix} \vec{a} \\ \vec{b} \end{bmatrix} =  \begin{bmatrix}C_{1} \vec{a}  + C_{2} \vec{b}\\ C_{3} \vec{a} + C_{4} \vec{b} \end{bmatrix}, \; \; \mbox{where} \; \; \begin{bmatrix} \vec{a} \\ \vec{b} \end{bmatrix}  \in \mathcal{H} \oplus \mathcal{H}.$$

Certainly the conjugation arising from \eqref{diagConj}, with $C_{1}, C_{4}$ conjugations and $C_{2}  = C_{3} = 0$, define conjugations on $\ell^{2}(\Z)$. However, there are many others and thus $\mathfrak{O}_c(M^{(2)})$ consists of far more unitary operators than just $W_{1} \oplus W_2$, with $W_{1}, W_{2}$  unitary shifts on $\ell^2(\Z)$. As one can imagine, the situation only becomes more complicated as $n$ gets larger. 

\begin{Example}
Considering the bilateral shift as the multiplication operator $(M f)(\xi) = \xi f(\xi)$ on $L^2(m)$ there is another rich class of members of the conjugate orbit of $M^{(n)}$. For each  $1 \leq j \leq n$, let $\alpha_j$ be a (Lebesgue) measure preserving map of $\T$ with $\alpha_j \circ \alpha_j = \operatorname{id}$. As before, define the conjugation $C_{\alpha_j} = \bar f \circ \alpha_j$ on $L^2(m)$. With $C$ defined as the block diagonal conjugation 
$$C = \operatorname{diag}(C_{\alpha_1}, C_{\alpha_2}, C_{\alpha_3}, \ldots)$$ on $L^2(m)^{(n)}$, we see from our calculations in Example \ref{66yTTxx)99} that 
$$C M^{(n)} C = \operatorname{diag}(M_{\bar \alpha_1}, M_{\bar \alpha_2}, M_{\bar \alpha_3}, \ldots).$$
\end{Example}

We point out a  final comment concerning the complexity of the conjugate orbit of a general unitary operator that also provides a glimmer of hope that one can obtain get some partial understanding of the complexity involved.  A  result from \cite{MR4866296}, which uses some of the finer renditions of the spectral theorem,  says that for a given unitary operator $U$ on $\mathcal{H}$, there are invariant subspaces $\mathcal{K}_{1}, \mathcal{K}_{2}, \mathcal{K}_{3}$ for $U$ such that $\mathcal{H} = \mathcal{K}_{1} \bigoplus \mathcal{K}_{2} \bigoplus \mathcal{K}_{3}$. Moreover $U|_{\mathcal{K}_1}$ is a bilateral shift in the sense of Proposition \ref{sd9f9sdi8f7777yyyTTROOOOO}, $U_{\mathcal{K}_2}$ is diagonalizable in the sense of Corollary \ref{C27}, $U|_{\mathcal{K}_{2} \oplus \mathcal{K}_{3}}$ contains no bilateral shift part of $U$, and $\mathcal{K}_{3}$ contains no eigenvectors of $U$. In fact, the operator $U|_{\mathcal{K}_{3}}$ is sometimes known as the ``chaotic part'' of $U$ and is not completely understood. Our previous work shows the complexity of the conjugate orbits of each piece $U|_{\mathcal{K}_1}$ and $U|_{\mathcal{K}_{2}}$ and so we have some organizational tools to help us understand the possibilities of (some of)  the operators in the conjugate orbit of $U$. Also worth pointing out is a older rendition of the discussion above that decomposes a unitary operator into a direct sum of a bilateral shift and a pure part (a unitary where every invariant subspace is reducing) \cite[p.~62-63]{MR270172}. 

\chapter{Real and Complex Hilbert Spaces from a Unitary Operator}

The remainder of this paper will develop a matrix model for a generic unitary operator that involves real Hilbert spaces. We do this in order to obtain a tangible model for its conjugate orbit. Since this topic, which goes under the general heading of {\em complexification},  might be unfamiliar to the most likely audience for this paper, we will be quite generous with the exposition, details, and  examples. 

\section{A real Hilbert space from a unitary operator}

If $U$ is a unitary operator on a separable Hilbert $\mathcal{H}$ with spectral measure $E(\cdot)$, in other words
\begin{equation}\label{spectralrep}
U = \int_{\T} \xi dE(\xi)
\end{equation}
\cite[p.~269]{ConwayFA},
 a rendition of the spectral theorem yields an orthonormal sequence of vectors $(\vec{x}_k)_{k \geq 1}$ in $\mathcal{H}$ such that
\begin{equation}\label{Hhhkkk}
\mathcal{H} = \bigoplus_{k \geq 1} \mathcal{H}(\vec{x}_k),
\end{equation} where
$$\mathcal{H}(\vec{x}_k) := \overline{\operatorname{span}_{\C}}\{E(\Omega) \vec{x}_k: \Omega \in \mathcal{B}\}.$$
In the above, $\mathcal{B}$ denotes the Borel subsets of $\T$ and $\overline{\operatorname{span}_{\C}}$ denotes the closed {\em complex} linear span in $\mathcal{H}$. We will be a bit pedantic here with the subscript $\C$ above since at some point below we will  examine the {\em real} linear span of a set of vectors (and denote it by $\operatorname{span}_{\R}$ -- see \eqref{ahhhhhkkkkxxxrrr} below).
One can produce such a sequence $(\vec{x}_k)_{k \geq 1}$ satisfying \eqref{Hhhkkk}  as follows (see \cite[p.~300]{ConwayFA}). For a fixed unit vector $\vec{x} \in \mathcal{H}$ let
$$\mathcal{H}(\vec{x}) := \overline{\mathcal{W}^{*}(U) \vec{x}},$$
where $\mathcal{W}^{*}(U)$ is the von Neumann algebra generated by $U$ (the weak or, what turns out to be the same, the  strong operator closure of $p(U)$, where $p$ is a trigonometric polynomial). Since $U|_{\mathcal{H}(\vec{x})}$ is a $\ast$-cyclic unitary operator in that
$$\overline{\operatorname{span}_{\C}}\{ U^{n} \vec{x}: n \in \Z\} = \mathcal{H}(\vec{x}),$$ 
note that $U^{-n} = U^{*n}$, 
  basic facts about spectral measures, namely the important fact that $E(\Omega) \in \mathcal{W}^{*}(U)$ for every $\Omega \in \mathcal{B}$ \cite[p.~292]{ConwayFA},  will show that
\begin{equation}\label{88hhHCCNNccc12}
\mathcal{H}(\vec{x}) := \overline{\operatorname{span}_{\C}}\{E(\Omega) \vec{x}: \Omega \in \mathcal{B}\}.
\end{equation}
A Zorn's lemma argument produces a sequence $(\vec{x}_k)_{k \geq 1}$ in $\mathcal{H}$ such that
$\mathcal{H}(\vec{x}_j) \perp \mathcal{H}(\vec{x}_{k})$ for $j \not = k$, and, by  the maximality of the sequence $(\vec{x}_k)_{k \geq 1}$, we obtain \eqref{Hhhkkk}.

\begin{Example}\label{9erufgjdefght}
If $U = I$, the identity operator on a separable Hilbert space $\mathcal{H}$, choose any orthonormal basis $(\vec{x}_k)_{k \geq 1}$ for $\mathcal{H}$ and note that
$\mathcal{H}(\vec{x}_{k}) = \overline{\mathcal{W}^{*}(I) \vec{x}_k}  = \C \vec{x}_k$
and, trivially, $\mathcal{H} = \bigoplus_{k \geq 1} \C \vec{x}_k.$
\end{Example}

\begin{Example}\label{nnBBvv665}
Suppose $U = \operatorname{diag}(\xi_1, \xi_2, \xi_3, \ldots)$ is a diagonal unitary operator with $\xi_j \not = \xi_k$ for all $j, k \geq 1$ acting on $\ell^2  = \ell^2(\N)$ via $U \vec{e}_{j} = \xi_j \vec{e}_j$. Then any  unit vector $\vec{x} = (x_j)_{j \geq 1} \in \ell^2$, with $x_j \in \C \setminus \{0\}$ for all $j$, is a $\ast$-cyclic vector for $U$ and so $\mathcal{H}(\vec{x}) = \ell^2$.
\end{Example}

\begin{Remark}
It is important to emphasize here that the decomposition in \eqref{Hhhkkk} is not unique or even canonical. For instance, in the previous example, we can set $\vec{x}_1 = \vec{e}_1$ and $\vec{x}_2 = (0, x_2, x_3, x_4, \ldots) \in \ell^2$, $x_j \not = 0$. Then $\ell^2 = \mathcal{H}(\vec{x}_1) \oplus \mathcal{H}(\vec{x}_2)$.
\end{Remark}

\begin{Example}
If $U = \operatorname{diag}(\xi_1, \ldots, \xi_n)$ is an $n \times n$ diagonal unitary matrix with $\xi_j \not = \xi_k$, $1 \leq j, k \leq n$ then, as with Example \ref{nnBBvv665} with $\vec{x} = (1, 1, \ldots, 1)$, we see that  $\mathcal{H}(\vec{x}) = \C^{n}$. 
\end{Example}

\begin{Example}\label{bliexample9}
When $(U f)(\xi) = \xi f(\xi)$ is the bilateral shift on $L^2(m)$, the density of the trigonometric polynomials in $L^2(m)$ and the fact that $(U^{*} f)(\xi) = \bar \xi f(\xi)$, imply that $\chi : = \chi_{\T}$, the characteristic function on $\T$, is a $\ast$-cyclic vector for $U$ and so
$L^2(m) = \mathcal{H}(\chi) = \overline{\operatorname{span}_{\C}}\{U^n \chi: n \in \Z\}$.
\end{Example}

\begin{Example}
Suppose that $U$ is a unitary operator on $\mathcal{H}$ and $(\xi_j)_{j \geq 1}$ are {\em distinct} unimodular constants with
$\mathcal{N}_j := \ker (U  - \xi_j I) \not=  \{\vec{0}\}$ and
$\mathcal{H} = \bigoplus_{j \geq 1} \mathcal{N}_j.$
Of course the eigenspaces of a unitary operator are automatically mutually orthogonal. The above assumption is just a matter of the eigenspaces spanning  $\mathcal{H}$. Since $U|_{\mathcal{N}_j} = \xi_j I|_{\mathcal{N}_j}$ we can apply Example \ref{9erufgjdefght} to each $U|_{\mathcal{N}_j}$ to write
$\mathcal{N}_{j} = \bigoplus_{k \geq 1} \mathcal{H}(\vec{x}_k),$ where the number of summands, equivalently, the number of orthonormal basis elements $\vec{x}_k$ for $\mathcal{N}_j$,  will be the dimension of $\mathcal{N}_j$. Now reindex to
write $\mathcal{H} = \bigoplus_{n \geq 1} \mathcal{H}(\vec{x}_n)$.
\end{Example}

\begin{Example}
One could revisit Example \ref{nnBBvv665} and combine it with the analysis from the previous example to see that
$$\mathcal{N}_j = \ker(U - \xi_j I) = \C \vec{e}_j = \overline{\operatorname{span}_{\C}}\{U^{n} \vec{e}_j: n \in \Z\} =  \mathcal{H}(\vec{e}_j)$$ and
$\ell^2 = \bigoplus_{j \geq 1} \mathcal{H}(\vec{e}_j)$.
\end{Example}

\begin{Example}\label{FTHkkk}
Suppose that  $U = \mathcal{F}$ is the Fourier-Plancherel transform on $L^2(\R)$ from \eqref{FTFTFT}. As discussed in Example \ref{nknjdhfkhfghHJHJHJ},
$$L^2(\R) = \ker (\mathcal{F} - I) \oplus \ker (\mathcal{F} + I) \oplus \ker (\mathcal{F} - i I) \oplus \ker(\mathcal{F} + i I).$$
From \eqref{Fteigenv-aces} recall that 
\begin{align*}
\ker (\mathcal{F} - I) & = \overline{\operatorname{span}_{\C}}\{H_{4n}: n \geq 0\},\\
\ker (\mathcal{F} + i I) & =   \overline{\operatorname{span}_{\C}}\{H_{4n + 1}: n \geq 0\},\\
\ker (\mathcal{F} + I) & =   \overline{\operatorname{span}_{\C}}\{H_{4n + 2}: n \geq 0\},\\
\ker (\mathcal{F} - i I) & =   \overline{\operatorname{span}_{\C}}\{H_{4n + 3}: n \geq 0\}.
\end{align*}
Since $(H_k)_{k \geq 0}$ is already an orthonormal basis of eigenvectors  for $\mathcal{F}$, then
$L^2(\R) = \bigoplus_{k \geq 0} \mathcal{H}(H_k)$, where $\mathcal{H}(H_k) = \C H_k$.
\end{Example}

\begin{Example}\label{HTksdfsf}
Suppose that $U = \mathscr{H}$ is the Hilbert transform on $L^2(\R)$ from \eqref{Hilberttransformmmm}. As with the previous example, using the orthonormal basis of eigenvectors $(f_n)_{n \in \Z}$ from Example \ref{ldfgjdfjg888uuUU}, we can write $L^2(\R) = \bigoplus_{n \in \Z} \mathcal{H}(f_n)$, where $\mathcal{H}(f_n) = \C f_n$.
\end{Example}

With these specific examples in mind, let us return to the general case and our decomposition from \eqref{Hhhkkk}. For each $k \geq 1$, let
\begin{equation}\label{ahhhhhkkkkxxxrrr}
\mathcal{H}(\vec{x}_k)_{\R} := \overline{\operatorname{span}_{\R}}\{E(\Omega) \vec{x}_{k}: \Omega \in \mathcal{B}\}.
\end{equation}
 This is the closed {\em real} linear span of the vectors $\{E(\Omega) \vec{x}_k: \Omega \in \mathcal{B}\}$.
Notice that 
$$\mathcal{H}(\vec{x}_k) = \overline{\operatorname{span}_{\C}} \{E(\Omega) \vec{x}_k: \Omega \in \mathcal{B}\},$$
defined earlier in \eqref{88hhHCCNNccc12}, is the closed 
the {\em complex} linear span. Take note of the subscripts $\R$ and $\C$ in the above. 

  Observe that for any $a_1, a_2 \in \R$ and $\Omega_1, \Omega_2 \in \mathcal{B}$ we can use standard facts about the orthogonal projections $E(\Omega)$ (they are selfadjoint and idempotent) to see that 
\begin{align*}
\langle a_1 E(\Omega_1) \vec{x}_k, a_2 E(\Omega_2) \vec{x}_k\rangle & = a_1 a_2 \langle E(\Omega_1) \vec{x}_k, E(\Omega_2) \vec{x}_k\rangle\\
& = a_1 a_2 \langle E(\Omega_2) E(\Omega_1) \vec{x}_k, \vec{x}_k\rangle\\
& = a_1 a_2 \langle E(\Omega_1 \cap \Omega_2) \vec{x}_k, \vec{x}_k\rangle\\
& = a_1 a_2  \langle E(\Omega_1 \cap\Omega_2) \vec{x}_k,  E(\Omega_1 \cap \Omega_2)\vec{x}_k\rangle\\
& = a_1 a_2 \|E(\Omega_1 \cap \Omega_2) \vec{x}_k\|^2\\
& \in \R.
\end{align*}
Thus, each $\mathcal{H}(\vec{x}_k)_{\R}$ is a {\em real} Hilbert space (a vector space with $\R$ as the base field) with {\em real} inner product. In other words, 
\begin{equation}\label{reallll9}
\langle \vec{x}, \vec{y}\rangle \in \R \; \; \mbox{for every} \; \; \vec{x}, \vec{y} \in \mathcal{H}(\vec{x}_k)_{\R}.
\end{equation}

Now define
\begin{equation}\label{HrrRR}
\mathcal{H}_{\R} := \bigoplus_{k \geq 1} \mathcal{H}(\vec{x}_k)_{\R}
\end{equation}
 and notice that since $\mathcal{H}(\vec{x}_j)_{\R} \perp \mathcal{H}(\vec{x}_k)_{\R}$ when $j \not = k$,  we conclude that $\mathcal{H}_{\R}$ is a real Hilbert space with real inner product
 \begin{equation}\label{55ttreaaallll}
\langle \vec{x}, \vec{y}\rangle =\sum_{\ell \geq 1} a_{\ell} b_{\ell},
\end{equation}
where $(\vec{u}_{\ell})_{\ell \geq 1}$ is an orthonormal basis for $\mathcal{H}$ with 
\begin{equation}\label{dfgjdfg88810ooO}
\vec{u}_{\ell} \in  \overline{\operatorname{span}_{\R}}\{E(\Omega) \vec{x}_{k}: \Omega \in \mathcal{B}\}
\end{equation} for some $k$ (this is possible by \eqref{ahhhhhkkkkxxxrrr}) and, of course,
$$\vec{x} = \sum_{\ell \geq 1} a_{\ell} \vec{u}_{\ell}, \quad \vec{y} = \sum_{\ell \geq 1} b_{\ell} \vec{u}_{\ell}, \quad a_{\ell}, b_{\ell} \in \R.$$

\begin{Example}\label{bsdlfjnbvdccvbVSDf}
Let us continue our discussion from  Example \ref{bliexample9}, where the unitary operator $(U f)(\xi) = \xi f(\xi)$ is the bilateral shift on $L^2(m)$. Here
$L^2(m) = \overline{\operatorname{span}_{\C}} \{\chi_{\Omega}: \Omega \in \mathcal{B}\}$ and so 
$$L^2(m)_{\R} = \overline{\operatorname{span}_{\R}}\{\chi_{\Omega}: \Omega \in \mathcal{B}\} = L^{2}_{\R}(m),$$
the real-valued functions from $L^2(m)$.
\end{Example}

\begin{Example}\label{werfg6yhbbBNJKJHGJKIUJH}
Continuing with Example \ref{nnBBvv665}, suppose that 
$U = \operatorname{diag}(\xi_1, \xi_2, \xi_3, \ldots)$ is a diagonal unitary operator on $\ell^2$ with $\xi_j \not = \xi_k$ for all $j \not = k$. If $ \vec{x} = (x_j)_{j \geq 1} \in \ell^2$ with $x_j \in \R \setminus \{0\}$ for all $j$, then, $\mathcal{H}(\vec{x}) = \ell^2$ and thus 
$$\ell^{2}_{\R} = \{\vec{a} = (a_n)_{n \geq 1} \in \ell^2: \mbox{$a_j \in \R \; $ for all $j \geq 1$}\},$$
in other words, the real sequences in $\ell^2$. If $\vec{x} \in \ell^2$ were chosen so that the entries of $\vec{x}$ were all nonzero, {\em but not necessarily real}, then even though $\mathcal{H}(\vec{x}) = \ell^2$, the space $\mathcal{H}(\vec{x})_{\R}$ would be the set of vectors $\vec{y} = (y_j)_{j \geq 1} \in \ell^2$ such that $y_j = a_j x_j$ ($a_j \in \R$), which will be a different space than $\ell^2_{\R}$ since sequences in $\mathcal{H}(\vec{x})_{\R}$ might have some complex-valued entries. 
\end{Example}

\begin{Example}\label{exxarrrrrnnnn}
The previous example applies to unitary diagonal matrices $$U = \operatorname{diag}(\xi_1, \cdots, \xi_n), \quad   \xi_j \not = \xi_k,$$ with $\vec{x} = (1, 1, \ldots, 1),$ where one can see that $\mathcal{H}(\vec{x}) = \C^{n}$ and  $\mathcal{H}(\vec{x})_{\R} = \R^n$.
\end{Example}

\begin{Example}\label{FT666554}
Suppose that $U = \mathcal{F}$ is the Fourier--Plancherel transform on $L^2(\R)$ from \eqref{FTFTFT}. As discussed in Example \ref{FTHkkk}, with $(H_n)_{n \geq 0}$ being the normalized Hermite functions, we have $L^2(\R) = \bigoplus_{n \geq 0} \C H_n$. Since each Hermite function $H_n$ is real-valued, we have $$L^2(\R)_{\R} = \bigoplus_{n \geq 0} \R H_n$$ and thus $L^2(\R)_{\R} = L^{2}_{\R}(\R)$, the real-valued functions from $L^2(\R)$.
\end{Example}

\begin{Example}\label{HT00sdfspPPIPI}
Suppose that $U = \mathscr{H}$ is the Hilbert transform on $L^2(\R)$ from \eqref{Hilberttransformmmm}. As with Example \ref{HTksdfsf}, we see in this case that 
$$L^{2}(\R)_{\R} = \bigoplus_{n\in \Z} \R f_n.$$ Here each $f_n$ is complex valued and so, unlike the previous example with the Fourier--Plancherel transform, $L^2(\R)_{\R}$, even though it is a real vector space,  is not a space of real-valued functions.
\end{Example}

\section{A complex Hilbert space from a unitary operator}

For a fixed unitary operator $U$ on $\mathcal{H}$, we define the {\em real} vector space $\mathfrak{R}(\mathcal{H}_{\R})$ to be the collection of ordered pairs of vectors
$$\mathfrak{R}(\mathcal{H}_{\R}) := \left\{\begin{bmatrix} \vec{h}_r\\ \vec{h}_c\end{bmatrix}: \vec{h}_r, \vec{h}_c \in \mathcal{H}_{\R}\right\}.$$
Addition of vectors and multiplication of vectors  by {\em real} scalars  in $\mathfrak{R}(\mathcal{H}_{\R})$ are defined componentwise in the obvious way as 
$$\begin{bmatrix} \vec{h}_r\\ \vec{h}_c\end{bmatrix} + \begin{bmatrix} \vec{k}_r\\ \vec{k}_c\end{bmatrix} := \begin{bmatrix} \vec{h}_r + \vec{k}_r\\ \vec{h}_c + \vec{k}_c\end{bmatrix} \; \; \mbox{and} \; \; a \begin{bmatrix} \vec{h}_r\\ \vec{h}_c\end{bmatrix} := \begin{bmatrix} a\vec{h}_r\\ a \vec{h}_c\end{bmatrix}, \quad a \in \R.$$

One can make $\mathfrak{R}(\mathcal{H}_{\R})$  a {\em complex} vector space, denoted by $\mathfrak{C}(\mathcal{H}_{\R})$, by applying a well-known procedure for complexification and  {\em defining} multiplication by a complex scalar $s + i t$ to be 
\begin{equation}\label{cdef}
(s + i t) \begin{bmatrix} \vec{h}_r\\ \vec{h}_c\end{bmatrix} := \begin{bmatrix} s \vec{h}_r - t \vec{h}_c\\ s \vec{h}_c + t \vec{h}_r\end{bmatrix}.
\end{equation}
Informally, we think of multiplying out the expression $(s + i t) (\vec{h}_r + i \vec{h}_c)$ in the standard way to get
$(s \vec{h}_r - t \vec{h}_c) + i(s \vec{h}_c + t \vec{h}_r)$ and then equating the ``real'' component $s \vec{h}_r - t \vec{h}_c$ with the first entry of the vector and the ``imaginary'' component $s \vec{h}_c + t \vec{h}_r$ with the second entry.
One can check that all the necessary axioms  of a complex vector space hold for $\mathfrak{C}(\mathcal{H}_{\R})$.

It is important to remark here that the two vector spaces $\mathfrak{R}(\mathcal{H}_{\R})$ and $\mathfrak{C}(\mathcal{H}_{\R})$ consist of the same vectors. The difference is their vector space structure. The first allows only scalar multiplication by real numbers while the second allows scalar multiplication by complex numbers.

Now we make the complex vector space $\mathfrak{C}(\mathcal{H}_{\R})$ a complex {\em Hilbert} space by defining the inner product on  $\mathfrak{C}(\mathcal{H}_{\R})$ to be
$$\Big\langle \begin{bmatrix} \vec{h}_r\\ \vec{h}_c \end{bmatrix}, \begin{bmatrix} \vec{g}_r\\\vec{g}_c\end{bmatrix} \Big\rangle_{\mathfrak{C}(\mathcal{H}_{\R})} := \langle \vec{h}_r, \vec{g}_r\rangle  + \langle \vec{h}_c, \vec{g}_c\rangle + i \big(\langle \vec{h}_c, \vec{g}_r\rangle - \langle \vec{h}_r, \vec{g}_c\rangle\big).$$
Notice that
$$\Big\langle \begin{bmatrix}\vec{h}_r\\ \vec{0}\end{bmatrix}, \begin{bmatrix} \vec{g}_r\\ \vec{0}\end{bmatrix}\Big\rangle_{\mathfrak{C}(\mathcal{H}_{\R})} = \langle \vec{h}_r, \vec{g}_r\rangle$$
and so this inner product extends the real inner product on $\mathcal{H}_{\R}$ to the complex inner product space $\mathfrak{C}(\mathcal{H}_{\R})$. Also observe that
\begin{align*}
\Big\|\begin{bmatrix} \vec{h}_r\\   \vec{h}_c\end{bmatrix}\Big\|_{\mathfrak{C}(\mathcal{H}_{\R})}^2 & = \Big\langle \begin{bmatrix}\vec{h}_r\\  \vec{h}_c\end{bmatrix}, \begin{bmatrix} \vec{h}_r\\\vec{h}_c\end{bmatrix}\Big\rangle_{\mathfrak{C}(\mathcal{H}_{\R})}\\
& = \langle \vec{h}_r, \vec{h}_r\rangle + \langle \vec{h}_c, \vec{h}_c\rangle + i \big(\langle \vec{h}_c, \vec{h}_r\rangle - \langle \vec{h}_r, \vec{h}_c\rangle\big).
\end{align*}
As mentioned earlier, $\langle \vec{h}_c, \vec{h}_r\rangle$ and $ \langle \vec{h}_r, \vec{h}_c\rangle$ are both {\em real} (since $\vec{h}_r$ and $\vec{h}_c$ belong to $\mathcal{H}_{\R}$) and so  the imaginary term above drops   out (recall \eqref{reallll9} and \eqref{55ttreaaallll}). Thus,
$$\left\|\begin{bmatrix}\vec{h}_r\\   \vec{h}_c\end{bmatrix}\right\|_{\mathfrak{C}(\mathcal{H}_{\R})}^2 = \|\vec{h}_r\|^2 + \|\vec{h}_{c}\|^2.$$

\begin{Example}\label{diagonallslslslsls}
With Example \ref{exxarrrrrnnnn}, where $U$ is an $n \times n$ diagonal unitary matrix with distinct entries, we know that 
$$\mathfrak{C}(\C^n_{\R}) = \R^n \oplus \R^n  = \left\{\begin{bmatrix} \vec{a} \\ \vec{b} \end{bmatrix}: \vec{a}, \vec{b} \in \R^n\right\}$$
endowed with the complex linear structure
$$(s + i t) \begin{bmatrix} \vec{a}\\ \vec{b}\end{bmatrix} = \begin{bmatrix} s \vec{a} - t \vec{b}\\ s \vec{b} + t \vec{a}\end{bmatrix}.$$
Moreover, 
$$\left\| \begin{bmatrix} \vec{a}\\ \vec{b}\end{bmatrix}\right\|^{2}_{\mathfrak{C}(\C^{n}_{\R})} = \|\vec{a}\|_{\R^n}^{2} + \|\vec{b}\|_{\R^n}^{2} = \|\vec{a} + i \vec{b} \|^{2}_{\C^n}$$ (since $\vec{a}$ and  $\vec{b}$ belong to $\R^n$) and so $\mathfrak{C}(\C^{n}_{\R})$ is isometrically isomorphic to $\C^n$.  More about this below.
\end{Example}

\begin{Example}\label{bilallateralll}
From Example \ref{bsdlfjnbvdccvbVSDf}, where $(U f)(\xi) = \xi f(\xi)$ is the bilateral shift on $L^2(m)$,  we have that
$$\mathfrak{C}(L^2(m)_{\R}) =  L^{2}_{\R}(m) \oplus L^{2}_{\R}(m) =  \left\{\begin{bmatrix} f\\g\end{bmatrix}: f, g \in L^{2}_{\R}(m)\right\}$$
with the complex linear structure 
$$(s + i t) \begin{bmatrix} f\\ g\end{bmatrix} = \begin{bmatrix} s f - t g\\ s g + t f\end{bmatrix}.$$
Since 
$$\left\| \begin{bmatrix} f\\ g\end{bmatrix}\right\|^{2}_{\mathfrak{C}(L^2(m)_{\R}) } = \|f\|_{L^{2}(m)}^{2} + \|g\|_{L^{2}(m)}^{2} = \|f + i g\|^{2}_{L^2(m)}$$ ($f$ and $g$ are real-valued), we see that $\mathfrak{C}(L^2(m)_{\R})$ is isometrically isomorphic to $L^2(m)$. Again, more about this below.
\end{Example}

\begin{Example}\label{FTexamplesaepaeate}
Suppose $U$ is the Fourier transform on $L^2(\R)$. As with the previous example, along with Example \ref{FT666554}, we see in this case that
$$\frak{C}(L^2(\R)_{\R}) = L^2_{\R}(\R) \oplus L^{2}_{\R}(\R)  =  \left\{\begin{bmatrix} f\\g\end{bmatrix}: f, g \in L^{2}_{\R}(\R)\right\}$$
and $\mathfrak{C}(L^2(\R)_{\R})$ is isometrically isomorphic to $L^2(\R)$. 
\end{Example}

\begin{Example}
Returning to our discussion in Example \ref{werfg6yhbbBNJKJHGJKIUJH}, suppose that 
$$U = \operatorname{diag}(\xi_1, \xi_2, \xi_3, \ldots)$$ is a diagonal unitary operator on $\ell^2$ with distinct $\xi_j$. Choosing an $\vec{x} \in \ell^2$ with {\em real} entries and such that $\mathcal{H}(\vec{x}) = \ell^2$, we see, as with the two previous examples, that
$$\mathfrak{C}(\ell^2_{\R}) =  \left\{\begin{bmatrix} \vec{a}\\\vec{b}\end{bmatrix}: \vec{a}, \vec{b} \in \ell^2_{\R}\right\}$$ and 
$\mathfrak{C}(\ell^2_{\R})$ is isometrically isomorphic to $\ell^2$. 
\end{Example}

\begin{Example}\label{HT664466yY}
Suppose $U$ is the Hilbert transform on $L^2(\R)$. In this case, see Example \ref{HT00sdfspPPIPI}, $L^2(\R)_{\R}$ is the closed real linear span of the complex-valued functions $f_n, n \in \Z$, and
$$\mathfrak{C}(L^2(\R)_{\R}) =  \left\{\begin{bmatrix} f\\g\end{bmatrix}: f, g \in L^{2}(\R)_{\R}\right\}.$$
\end{Example}

Returning to the general situation, we need to relate our original complex Hilbert space $\mathcal{H}$ with our new complex Hilbert space $\mathfrak{C}(\mathcal{H}_{\R})$. We have informally observed this in a few of the previous examples.  For the orthonormal basis $(\vec{u}_k)_{k \geq 1}$ for $\mathcal{H}$ from the previous section (with each $\vec{u}_{\ell} \in \overline{\operatorname{span}_{\R}} \{E(\Omega) \vec{x}_k: \Omega \in \mathcal{B}\}$ for some $k$), define
\begin{equation}\label{XXXXex}
X: \mathcal{H} \to \mathfrak{C}(\mathcal{H}_{\R}), \quad X\Big(\sum_{\ell \geq 1} (\alpha_{\ell} + i \beta_{\ell}) \vec{u}_{\ell}\Big) := \begin{bmatrix} \sum_{\ell \geq 1} \alpha_{\ell} \vec{u}_{\ell}\\ \sum_{\ell \geq 1} \beta_{\ell} \vec{u}_{\ell}\end{bmatrix}, \quad \alpha_{\ell}, \beta_{\ell} \in \R.
\end{equation}
The above map $X$ is invertible since if
$$\vec{h}_{r} := \sum_{\ell \geq 1} \alpha_{\ell} \vec{u}_{\ell} \; \; \mbox{and} \; \; \vec{h}_c := \sum_{\ell \geq 1} \beta_{\ell} \vec{u}_{\ell},$$ where $\alpha_{\ell}$ and $\beta_{\ell}$ are real, then
$\vec{h}_r + i \vec{h}_c$ is a generic element of $\mathcal{H}$ and
$$X^{-1}\begin{bmatrix} \vec{h}_r\\ \vec{h}_c\end{bmatrix} = \vec{h}_{r} + i \vec{h}_c.$$
We now want to show that $X$ is an isometric isomorphism between the {\em complex} Hilbert spaces $\mathcal{H}$ and $\mathfrak{C}(\mathcal{H}_{\R})$.

Clearly the map $X$ is {\em real} linear in that $X(a \vec{h} + \vec{k}) = a X \vec{h} + X \vec{k}$ when $a \in \R$ and $\vec{h}, \vec{k} \in \mathcal{H}$. To show {\em complex} linearity, observe that if $\vec{h} = \vec{h}_{r} + i \vec{h}_c$, then
\begin{align*}
X ((s + i t) \vec{h}) & = X \Big((s + i t) \sum_{\ell \geq 1} (\alpha_{\ell} + i \beta_{\ell}) \vec{u}_{\ell}\Big)\\
& = X \Big(\sum_{\ell \geq 1}  ((s \alpha_{\ell} - t \beta_{\ell}) +  i (s \beta_{\ell} + t \alpha_{\ell}))\vec{u}_{\ell}\Big)\\
& = \begin{bmatrix} \sum_{\ell \geq 1} (s \alpha_{\ell} - t \beta_{\ell}) \vec{u}_{\ell}\\\sum_{\ell \geq 1} (s \beta_{\ell} + t \alpha_{\ell}) \vec{u}_{\ell}\end{bmatrix}\\
& = \begin{bmatrix}s \sum_{\ell \geq 1} \alpha_{\ell} \vec{u}_{\ell} - t \sum_{\ell \geq 1} \beta_{\ell} \vec{u}_{\ell}\\  s \sum_{\ell \geq 1} \beta_{\ell} \vec{u}_{\ell} + t \sum_{\ell \geq 1} \alpha_{\ell} \vec{u}_{\ell}\end{bmatrix}\\
& = (s + i t) \begin{bmatrix} \sum_{\ell \geq 1} \alpha_{\ell} \vec{u}_{\ell}\\ \sum_{\ell \geq 1} \beta_{\ell} \vec{u}_{\ell}\end{bmatrix}\\
& = (s + i t) \begin{bmatrix}\vec{h}_{r}\\ \vec{h}_{c}\end{bmatrix}\\
& = (s + i t) X( \vec{h}).
\end{align*}

Next we want to show that $X$ preserves the Hilbert space structure of $\mathcal{H}$ and $\mathfrak{C}(\mathcal{H}_{\R})$. Indeed, if
$$\vec{h} = \vec{h}_{r} + i  \vec{h}_{c} \; \; \mbox{and} \; \;  \vec{g} = \vec{g}_{r} + i \vec{g}_{c},$$ then
\begin{align*}
\langle X(\vec{h}), X(\vec{g})\rangle_{\mathfrak{C}(\mathcal{H}_{\R})} & = \Big\langle  \begin{bmatrix}\vec{h}_{r}\\ \vec{h}_c\end{bmatrix}, \begin{bmatrix}\vec{g}_{r}\\\vec{g}_{c}\end{bmatrix}\Big\rangle_{\mathfrak{C}(\mathcal{H}_{\R})}\\
& = \langle \vec{h}_{r}, \vec{g}_{r}\rangle + \langle \vec{h}_{c}, \vec{g}_{c}\rangle + i (\langle \vec{h}_{c}, \vec{g}_{r}\rangle - \langle \vec{h}_{r}, \vec{g}_{c}\rangle)\\
& = \langle \vec{h}_{r}, \vec{g}_{r}\rangle + \langle \vec{h}_{c}, \vec{g}_{c}\rangle + \langle i \vec{h}_c, \vec{g}_{r}\rangle + \langle \vec{h}_{r}, i \vec{g}_{c}\rangle\\
& = \langle \vec{h}_{r} + i \vec{h}_{c}, \vec{g}_{r} + i \vec{g}_{c}\rangle\\
& = \langle \vec{h}, \vec{g}\rangle.
\end{align*}
In summary, $X$ is an isometric isomorphism between the complex Hilbert spaces $\mathcal{H}$ and $\mathfrak{C}(\mathcal{H}_{\R})$.

\chapter{A Model for the Orbit}

Continuing our discussion from the previous chapter, we now represent a unitary operator as a matrix of real linear operators on the real Hilbert spaces constructed in the previous chapter. We first need to define what we mean by the term {\em real linear} as well as the associated adjoint of a real linear operator. This discussion will yield a model for a description of the conjugate orbit of a given unitary operator. 

\section{Types of linearity}

Parts of the discussion below can be found in \cite{MR2749452}. For a complex Hilbert space $\mathcal{H}$, 
a mapping $A: \mathcal{H} \to \mathcal{H}$ is {\em $\C$-linear} (or {\em complex linear}) if
$$A (\vec{x} + \vec{y}) = A \vec{x} + A \vec{y} \; \mbox{and} \; A(\lambda \vec{x}) = \lambda A \vec{x} \;  \; \mbox{for all $\vec{x}, \vec{y} \in \mathcal{H}$ and $\lambda \in \C$};$$
{\em $\R$-linear} (or {\em real linear}) if 
$$A (\vec{x} + \vec{y}) = A \vec{x} + A \vec{y} \; \mbox{and} \; A(\lambda \vec{x}) = \lambda A \vec{x}  \;  \; \mbox{for all $\vec{x}, \vec{y} \in \mathcal{H}$ and $\lambda \in \R$};$$
and {\em antilinear} if 
$$A (\vec{x} + \vec{y}) = A \vec{x} + A \vec{y} \; \mbox{and} \; A(\lambda \vec{x}) = \bar \lambda A \vec{x}  \;  \; \mbox{for all $\vec{x}, \vec{y} \in \mathcal{H}$ and $\lambda \in \C$}.$$
When a complex linear transformation $A$ is bounded, its (complex linear) adjoint is the familiar complex linear transformation $A^{*}$ on $\mathcal{H}$ satisfying $\langle A \vec{x}, \vec{y}\rangle = \langle \vec{x}, A^{*} \vec{y}\rangle$ for all $\vec{x}, \vec{y} \in \mathcal{H}$. The existence of $A^{*}$ comes from the Riesz representation theorem for Hilbert spaces.

For a bounded {\em antilinear} operator $A$ on $\mathcal{H}$, its adjoint $A^{\star}$ is the antilinear operator on $\mathcal{H}$ that satisfies 
\begin{equation}\label{oooOOkkkKKKK}
\langle A \vec{x}, \vec{y}\rangle = \overline{\langle \vec{x}, A^{\star} \vec{y}\rangle} \; \mbox{for all $\vec{x}, \vec{y} \in \mathcal{H}$.}
\end{equation} 
In the above, take special note of the $\star$ versus the $\ast$ superscript on $A$. The existence of $A^{\star}$ also comes from the Riesz representation theorem for Hilbert spaces.

 Notice that every real linear operator $A$ on $\mathcal{H}$ can be written as 
\begin{equation}\label{7796316}
A = \tfrac{1}{2} (A - i A i I) + \tfrac{1}{2} (A + i A iI) =: A_{c} + A_{a}
\end{equation}
 and one can check that the first summand $A_{c}$ is complex linear while the second $A_{a}$ is antilinear. For a bounded real linear operator $A$ on $\mathcal{H}$, its {\em real linear adjoint}
$A^{\dagger}$ is defined to be 
\begin{equation}\label{realadjoint}
A^{\dagger} := A_{c}^{*} + A_{a}^{\star}.
\end{equation}
 Note that the real adjoint $A^{\dagger}$ of $A$ satisfies $\Re \langle A \vec{x}, \vec{y}\rangle = \Re \langle \vec{x}, A^{\dagger} \vec{y}\rangle$ for all $\vec{x}, \vec{y} \in \mathcal{H}$. As this will be used later, we record the following remark.
 
 \begin{Remark}\label{reasaalalalalll}
 The above defines a real linear adjoint for a real linear operator on a {\em complex} Hilbert space. 
 Suppose that  $\mathcal{H}$ is a {\em real} Hilbert space with $\langle \vec{x}, \vec{y}\rangle \in \R$ for all $\vec{x}, \vec{y} \in \mathcal{H}$ (for example, the space $\mathcal{H}_{\R}$ from \eqref{HrrRR}). Then, via the Riesz representation theorem, there is a bounded real linear operator $A^{\dagger}$ on $\mathcal{H}$ which satisfies 
 $$\langle A \vec{x}, \vec{y}\rangle =  \langle \vec{x}, A^{\dagger} \vec{y}\rangle \; \mbox{for all $\vec{x}, \vec{y} \in \mathcal{H}$}.$$ 
 \end{Remark}

\section{A representation of a unitary operator}

For our given unitary operator $U$ on $\mathcal{H}$ and our isometric isomorphism $X: \mathcal{H} \to \mathfrak{C}(\mathcal{H}_{\R})$ from \eqref{XXXXex}, we need to compute $X U X^{-1}$ as a complex linear operator on $\mathfrak{C}(\mathcal{H}_{\R})$. Let
 $Q_\R$ denote the mapping from $\mathcal{H}$ onto  $\h_\R$ defined by
 \begin{equation}\label{QqqrrrRR}
Q_\R (\vec{h}_r + i \vec{h}_c) : =\vec{h}_r
\end{equation} and note that if $\vec{h} = \vec{h}_r + i \vec{h}_c$, then
\begin{equation}\label{QqqqQ}
\vec{h}=Q_\R \vec{h}-iQ_\R(i \vec{h}).
\end{equation}
It is important to observe that $Q_{\R}$ is real linear {\em but not necessarily complex linear}.

Now define the mappinngs $U_r$ and $U_c$ on the real Hilbert space $\mathcal{H}_{\R}$ by 
\begin{equation}\label{QQQQRRRRrrR}
U_r,U_c\colon \h_\R\to\h_\R, \quad  U_r \vec{h}_r := Q_\R U \vec{h}_r, \quad   U_c \vec{h}_r :=-Q_\R( i U (\vec{h}_r))
\end{equation}
 and note that $U_r$ and $U_c$ are {\em real linear} operators.
From \eqref{QqqqQ} we see that
\begin{align*}
XUX^{-1}\begin{bmatrix}\vec{h}_r\\\vec{h}_c\end{bmatrix} &=XU(\vec{h}_r+i \vec{h}_c)\\
&=X(U \vec{h}_r+iU \vec{h}_c)\\
&=X\left(Q_\R(U \vec{h}_r+U(i \vec{h}_c))-iQ_\R(i(U \vec{h}_r+iU( \vec{h}_c))\right)\\
&=X((Q_\R U\vec{h}_r+Q_\R U(i \vec{h}_c))+i(-Q_\R U(i \vec{h}_r)+Q_\R  U \vec{h}_c))\\
&=
\begin{bmatrix}\phm Q_\R U \vec{h}_r+Q_\R U(i \vec{h}_c)\\ -Q_\R U(i \vec{h}_r)+Q_\R U \vec{h}_c\end{bmatrix}\\
& = \begin{bmatrix}U_r \vec{h}_r - U_{c} \vec{h}_c\\U_{c} \vec{h}_r + U_{r} \vec{h}_{c}\end{bmatrix}\\
& =
\begin{bmatrix}
U_r & - U_c\\
U_c & \phm U_r
\end{bmatrix}
\begin{bmatrix}
\vec{h}_r\\
\vec{h}_c
\end{bmatrix}.
\end{align*}

The discussion above shows that the map $\widehat{U}$ on $\mathfrak{C}(\mathcal{H}_{\R})$ defined by 
$$\widehat{U} \begin{bmatrix}
\vec{h}_r\\
\vec{h}_c
\end{bmatrix} := \begin{bmatrix}
U_r & - U_c\\
U_c & \phm U_r
\end{bmatrix}
\begin{bmatrix}
\vec{h}_r\\
\vec{h}_c
\end{bmatrix} =  \begin{bmatrix}U_r \vec{h}_r - U_{c} \vec{h}_c\\ U_{c} \vec{h}_r + U_{r} \vec{h}_{c}\end{bmatrix}$$
is  $\C$-linear, due to the fact that $X$, $U$, and $X^{-1}$ are $\C$-linear maps. In addition, since $X$ is an isometric isomorphism, we conclude the following. 

\begin{Theorem}\label{sfddsfsdf}
The unitary operator $U$ on $\mathcal{H}$ is unitarily equivalent to the operator
$$ \widehat{U} := \begin{bmatrix}
U_r & - U_c\\
U_c & \phm U_r
\end{bmatrix},$$
where $U_r$ and $U_c$ are given by \eqref{QQQQRRRRrrR},
on $\mathfrak{C}(\mathcal{H}_{\R})$.
\end{Theorem}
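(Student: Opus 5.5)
The plan is to show that the isometric isomorphism $X\colon \mathcal{H}\to\mathfrak{C}(\mathcal{H}_{\R})$ from \eqref{XXXXex} intertwines $U$ with $\widehat{U}$, that is, $X U X^{-1} = \widehat{U}$. Since $X$ is a complex linear, surjective isometry (established at the end of the previous chapter), this gives the unitary equivalence. We do not need to verify separately that $\widehat{U}$ is unitary: it is conjugate to the unitary $U$ by a unitary.

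First we check that $Q_{\R}$ from \eqref{QqqrrrRR} is well defined and real linear. Every $\vec{h}\in\mathcal{H}$ has a unique decomposition $\vec{h}=\vec{h}_r+i\vec{h}_c$ with $\vec{h}_r,\vec{h}_c\in\mathcal{H}_{\R}$, namely via real and imaginary parts of the coefficients in the basis $(\vec{u}_\ell)$. Then \eqref{QqqqQ} follows because $Q_{\R}(i\vec{h}) = Q_{\R}(-\vec{h}_c + i\vec{h}_r) = -\vec{h}_c$. Consequently $U_r$ and $U_c$ in \eqref{QQQQRRRRrrR} are real linear maps of $\mathcal{H}_{\R}$ into itself. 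They are bounded because $\|Q_{\R}\vec{h}\|\leq\|\vec{h}\|$, which follows from $\|\vec{h}\|^2=\|\vec{h}_r\|^2+\|\vec{h}_c\|^2$, and this in turn uses that inner products in $\mathcal{H}_{\R}$ are real, as in \eqref{reallll9}.

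The main step is the computation of $XUX^{-1}\big[\begin{smallmatrix}\vec{h}_r\\ \vec{h}_c\end{smallmatrix}\big] = X(U\vec{h}_r + iU\vec{h}_c)$. Apply \eqref{QqqqQ} to $\vec{k}=U\vec{h}_r+iU\vec{h}_c$. The real part is $Q_{\R}U\vec{h}_r+Q_{\R}(iU\vec{h}_c) = U_r\vec{h}_r - U_c\vec{h}_c$. For the imaginary part, $-Q_{\R}(i\vec{k}) = -Q_{\R}(iU\vec{h}_r) + Q_{\R}(U\vec{h}_c) = U_c\vec{h}_r+U_r\vec{h}_c$. Here complex linearity of $U$ is used to write $U(i\vec{h}_c)=iU\vec{h}_c$, and real linearity of $Q_{\R}$ is used to split sums.

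The only point requiring care, and the one I expect to be the main obstacle, is bookkeeping. Since $Q_{\R}$ is not complex linear, factors of $i$ cannot be moved through it; they must be absorbed into the definition of $U_c$ with the correct sign. Once the two components are identified, $X$ sends $\vec{a}+i\vec{b}$ with $\vec{a},\vec{b}\in\mathcal{H}_{\R}$ to $\big[\begin{smallmatrix}\vec{a}\\ \vec{b}\end{smallmatrix}\big]$, so the result is exactly $\widehat{U}\big[\begin{smallmatrix}\vec{h}_r\\ \vec{h}_c\end{smallmatrix}\big]$.

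Finally, $\widehat{U}=XUX^{-1}$ is complex linear, being a composition of complex linear maps. Because $X$ is an isometric isomorphism, $\widehat{U}$ is unitary on $\mathfrak{C}(\mathcal{H}_{\R})$ and unitarily equivalent to $U$.
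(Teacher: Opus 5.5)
Your proposal is correct and follows the paper's proof essentially verbatim: the paper also applies \eqref{QqqqQ} to $U\vec{h}_r + iU\vec{h}_c$ to compute $XUX^{-1}$, reads off the components $U_r\vec{h}_r - U_c\vec{h}_c$ and $U_c\vec{h}_r + U_r\vec{h}_c$, and concludes because $X$ is a complex linear isometric isomorphism. Your extra checks that $Q_{\R}$ is well defined and contractive, using the realness of inner products on $\mathcal{H}_{\R}$, are details the paper leaves implicit.
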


Let us make a few observations about the real linear operators $U_r$ and $U_c$. We begin with a lemma from \cite[Lem.~4, Lem.~5]{MR0190750}. Since this reference is not readily available in English, we include its short proof.

\begin{Lemma}\label{GLuke}
For a unitary operator $U$ on $\mathcal{H}$ with
$$\mathcal{H} = \bigoplus_{k \geq 1} \mathcal{H}(\vec{x}_k) \; \mbox{and} \; 
\mathcal{H}_{\R} = \bigoplus_{k \geq 1} \mathcal{H}(\vec{x}_k)_{\R}$$
as in \eqref{Hhhkkk} and \eqref{HrrRR}, there is a conjugation $J$ on $\mathcal{H}$ such that
\begin{equation}\label{112Sdfsdf}
J\vec{x} = \vec{x}, \quad \vec{x} \in \mathcal{H}_{\R},
\end{equation} and  $J U J =U^{*}$.
\end{Lemma}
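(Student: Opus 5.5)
We define $J$ as the ``complex conjugation'' relative to the real structure $\mathcal{H}_{\R}$. Every $\vec{h}\in\mathcal{H}$ can be written uniquely as $\vec{h}=\vec{h}_r+i\vec{h}_c$ with $\vec{h}_r,\vec{h}_c\in\mathcal{H}_{\R}$, using the orthonormal basis $(\vec{u}_\ell)_{\ell\geq 1}$ from \eqref{dfgjdfg88810ooO}. We set
$$J(\vec{h}_r+i\vec{h}_c):=\vec{h}_r-i\vec{h}_c,$$
that is, $J$ is the map \eqref{Creeeeeeel} built from the basis $(\vec{u}_\ell)$. Antilinearity and $J^2=I$ are immediate from this formula. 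Isometry follows from the computation in the previous chapter, $\|\vec{h}_r+i\vec{h}_c\|^2=\|\vec{h}_r\|^2+\|\vec{h}_c\|^2$, which rests on the inner products in $\mathcal{H}_{\R}$ being real by \eqref{reallll9}. Hence $J$ is a conjugation, and \eqref{112Sdfsdf} holds by construction.

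To prove $JUJ=U^{*}$, it suffices to check that $JUJ\vec{y}=U^{*}\vec{y}$ on a set whose complex span is dense. Both sides are complex linear and bounded, and the vectors $E(\Omega)\vec{x}_k$, with $\Omega\in\mathcal{B}$ and $k\geq 1$, span $\mathcal{H}$ by \eqref{Hhhkkk} and \eqref{88hhHCCNNccc12}. Since $E(\Omega)\vec{x}_k\in\mathcal{H}_{\R}$, we have $J E(\Omega)\vec{x}_k=E(\Omega)\vec{x}_k$, so the target identity reduces to
$$JU\vec{y}=U^{*}\vec{y},\qquad \vec{y}=E(\Omega)\vec{x}_k,$$
which is equivalent to $J U\vec{y} = U^{*}\vec{y}$ with $U\vec{y}$ and $U^{*}\vec{y}$ complex conjugate relative to $\mathcal{H}_{\R}$.

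For this step, we would first show that $JE(\Omega)J=E(\Omega)$ for every Borel set $\Omega$. The projection $E(\Omega)$ maps each $E(\Omega')\vec{x}_k$ to $E(\Omega\cap\Omega')\vec{x}_k$, so it maps the real span into the real span, and by continuity it maps $\mathcal{H}(\vec{x}_k)_{\R}$ into itself. Because $E(\Omega)$ is complex linear, it commutes with $J$. Next, write $U=\int_{\T}\xi\,dE(\xi)$ and approximate it in norm by simple sums $\sum_j\lambda_jE(\Omega_j)$. Conjugating such a sum by $J$ gives $\sum_j\bar\lambda_jE(\Omega_j)$, because $J$ is antilinear and commutes with each $E(\Omega_j)$. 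These conjugated sums approximate $\int_{\T}\bar\xi\,dE(\xi)=U^{*}$, and since $J$ is isometric the norm limit passes through, giving $JUJ=U^{*}$.

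The main obstacle is this last step: justifying that $J$ commutes with every spectral projection and that the norm limit of the simple-function approximations carries through the antilinear conjugation. We would handle it by bounding $\|J(A-B)J\|=\|A-B\|$ for complex linear operators $A$ and $B$, which makes the passage to the limit routine. All other steps are direct verifications.
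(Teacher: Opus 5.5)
Your proposal is correct and follows the paper's proof: the same $J$ built from the basis $(\vec{u}_\ell)\subset\mathcal{H}_{\R}$, then the identity $JE(\Omega)J=E(\Omega)$, then conjugating the spectral integral to obtain $\int_{\T}\bar\xi\,dE(\xi)=U^{*}$. Your version is somewhat more careful than the paper's, since you show that $E(\Omega)$ maps $\mathcal{H}_{\R}$ into itself (the paper only checks the identity at the vectors $\vec{x}_k$) and you justify the norm-limit passage through simple-function approximations; the reduction in your second paragraph is unnecessary once the third paragraph is in place.
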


\begin{proof}
Notice that $\mathcal{H}_{\R}$ is a real Hilbert space embedded in the complex Hilbert space $\mathcal{H}$. Moreover, by \eqref{dfgjdfg88810ooO}, $(\vec{u}_{\ell})_{\ell \geq 1}$ is an orthonormal basis for $\mathcal{H}$ with $(\vec{u}_{\ell})_{\ell \geq 1} \subset  \mathcal{H}_{\R}$. Define a conjugation $J$ on $\mathcal{H}$ by $J \vec{u}_{\ell} = \vec{u}_{\ell}$ and extend antilinearly  to $\mathcal{H}$ by 
$$J \Big(\sum_{\ell \geq 1} a_{\ell} \vec{u}_{\ell} \Big) := \sum_{\ell \geq 1} \bar a_{\ell} \vec{u}_{\ell}$$ and note that $J \vec{x} = \vec{x}$ for all $\vec{x} \in \mathcal{H}_{\R}$. 

Recall the spectral representation of $U$ from \eqref{spectralrep}, i.e., 
\begin{equation}\label{UdeeeEE}
U = \int_{\T} \xi d E(\xi).
\end{equation}
For any Borel set $\Omega \subset \T$, we have 
$$J E(\Omega) J \vec{x}_{k} = J E(\Omega) \vec{x}_{k} = E(\Omega) \vec{x}_k$$
for all $k \geq 1$ (recall that $E(\Omega) \vec{x}_k \in \mathcal{H}_{\R}$ from \eqref{ahhhhhkkkkxxxrrr}) and
thus $J E(\Omega) J = E(\Omega)$ on $\mathcal{H}_{\R}$ and thus on $\mathcal{H}$. Finally, from \eqref{UdeeeEE}, we have 
\begin{align*}
J U J & = J \Big(\int_{\T} \xi d E(\xi) \Big)J\\
& = \int_{\T} \overline{\xi} (J dE(\xi) J)\\
& = \int_{\T} \overline{\xi} d E(\xi)\\
& = U^{*},
\end{align*}
which proves the result. 
\end{proof}

\begin{Proposition}\label{Jayyyyhat}
If $\widehat{J} := X J X^{-1}$, then $\widehat{J}$ is a conjugation on $\mathfrak{C}(\mathcal{H}_{\R})$ and
$$\widehat{J}\begin{bmatrix}\vec{h}_{r}\\ \vec{h}_{c}\end{bmatrix} = \begin{bmatrix}\phantom{-}\vec{h}_{r}\\ -\vec{h}_{c}\end{bmatrix} \; \; \mbox{for all} \; \;  \begin{bmatrix}\vec{h}_{r}\\ \vec{h}_{c}\end{bmatrix} \in \mathfrak{C}(\mathcal{H}_{\R}).$$ Moreover,
$$\widehat{J} \widehat{U} \widehat{J} = \widehat{U^{*}} =  \begin{bmatrix}\phm U_{r} & U_{c}\\ -U_{c} & U_{r}\end{bmatrix}.$$
\end{Proposition}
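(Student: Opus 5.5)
The plan has three parts: transport the conjugation property of $J$ through $X$, compute $\widehat{J}$ using \eqref{XXXXex}, and then read off $\widehat{J}\widehat{U}\widehat{J}$ from Lemma \ref{GLuke} and Theorem \ref{sfddsfsdf}.

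First, $\widehat{J} = XJX^{-1}$ is a conjugation. Since $X$ is a $\C$-linear isometric isomorphism, $\widehat{J}$ is antilinear and isometric, and $\widehat{J}^2 = XJ^2X^{-1} = I$.

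Second, for the formula, I take $\vec{h}_r, \vec{h}_c \in \mathcal{H}_{\R}$. Then $X^{-1}[\vec{h}_r; \vec{h}_c] = \vec{h}_r + i\vec{h}_c$. By antilinearity and \eqref{112Sdfsdf},
$$J(\vec{h}_r + i\vec{h}_c) = J\vec{h}_r - iJ\vec{h}_c = \vec{h}_r - i\vec{h}_c.$$
The coefficients of $-\vec{h}_c$ in the basis $(\vec{u}_\ell)_{\ell \geq 1}$ are real, so applying $X$ gives $[\vec{h}_r; -\vec{h}_c]$.

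Third, for the conjugated operator, Lemma \ref{GLuke} gives $JUJ = U^{*}$. Hence
$$\widehat{J}\widehat{U}\widehat{J} = XJX^{-1}\,XUX^{-1}\,XJX^{-1} = XJUJX^{-1} = XU^{*}X^{-1} = \widehat{U^{*}}.$$
Applying Theorem \ref{sfddsfsdf} to the unitary $U^{*}$ (with the same $X$, which does not depend on the operator), $\widehat{U^{*}}$ has blocks $(U^{*})_r$ and $(U^{*})_c$.

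The one step needing care is to identify these blocks as $(U^{*})_r = U_r$ and $(U^{*})_c = -U_c$. The cleanest route is direct block multiplication. Write $\widehat{J} = \operatorname{diag}(I, -I)$ as a real-linear block matrix; this is legitimate because the block formula for $\widehat{U}$ is real linear in each entry. Then
$$\operatorname{diag}(I,-I)\begin{bmatrix} U_r & -U_c \\ U_c & U_r \end{bmatrix}\operatorname{diag}(I,-I) = \begin{bmatrix} U_r & U_c \\ -U_c & U_r \end{bmatrix}.$$
This gives the displayed matrix directly, and equating it with $\widehat{U^{*}}$ yields the stated identity. As a consistency check, one can verify $(U^{*})_r = U_r$ and $(U^{*})_c = -U_c$ from \eqref{QQQQRRRRrrR}. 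For $\vec{h} \in \mathcal{H}_{\R}$, $Q_{\R}$ extracts the $J$-real part, so $Q_{\R}\vec{y} = \tfrac12(\vec{y} + J\vec{y})$. Using $JUJ = U^{*}$ and $J\vec{h} = \vec{h}$ gives $JU\vec{h} = U^{*}\vec{h}$, so
$$Q_{\R}U\vec{h} = \tfrac12(U + U^{*})\vec{h} = Q_{\R}U^{*}\vec{h},$$
and similarly for the imaginary parts with a sign flip.
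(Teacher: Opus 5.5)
Your proposal is correct and follows essentially the same route as the paper: transport the conjugation property through $X$, compute $\widehat{J}$ directly from \eqref{XXXXex}, evaluate $\widehat{J}\widehat{U}\widehat{J}$ explicitly, and identify it with $XU^{*}X^{-1} = \widehat{U^{*}}$ via Lemma \ref{GLuke}; your product with $\operatorname{diag}(I,-I)$ is exactly the paper's vector computation. Your additional check using $Q_{\R} = \tfrac12(I+J)$ is a good direct confirmation that $(U^{*})_r = U_r$ and $(U^{*})_c = -U_c$, which the paper only obtains implicitly.
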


\begin{proof}
The fact that $\widehat{J}$ is a conjugation follows from the fact that $J$ is a conjugation on $\mathcal{H}$ and $X$ is an isometric isomorphism from $\mathcal{H}$ onto $\mathfrak{C}(\mathcal{H}_{\R})$. Now observe that since $J \vec{h}_r = \vec{h}_r$ and $J \vec{h}_c = \vec{h}_c$ (see \eqref{112Sdfsdf}) we have
\begin{align*}
\widehat{J}\begin{bmatrix}\vec{h}_{r}\\ \vec{h}_{c}\end{bmatrix} & = X J X^{-1}\begin{bmatrix}\vec{h}_{r} \\\vec{h}_{c}\end{bmatrix} \\
& = X J (\vec{h}_{r} + i \vec{h}_{c})\\
& = X (\vec{h}_{r} - i \vec{h}_{c})\\
& =  \begin{bmatrix}\phantom{-}\vec{h}_{r} \\-\vec{h}_{c}\end{bmatrix}.
\end{align*}

Finally, by the calculation before Theorem \ref{sfddsfsdf}, we see that
\begin{align*}
\widehat{J} \widehat{U} \widehat{J} \begin{bmatrix}\vec{h}_{r}\\ \vec{h}_{c}\end{bmatrix} & = \widehat{J} X U X^{-1} \begin{bmatrix}\phantom{-}\vec{h}_{r} \\ - \vec{h}_{c}\end{bmatrix} \\
& =  \widehat{J} X U (\vec{h}_{r} - i \vec{h}_{c})\\
& =  \widehat{J} X (U \vec{h}_{r} - i U \vec{h}_{c})\\
& = \widehat{J}\begin{bmatrix}U_{r} \vec{h}_{r} + U_{c} \vec{h}_{c}\\ U_{c} \vec{h}_{r} - U_{r} \vec{h}_{c}\end{bmatrix}\\
& = \begin{bmatrix} U_{r} \vec{h}_{r} + U_{c} \vec{h}_{c}\\ U_{r} \vec{h}_{c} - U_{c} \vec{h}_{r}\end{bmatrix}\\
& = \begin{bmatrix}\phm U_{r} & U_{c}\\ -U_{c} & U_{r}\end{bmatrix} \begin{bmatrix} \vec{h}_{r}\\ \vec{h}_{c}\end{bmatrix}.
\end{align*}
On the other hand, by Lemma \ref{GLuke}, 
\begin{align*}
\widehat{J} \widehat{U} \widehat{J}  & = (X J X^{-1})(X U X^{-1})(X J X^{-1})\\
& = X (J U J) X^{-1}\\
& = X U^{*} X^{-1}\\
& = \widehat{U^{*}},
\end{align*}
which verifies the desired adjoint formula.
\end{proof}

We now want to compute $\widehat{U}^{*}$, the adjoint of the matrix operator $\widehat{U}$. There is a more general result one can state here. Indeed, suppose
$$\widehat{W} := \begin{bmatrix} W_{rr} & W_{cr}\\ W_{rc} & W_{cc}\end{bmatrix},$$ where each of the entries $W_{rr}, W_{cr}, W_{rc}, W_{cc}$ are {\em real} linear operators on $\mathcal{H}_{\R}$. Notice that $\widehat{W}$ defines a real linear operator on $\mathfrak{R}(\mathcal{H}_{\R})$ by 
$$\widehat{W}  \begin{bmatrix}
\vec{h}_r\\
\vec{h}_c
\end{bmatrix} := \begin{bmatrix}
W_{rr} & W_{cr}\\
W_{rc} & Wcc
\end{bmatrix}
\begin{bmatrix}
\vec{h}_r\\
\vec{h}_c
\end{bmatrix} = \begin{bmatrix} W_{rr} \vec{h}_r + W_{cr} \vec{h}_c\\ W_{rc} \vec{h}_r + W_{cc} \vec{h}_c\end{bmatrix}.$$

\begin{Lemma}\label{C-linearW}
The real linear operator 
$$\widehat{W} = \begin{bmatrix} W_{rr} & W_{cr}\\ W_{rc} & W_{cc}\end{bmatrix}$$ on the real space $\mathfrak{R}(\mathcal{H}_{\R})$ is $\C$-linear on the complex space $\mathfrak{C}(\mathcal{H}_{\R})$ if and only if $W_{rr} = W_{cc}$ and $W_{cr} = - W_{rc}$.
\end{Lemma}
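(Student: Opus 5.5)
Since $\widehat{W}$ is already additive and real homogeneous on $\mathfrak{R}(\mathcal{H}_{\R})$, and every complex scalar is $s + it = s + t\,i$ with $s, t \in \R$, complex linearity of $\widehat{W}$ on $\mathfrak{C}(\mathcal{H}_{\R})$ is equivalent to the single identity
$$\widehat{W}\Big(i\begin{bmatrix}\vec{h}_r\\ \vec{h}_c\end{bmatrix}\Big) = i\,\widehat{W}\begin{bmatrix}\vec{h}_r\\ \vec{h}_c\end{bmatrix} \quad \mbox{for all } \vec{h}_r, \vec{h}_c \in \mathcal{H}_{\R}.$$
Indeed, if this holds then $\widehat{W}((s+it)\vec{v}) = s\widehat{W}\vec{v} + t\widehat{W}(i\vec{v}) = (s+it)\widehat{W}\vec{v}$, using that the complex structure \eqref{cdef} restricted to real scalars is the componentwise one. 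So the plan is to reduce to commuting with multiplication by $i$ and then compare entries.

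By \eqref{cdef} with $s = 0$, $t = 1$, multiplication by $i$ is the real linear map $\begin{bmatrix}\vec{h}_r\\ \vec{h}_c\end{bmatrix}\mapsto\begin{bmatrix}-\vec{h}_c\\ \vec{h}_r\end{bmatrix}$, i.e. the block matrix $\mathbf{i} := \begin{bmatrix} 0 & -I\\ I & 0\end{bmatrix}$. Then compute
$$\widehat{W}\mathbf{i} = \begin{bmatrix} W_{cr} & -W_{rr}\\ W_{cc} & -W_{rc}\end{bmatrix}, \qquad \mathbf{i}\widehat{W} = \begin{bmatrix} -W_{rc} & -W_{cc}\\ W_{rr} & W_{cr}\end{bmatrix}.$$

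If $W_{rr} = W_{cc}$ and $W_{cr} = -W_{rc}$, these two block matrices coincide, so $\widehat{W}$ commutes with $\mathbf{i}$ and is therefore $\C$-linear. Conversely, if $\widehat{W}$ is $\C$-linear, apply both sides to $\begin{bmatrix}\vec{h}\\ \vec{0}\end{bmatrix}$ and $\begin{bmatrix}\vec{0}\\ \vec{h}\end{bmatrix}$ for arbitrary $\vec{h}\in\mathcal{H}_{\R}$. Equating components gives $W_{cr}\vec{h} = -W_{rc}\vec{h}$ and $W_{cc}\vec{h} = W_{rr}\vec{h}$, with the remaining two entries giving the same relations. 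Hence $W_{rr} = W_{cc}$ and $W_{cr} = -W_{rc}$ as operators on $\mathcal{H}_{\R}$.

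There is no real obstacle here. The only point needing care is the first reduction, namely that real linearity together with commuting with $i$ gives full $\C$-linearity, and this is immediate from \eqref{cdef}. As a consistency check, $\widehat{U}$ from Theorem \ref{sfddsfsdf} has the form with $W_{rr} = W_{cc} = U_r$ and $W_{rc} = -W_{cr} = U_c$.
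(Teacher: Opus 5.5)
Your proof is correct and follows essentially the same route as the paper: both reduce $\C$-linearity to commuting with multiplication by $i$, which by \eqref{cdef} is the block matrix $\begin{bmatrix} 0 & -I\\ I & 0\end{bmatrix}$, and then compare block entries. You also make explicit two steps the paper leaves implicit: why commuting with $i$ suffices for full $\C$-linearity, and how the entry comparison follows by testing on $\begin{bmatrix}\vec{h}\\ \vec{0}\end{bmatrix}$ and $\begin{bmatrix}\vec{0}\\ \vec{h}\end{bmatrix}$.
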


\begin{proof}
For 
$$\begin{bmatrix} \vec{h}_r\\ \vec{h}_c\end{bmatrix}\in \mathfrak{C}(\mathcal{H}_{\R})$$ we use the definition of multiplication by a complex scalar in $\mathfrak{C}(\mathcal{H}_{\R})$ from \eqref{cdef} to get 
\begin{align*}
\widehat{W}\Big(i \begin{bmatrix} \vec{h}_r\\ \vec{h}_c\end{bmatrix}\Big) & = \begin{bmatrix} W_{rr} & W_{cr}\\ W_{rc} & W_{cc}\end{bmatrix} \begin{bmatrix} -\vec{h}_{c}\\ \phm\vec{h}_{r}\end{bmatrix}\\
& = \begin{bmatrix} - W_{rr} \vec{h}_c + W_{cr} \vec{h}_r\\ -W_{rc} \vec{h}_c + W_{cc} \vec{h}_{r}\end{bmatrix}\\
& = i  \begin{bmatrix}\phm W_{cc} \vec{h}_r - W_{rc} \vec{h}_c\\ - W_{cr} \vec{h}_r + W_{rr} \vec{h}_c\end{bmatrix}\\
& =i  \begin{bmatrix} \phm W_{cc} & - W_{rc}\\ -W_{cr} & \phantom{-}W_{rr}\end{bmatrix}\begin{bmatrix} \vec{h}_{r}\\ \vec{h}_{c}\end{bmatrix}.
\end{align*}
On the other hand,
$$i \widehat{W} \begin{bmatrix} \vec{h}_r\\ \vec{h}_c\end{bmatrix} =  i \begin{bmatrix} W_{rr} & W_{cr}\\ W_{rc} & W_{cc}\end{bmatrix} \begin{bmatrix} \vec{h}_r\\ \vec{h}_c\end{bmatrix}.$$
Comparing the matrix entries yields the result.
\end{proof}

For the entries $W_{rr}, W_{cr}, W_{rc}, W_{cc}$, which define real linear operators on the real Hilbert space $\mathcal{H}_{\R}$, let  $W_{rr}^{\dagger}$, $W_{cr}^{\dagger}$, $W_{rc}^{\dagger}$, $W_{cc}^{\dagger}$ denote their respective {\em real adjoints} from  Remark \ref{reasaalalalalll}  in that $\langle W \vec{h}, \vec{k}\rangle = \langle \vec{h} ,W^{\dagger} \vec{k}\rangle$ for all $\vec{h}, \vec{k} \in \mathcal{H}_{\R}$. 

\begin{Lemma}
Suppose
$$\widehat{W} = \begin{bmatrix} W_{rr} & - W_{rc}\\ W_{rc} &\phm W_{rr}\end{bmatrix}$$ is $\C$-linear on a complex space $\mathfrak{C}(\mathcal{H}_{\R})$. Then the formula for its adjoint is
$$\widehat{W}^{*} = \begin{bmatrix}\phm W_{rr}^{\dagger} & W^{\dagger}_{rc}\\ -W^{\dagger}_{rc} & W_{rr}^{\dagger}\end{bmatrix}.$$
\end{Lemma}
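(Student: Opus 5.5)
The plan is to verify the adjoint identity directly from the definition of the complex inner product on $\mathfrak{C}(\mathcal{H}_{\R})$. I will show that the candidate operator
$$\widehat{V} := \begin{bmatrix}\phm W_{rr}^{\dagger} & W^{\dagger}_{rc}\\ -W^{\dagger}_{rc} & W_{rr}^{\dagger}\end{bmatrix}$$
satisfies $\langle \widehat{W}\vec{h}, \vec{g}\rangle_{\mathfrak{C}(\mathcal{H}_{\R})} = \langle \vec{h}, \widehat{V}\vec{g}\rangle_{\mathfrak{C}(\mathcal{H}_{\R})}$ for all $\vec{h} = [\vec{h}_r; \vec{h}_c]$ and $\vec{g} = [\vec{g}_r; \vec{g}_c]$. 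Uniqueness of the Hilbert space adjoint then gives $\widehat{W}^{*} = \widehat{V}$.

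Before that, I will note that $\widehat{V}$ is itself of the form required by Lemma \ref{C-linearW}. Its diagonal entries are both $W_{rr}^{\dagger}$, and its off-diagonal entries are $W_{rc}^{\dagger}$ and $-W_{rc}^{\dagger}$. It is therefore $\C$-linear, so it is a legitimate candidate for the adjoint. Boundedness is inherited from the boundedness of the real adjoints in Remark \ref{reasaalalalalll}.

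For the computation, I first write
$$\widehat{W}\vec{h} = \begin{bmatrix} W_{rr}\vec{h}_r - W_{rc}\vec{h}_c\\ W_{rc}\vec{h}_r + W_{rr}\vec{h}_c\end{bmatrix}.$$
Then I expand $\langle \widehat{W}\vec{h},\vec{g}\rangle_{\mathfrak{C}(\mathcal{H}_{\R})}$ using the defining formula: the real part is the sum of the two componentwise real inner products, and the imaginary part is $i(\langle \cdot_c, \vec{g}_r\rangle - \langle \cdot_r, \vec{g}_c\rangle)$. This produces eight real inner products on $\mathcal{H}_{\R}$. Each one I move across using the real adjoint relation $\langle A\vec{x},\vec{y}\rangle = \langle \vec{x}, A^{\dagger}\vec{y}\rangle$ from Remark \ref{reasaalalalalll}, so that every term has the form $\langle \vec{h}_r, \cdot\rangle$ or $\langle \vec{h}_c, \cdot\rangle$.

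Next I regroup the terms. I expect to obtain
$$\langle \vec{h}_r, W_{rr}^{\dagger}\vec{g}_r + W_{rc}^{\dagger}\vec{g}_c\rangle + \langle \vec{h}_c, -W_{rc}^{\dagger}\vec{g}_r + W_{rr}^{\dagger}\vec{g}_c\rangle$$
for the real part, and
$$i\big(\langle \vec{h}_c, W_{rr}^{\dagger}\vec{g}_r + W_{rc}^{\dagger}\vec{g}_c\rangle - \langle \vec{h}_r, -W_{rc}^{\dagger}\vec{g}_r + W_{rr}^{\dagger}\vec{g}_c\rangle\big)$$
for the imaginary part. These are exactly the real and imaginary parts of $\langle \vec{h}, \widehat{V}\vec{g}\rangle_{\mathfrak{C}(\mathcal{H}_{\R})}$, which finishes the argument.

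The only real obstacle is bookkeeping in this regrouping. The signs in the imaginary part are easy to get wrong, since the definition carries a minus sign in front of $\langle \vec{h}_r, \vec{g}_c\rangle$. I will check that all eight terms match, pairing them by which component of $\vec{h}$ and which component of $\vec{g}$ they involve.
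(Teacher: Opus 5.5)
Your proposal is correct and takes the same approach as the paper, which expands $\langle \widehat{W}\vec{h},\vec{g}\rangle_{\mathfrak{C}(\mathcal{H}_{\R})}$ using the defining inner product and moves each entry across with the real adjoint (the paper leaves this computation to the reader); your regrouped real and imaginary parts check out term by term. The preliminary $\C$-linearity check on the candidate is harmless but not needed, since the identity $\langle \widehat{W}\vec{h},\vec{g}\rangle = \langle \vec{h},\widehat{V}\vec{g}\rangle$ for all $\vec{h},\vec{g}$ already forces $\widehat{V} = \widehat{W}^{*}$.
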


\begin{proof}
Let
$$\begin{bmatrix}\vec{h}_r\\ \vec{h}_c\end{bmatrix},  \begin{bmatrix} \vec{g}_r\\\vec{g}_{c}\end{bmatrix} \in \mathfrak{C}(\mathcal{H}_{\R}).$$ Then
$$
 \Big\langle \widehat{W} \begin{bmatrix} \vec{h}_r \\ \vec{h}_{c}\end{bmatrix}, \begin{bmatrix} \vec{g}_r\\ \vec{g}_c\end{bmatrix}\Big\rangle  = \Big\langle \begin{bmatrix} W_{rr} & -W_{cc}\\ W_{rc} &\phm W_{rr}\end{bmatrix} \begin{bmatrix} \vec{h}_{r}\\ \vec{h}_{c}\end{bmatrix}, \begin{bmatrix} \vec{g}_{r}\\ \vec{g}_{c}\end{bmatrix}\Big\rangle.
$$
We leave it to the reader to perform the necessary matrix multiplication and apply the definition of the inner product in $\mathfrak{C}(\mathcal{H}_{\R})$, along with the definition of the real adjoint, to see this becomes
$$\Big\langle \begin{bmatrix} \vec{h}_r \\ \vec{h}_c \end{bmatrix},\left[\!{\begin{array}{cc}\phantom{-} W_{rr}^{\dagger}& W_{rc}^{\dagger} \\- W_{rc}^{\dagger} & W_{rr}^{\dagger}\end{array}}\!\right]\left[\!{\begin{array}{c} \vec{g}_r \\ \vec{g}_c \end{array}}\!\right]\Big\rangle,$$
which verifies the adjoint formula.
\end{proof}

For this next technical lemma, recall the conjugation $J$ on $\mathcal{H}$ from Lemma \ref{GLuke} and the associated conjugation $\widehat{J}$ on $\mathfrak{C}(\mathcal{H}_{\R})$  from Proposition \ref{Jayyyyhat}.

\begin{Lemma} Let
$$\widehat{W} = \begin{bmatrix} W_{rr} & - W_{rc}\\ W_{rc} &\phm W_{rr}\end{bmatrix}$$
be $\C$-linear as an operator in complex space $\frak{C}(\mathcal{H}_{\R})$. The  operator $\widehat{W}$
is $\widehat{J}$ symmetric, that is to say, $\widehat{J} \widehat{W} \widehat{J} = \widehat{W}^{*}$ if and only if $W_{rr}^{\dagger} = W_{rr}$ and $W_{rc}^{\dagger} = W_{rc}$.
\end{Lemma}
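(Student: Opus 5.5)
This is a direct matrix computation that combines Proposition \ref{Jayyyyhat} with the adjoint formula from the preceding lemma. First I compute $\widehat{J}\widehat{W}\widehat{J}$ on a vector $\bigl[\begin{smallmatrix}\vec{h}_r\\ \vec{h}_c\end{smallmatrix}\bigr]$. By Proposition \ref{Jayyyyhat}, $\widehat{J}$ changes the sign of the second component. Applying $\widehat{W}$ to $\bigl[\begin{smallmatrix}\vec{h}_r\\ -\vec{h}_c\end{smallmatrix}\bigr]$ gives
$\bigl[\begin{smallmatrix}W_{rr}\vec{h}_r + W_{rc}\vec{h}_c\\ W_{rc}\vec{h}_r - W_{rr}\vec{h}_c\end{smallmatrix}\bigr]$, using only the real linearity of the entries. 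Applying $\widehat{J}$ again flips the sign of the second entry. Hence
$$\widehat{J}\widehat{W}\widehat{J} = \begin{bmatrix} \phm W_{rr} & W_{rc}\\ -W_{rc} & W_{rr}\end{bmatrix}.$$
This is the same computation as in the proof of Proposition \ref{Jayyyyhat}, with $U_r, U_c$ replaced by $W_{rr}, W_{rc}$.

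Next, by the preceding lemma,
$$\widehat{W}^{*} = \begin{bmatrix}\phm W_{rr}^{\dagger} & W^{\dagger}_{rc}\\ -W^{\dagger}_{rc} & W_{rr}^{\dagger}\end{bmatrix}.$$
The sufficiency direction is now immediate: if $W_{rr}^\dagger = W_{rr}$ and $W_{rc}^\dagger = W_{rc}$, the two block matrices agree entrywise, so they are equal as operators.

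For necessity, I assume $\widehat{J}\widehat{W}\widehat{J} = \widehat{W}^*$ and recover the entries. Applying both sides to $\bigl[\begin{smallmatrix}\vec{h}\\ \vec{0}\end{smallmatrix}\bigr]$ with $\vec{h}\in\mathcal{H}_{\R}$ arbitrary and comparing first and second components gives $W_{rr}\vec{h} = W_{rr}^\dagger\vec{h}$ and $-W_{rc}\vec{h} = -W_{rc}^\dagger\vec{h}$. The only point needing care, and I expect it to be the sole (minor) obstacle, is that equality of two vectors in $\mathfrak{C}(\mathcal{H}_{\R})$ means componentwise equality in $\mathcal{H}_{\R}$. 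This holds by construction, since the two spaces consist of the same ordered pairs. It also matters that the block matrices act by the formal matrix rule regardless of the complex structure, which is exactly how $\widehat{W}$ was defined on $\mathfrak{R}(\mathcal{H}_{\R})$. With these observations the proof is complete.
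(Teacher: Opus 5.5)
Your proposal is correct and is the computation the paper has in mind. The paper's proof only says the argument is a computation like earlier ones and leaves it to the reader: compute $\widehat{J}\widehat{W}\widehat{J}=\left[\begin{smallmatrix} \phm W_{rr} & W_{rc}\\ -W_{rc} & W_{rr}\end{smallmatrix}\right]$ as in Proposition~\ref{Jayyyyhat}, then compare entrywise with the adjoint formula of the preceding lemma, and your test on vectors $\bigl[\begin{smallmatrix}\vec{h}\\ \vec{0}\end{smallmatrix}\bigr]$ carries out that comparison correctly.
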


\begin{proof}
This involves a similar computation as we have done before so we leave it to reader to work through the details. 
\end{proof}

We represented our general unitary $U$ on $\mathcal{H}$ by
$$\widehat{U} =  \begin{bmatrix}
U_r & - U_c\\
U_c &\phm U_r
\end{bmatrix}$$
on $\mathfrak{C}(\mathcal{H}_{\R})$. From our discussion above we know that
$$\widehat{U^{*}} = \widehat{U}^{*} = \begin{bmatrix}\phm U_{r} & U_{c}\\ - U_{c} & U_{r}\end{bmatrix}$$
and we emphasize that $U_{r}^{\dagger} = U_{r}$ and $U_{c}^{\dagger} = U_{c}$. Using the fact that $\widehat{U}$ is unitary, we see that $$U_{r}^{2} + U_{c}^{2} = I \quad \mbox{and} \quad U_{r} U_{c} = U_{c} U_{r}.$$

\section{Revisiting some of our previous examples}

\begin{Example}\label{UaUbbbb}
Returning to Example \ref{diagonallslslslsls} with 
$$U = \sum_{j = 1}^{n} \xi_j (\vec{e}_j \otimes \vec{e}_{j}),$$
an $n \times n$  diagonal unitary matrix with distinct eigenvalues $(\xi_j)_{j = 1}^{n}$,
we know that
$$\mathfrak{C}(\C^n_{\R}) = \R^n \oplus \R^n  = \left\{\begin{bmatrix} \vec{a} \\ \vec{b} \end{bmatrix}: \vec{a}, \vec{b} \in \R^n\right\}.$$
Also observe from \eqref{QqqrrrRR} that 
$$Q_{\R} (\vec{a} + i \vec{b}) = \vec{a} = \Re (\vec{a} + i \vec{b}).$$
Since $\vec{a} = \sum_{j = 1}^{n} a_j \vec{e}_j$, it follows that 
\begin{align*}
U_{r} \vec{a} & = Q_{\R} \Big(\sum_{j = 1}^{n} \xi_j \langle \vec{a}, \vec{e}_j\rangle \vec{e}_j\Big)\\
& = Q_{\R} \Big(\sum_{j = 1}^{n} (\xi_j a_j) \vec{e}_j\Big)\\
& = \sum_{j = 1}^{n} \Re (\xi_j a_j) \vec{e}_j\\
& = \Re U \vec{a},
\end{align*}
and, in a similar way, 
$$U_{c} \vec{b} = \Im U \vec{b}.$$
Putting this all together yields 
\begin{align*}
\widehat{U} \begin{bmatrix} \vec{a}\\ \vec{b}\end{bmatrix} & =  \begin{bmatrix}
U_r & - U_c\\
U_c &\phm U_r
\end{bmatrix} \begin{bmatrix} \vec{a}\\ \vec{b}\end{bmatrix} \\
&= \begin{bmatrix} \Re U \vec{a}\\ \Re U \vec{b}\end{bmatrix} + \begin{bmatrix}-\Im U \vec{b}\\ \phantom{-}\Im U \vec{a}\end{bmatrix}\\
 &= \phantom{-}\Re \begin{bmatrix} U \vec{a}\\ U \vec{b}\end{bmatrix} + i \Im  \begin{bmatrix} U \vec{a}\\ U \vec{b}\end{bmatrix}.
\end{align*}
From here one can see that $\widehat{U}$ is unitarily equivalent to $U$. 
\end{Example}

\begin{Example}\label{sdkfsdfkkW}
Let us compute the operator matrix $\widehat{U}$ when $(U f)(\xi) = \xi f(\xi)$ is the bilateral shift on $L^2(m)$ from Example \ref{bilallateralll}. From that discussion we know that
$$\mathfrak{C}(L^2(m)_{\R}) = L^{2}_{\R}(m) \oplus L^{2}_{\R}(m),$$
where $L^{2}_{R}(m)$ are the real-valued $L^2(m)$ functions.
From \eqref{QqqrrrRR} we have, for $f, g \in L^2_{\R}(m)$, 
$$Q_{\R} (f + i g) = f = \Re (f + i g).$$
For $f \in L^{2}_{\R}(m)$, we can use  \eqref{QQQQRRRRrrR} to see that 
\begin{align*}
U_{r} f & = Q_{\R} U|_{L^{2}_{\R}(m)} f\\
& =  Q_{\R} ((x + i y) f) \\
&= Q_{\R} (x f + i y f)\\
& = x f. 
\end{align*}
Thus, $U_{r} = M_{x}$ (multiplication by $x$) on $L^{2}_{\R}(m)$. In a similar way, observe that
\begin{align*}
U_{c} f  & = - Q_{\R} ((x + i y) (if)\\
&  = - Q_{\R} (ix f - y f)\\
&= y f.
\end{align*}
Thus, $U_{c} = M_{y}$ (multiplication by $y$) on $L^{2}_{\R}(m)$. In summary,
$$\widehat{U} = \begin{bmatrix} U_{r} & - U_{c}\\ U_{c} & \phm U_{r}\end{bmatrix} = \begin{bmatrix} M_{x} & - M_{y}\\ M_{y} & \phm M_{x}\end{bmatrix}.$$ For $\phi \in L^{\infty}(m)$ and  unimodular, the above discussion yields 
$$\widehat{M_{\phi}} = \begin{bmatrix} M_{\Re \phi} & - M_{\Im \phi}\\ M_{\Im \phi} & \phantom{-}M_{\Re \phi}\end{bmatrix}.$$ 
\end{Example}

\begin{Example}\label{FTTagain}
Let us return to our discussion of the Fourier--Plancherel transform $\mathcal{F}$ on $L^2(\R)$ from Example \ref{nknjdhfkhfghHJHJHJ} and Example \ref{FTexamplesaepaeate}. From \eqref{diagonalizeFFF} recall that 
$$\mathcal{F} = \sum_{n = 0}^{\infty} (-i)^{n} (H_{n} \otimes H_{n})$$
and that each Hermite function $H_n$ is real-valued. From the discussion in Example \ref{FTexamplesaepaeate} we know that  
$$\mathfrak{C}(L^2(\R)_{\R}) = L^{2}_{\R}(\R) \oplus L^{2}_{\R}(\R).$$
With 
$$h_{r} = \sum_{n = 0}^{\infty} \alpha_{n} H_n \; \; \mbox{and} \; \; h_{c} = \sum_{n = 0}^{\infty} \beta_{n} H_n,$$
where $(\alpha_n)_{n \geq 0}, (\beta_{n})_{n \geq 0}$ are real $\ell^2$ sequences, we see from \eqref{QQQQRRRRrrR} that 
\begin{align*}
U_{r} h_{r} & = Q_{\R} \mathcal{F} \Big(\sum_{n = 0}^{\infty} \alpha_n H_n\Big)\\
& = Q_{\R} \Big(\sum_{n = 0}^{\infty}\alpha_{n} (-i)^n H_n\Big)\\
& = \sum_{n = 0}^{\infty} \alpha_{2n} (-1)^n H_{2n}.
\end{align*}
In a similar way, 
\begin{align*}
U_{c} h_{c} & = -Q_{\R} \mathcal{F}(i h_c)\\
& = -Q_{\R} \mathcal{F}\Big(i\sum_{n = 0}^{\infty} \beta_{n} H_{n}\Big)\\
& = -Q_{\R} \Big(i \sum_{n = 0}^{\infty}\beta_{n} (-i)^n H_{n}\Big)\\
& = Q_{\R} \Big(\sum_{n = 0}^{\infty} \beta_n (-i)^{n + 1} H_n\Big)\\
& = \sum_{n = 0}^{\infty} \beta_{2 n + 1} (-1)^{n} H_{2 n + 1}.
\end{align*}
Thus,  the representation $\widehat{\mathcal{F}}$ on $\mathfrak{C}(L^2(\R)_{\R})$ becomes 
\begin{align*}
\widehat{\mathcal{F}} \begin{bmatrix} h_{r}\\ h_{c}\end{bmatrix} & = \begin{bmatrix} U_{r} & - U_{c}\\ U_{c} & \phm U_{r}\end{bmatrix} \begin{bmatrix} \sum_{n \geq 0} \alpha_n H_n\\ \sum_{n \geq 0} \beta_n H_n\end{bmatrix}\\
& = \begin{bmatrix} \sum_{n \geq 0} (\alpha_{2n} (-1)^n H_{2n}  - \beta_{2 n + 1} (-1)^n H_{2n + 1})\\
 \sum_{n \geq 0} (\alpha_{2n + 1} (-1)^n H_{2n + 1}  + \beta_{2 n} (-1)^n H_{2n})\end{bmatrix}.
\end{align*}
\end{Example}

\begin{Example}\label{hhhhtranspp6}
Let us return to our discussion of  the Hilbert transform $\mathscr{H}$ on $L^2(\R)$ from Example \ref{ldfgjdfjg888uuUU} and Example \ref{HT664466yY}. From \eqref{diagonalizeH} we know that 
$$\mathscr{H} = \sum_{n < 0} i (f_n \otimes f_n) + \sum_{n \geq 0} (-i) (f_n \otimes f_n).$$ Thus, if 
$$h_{r} = \sum_{n < 0} \alpha_{n} f_n + \sum_{n \geq 0} \alpha_n f_n \; \; \mbox{and} \; \; 
h_{c} = \sum_{n < 0} \beta_{n} f_n + \sum_{n \geq 0} \beta_n f_n,$$
where $(\alpha_{n})_{n = -\infty}^{\infty}$ and $(\beta_n)_{n = - \infty}^{\infty}$ are real $\ell^2(\Z)$ sequences, 
we see that 
\begin{align*}
U_{r} h_{r} & = Q_{\R} \mathscr{H} \Big(\sum_{n < 0} \alpha_{n} f_n + \sum_{n \geq 0} \alpha_n f_n\Big)\\
& = Q_{\R} \Big(\sum_{n < 0} i \alpha_{n} f_n + \sum_{n \geq 0}(-i) \alpha_n f_n\Big)\\
& = 0.
\end{align*}
On the other hand, 
\begin{align*}
U_{c} h_{c} & = -Q_{\R} \mathscr{H}(i h_c)\\
& = - Q_{\R} \mathscr{H} \Big(\sum_{n < 0} i \beta_{n} f_n + \sum_{n \geq 0} i \beta_n f_n\Big)\\
& = -  Q_{\R} \Big(\sum_{n < 0} i (i) \beta_{n} f_n + \sum_{n \geq 0}(-i)  (i)\beta_n f_n\Big)\\
& = -Q_{\R} \Big(\sum_{n < 0} - \beta_n f_n + \sum_{n \geq 0} \beta_n f_n\Big)\\
& = \sum_{n < 0} \beta_n f_n - \sum_{n \geq 0} \beta_n f_n.
\end{align*}
Thus, the representation $\widehat{\mathscr{H}}$ of $\mathscr{H}$ on $\mathfrak{C}(L^2(\R)_{\R})$ becomes 
\begin{align*}
\widehat{\mathscr{H}} \begin{bmatrix} h_r\\ h_c\end{bmatrix} & = \begin{bmatrix} U_{r} & - U_{c}\\ U_{c} & \phm U_{r}\end{bmatrix} \begin{bmatrix} h_r\\ h_c\end{bmatrix}\\
& = \begin{bmatrix} 0 & - U_{c}\\ U_{c} & \phm 0 \end{bmatrix} \begin{bmatrix} h_r\\ h_c\end{bmatrix}\\
& = \begin{bmatrix} -  U_c h_c \\  \phm U_{c} h_r\end{bmatrix}\\
& = \begin{bmatrix} - \sum_{n < 0} \beta_n f_n + \sum_{n \geq 0} \beta_n f_n\\
\phantom{-}\sum_{n < 0} \alpha_n f_n - \sum_{n \geq 0} \alpha_n f_n 
\end{bmatrix}.
\end{align*}
\end{Example}

\section{A Description of the Orbit}

Using our model $\widehat{U}$ on $\mathfrak{C}(\mathcal{H}_{\R})$ for a unitary operator $U$ on $\mathcal{H}$ from the  previous section, we have the following description of $\mathfrak{O}_c(U)$. 

\begin{Theorem}
Let $U$ be a fixed unitary operator on a separable Hilbert space $\mathcal{H}$. For a unitary operator $V$ on $\mathcal{H}$, the following are equivalent.
\begin{enumerate}
\item[(a)] $V \in \mathfrak{O}_c(U)$.
\item[(b)] $V = W U^{*} W^{*}$, where
$$\widehat{W} = \begin{bmatrix} W_{r} & -W_{c}\\ W_{c} & \phm W_{r}\end{bmatrix},$$
where $W_{r}, W_{c}$ are real linear operators on $\mathcal{H}_{\R}$ satisfying
\begin{equation}\label{wrWccc}
W_{r}^{\dagger} = W_{r}, \; W_{c}^{\dagger} = W_{c}, \; W_{r} W_{c} = W_{c} W_{r}, \;  W_{r}^{2} + W_{c}^{2} = I.
\end{equation}
\item[(c)] There are real linear operators $W_{r}, W_{c}$  on $\mathcal{H}_{\R}$ satisfying  the conditions in \eqref{wrWccc} such that the operator $\widehat{V}$ on $\mathfrak{C}(\mathcal{H}_{\R})$ satisfies 
$\widehat{V} = \widehat{W} \widehat{U}^{*} \widehat{W}^{*},$ where
$$\widehat{W} = \begin{bmatrix} W_{r} & -W_{c}\\ W_{c} & \phm W_{r}\end{bmatrix}.$$
\end{enumerate}
\end{Theorem}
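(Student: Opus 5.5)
The plan is to reduce everything to Proposition \ref{sdfhjsd9f9ds9dsf9sdf9s9df}, applied with the conjugation $J$ from Lemma \ref{GLuke}, which satisfies $JUJ = U^{*}$. By that proposition, $V \in \mathfrak{O}_c(U)$ if and only if $V = W U^{*} W^{*}$ for some unitary $W$ on $\mathcal{H}$ with $JWJ = W^{*}$. So the task is to translate the two conditions ``$W$ unitary'' and ``$JWJ = W^{*}$'' into the model $\mathfrak{C}(\mathcal{H}_{\R})$ via the isometric isomorphism $X$ from \eqref{XXXXex}.

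For (a) $\Leftrightarrow$ (b), I would take $W$ as in Proposition \ref{sdfhjsd9f9ds9dsf9sdf9s9df} and set $\widehat{W} = XWX^{-1}$. Since $W$ is $\C$-linear, Lemma \ref{C-linearW} gives $\widehat{W}$ the form $\begin{bmatrix} W_r & -W_c\\ W_c & W_r\end{bmatrix}$, with $W_r = Q_{\R} W|_{\mathcal{H}_{\R}}$ and $W_c = -Q_{\R}(iW(\cdot))$, exactly as in the computation preceding Theorem \ref{sfddsfsdf}. Next, $JWJ = W^{*}$ is equivalent to $\widehat{J}\widehat{W}\widehat{J} = \widehat{W}^{*}$, because $\widehat{J} = XJX^{-1}$ by Proposition \ref{Jayyyyhat} and $X$ is unitary, so $\widehat{W}^{*} = XW^{*}X^{-1}$. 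By the $\widehat{J}$-symmetry lemma this holds if and only if $W_r^{\dagger} = W_r$ and $W_c^{\dagger} = W_c$.

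Unitarity of $W$ is equivalent to $\widehat{W}^{*}\widehat{W} = \widehat{W}\widehat{W}^{*} = I$. I would compute this with the adjoint formula lemma, which gives $\widehat{W}^{*} = \begin{bmatrix} W_r & W_c\\ -W_c & W_r\end{bmatrix}$ once the self-adjointness conditions are in place. The product $\widehat{W}^{*}\widehat{W}$ then has diagonal entries $W_r^2 + W_c^2$ and off-diagonal entries $\pm(W_rW_c - W_cW_r)$. Hence, given self-adjointness, unitarity is equivalent to $W_r^2 + W_c^2 = I$ together with $W_rW_c = W_cW_r$, and the other product $\widehat{W}\widehat{W}^{*}$ yields the same conditions. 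This gives the conditions in \eqref{wrWccc}.

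For the converse, I would start from real linear $W_r, W_c$ satisfying \eqref{wrWccc}. The matrix $\widehat{W}$ is then $\C$-linear by Lemma \ref{C-linearW}, unitary by the computation above, and $\widehat{J}$-symmetric. Setting $W = X^{-1}\widehat{W}X$ and applying Proposition \ref{sdfhjsd9f9ds9dsf9sdf9s9df} gives $V = WU^{*}W^{*} \in \mathfrak{O}_c(U)$.

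Finally, (b) $\Leftrightarrow$ (c) is just conjugation by $X$. Here $\widehat{V} := XVX^{-1}$, and $\widehat{U}^{*} = XU^{*}X^{-1} = \widehat{U^{*}}$ by Proposition \ref{Jayyyyhat}, so $V = WU^{*}W^{*}$ holds exactly when $\widehat{V} = \widehat{W}\widehat{U}^{*}\widehat{W}^{*}$.

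The main obstacle is bookkeeping rather than ideas. The adjoint lemma and the $\widehat{J}$-symmetry lemma were only sketched in the paper, so I would verify the inner-product computation in $\mathfrak{C}(\mathcal{H}_{\R})$ directly, using that the real inner product on $\mathcal{H}_{\R}$ is real-valued. A second point to check is that boundedness of $W_r$ and $W_c$ follows from $\|W_r\vec{h}\|^2 + \|W_c\vec{h}\|^2 = \|\vec{h}\|^2$, so that the real adjoints of Remark \ref{reasaalalalalll} exist.
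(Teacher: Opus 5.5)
Your proposal is correct and follows essentially the same route as the paper. Both arguments reduce to Proposition \ref{sdfhjsd9f9ds9dsf9sdf9s9df} with the conjugation $J$ of Lemma \ref{GLuke}, then transport ``$W$ unitary'' and ``$JWJ=W^{*}$'' through $X$ using Lemma \ref{C-linearW}, the adjoint formula and the $\widehat{J}$-symmetry lemma; your explicit computation of $\widehat{W}^{*}\widehat{W}$ and $\widehat{W}\widehat{W}^{*}$ (which I checked) fills in the step the paper compresses into ``apply the results from the previous section.''
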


\begin{proof}
$(a) \Longrightarrow (b)$: Recall the conjugation $J$ on $\mathcal{H}$ from Lemma \ref{GLuke} which fixes every element of $\mathcal{H}_{\R}$ and satisfies $J U J = U^{*}$.  If $V \in \mathfrak{O}_c(U)$, then Proposition \ref{sdfhjsd9f9ds9dsf9sdf9s9df} says there is a unitary operator $W$ on $\mathcal{H}$ with $J W J = W^{*}$ and $V = W U^{*} W^{*}$. Now apply the results from the previous section to obtain (b).

$(b) \Longrightarrow (a)$: The conditions on $\widehat{W}$ say that $\widehat{W}$ is unitary and $\widehat{J}$ symmetric. Via the isometric isomorphism $X$ from \eqref{XXXXex}, this means that $W$ is unitary and $J W J = W^{*}$. Proposition \ref{sdfhjsd9f9ds9dsf9sdf9s9df} now completes the proof.

$(b) \iff (c)$ is clear.
\end{proof}

\begin{Remark}\label{sdfsdfRemaRKUU***}
Using (c) from the previous theorem and setting $W_{r} = I$ and $W_{c} = 0$, we see that $U^{*} \in \mathfrak{O}_c(U)$, which is confirmed by Proposition \ref{SdfsdfdsfcvvvvVV} (see also Remark \ref{2..99999} and Remark \ref{sdfsdfRemaRKUU***}).
\end{Remark}

\begin{Example}
Let us continue the discussion from Example \ref{UaUbbbb}, where $U$ is a diagonal $n \times n$ unitary matrix with distinct eigenvalues. In that example, we showed that 
$$
\widehat{U}  \begin{bmatrix} \vec{a}\\ \vec{b}\end{bmatrix} =  \begin{bmatrix}
U_r & - U_c\\
U_c &\phm U_r
\end{bmatrix} \begin{bmatrix} \vec{a}\\ \vec{b}\end{bmatrix},
$$
where $U_{r} \vec{a} = \Re (U \vec{a})$ and $U_{c} \vec{b} = \Im(U \vec{b})$ for $\vec{a}, \vec{b} \in \R^{n}$. Suppose $X$ is an $n \times n$  self transpose unitary matrix with {\em real} entries (for example, $X = I - 2 (\vec{x} \otimes \vec{x})$ and $\vec{x}$ is a unit vector from $\R^n$ -- a Householder matrix). If $W_{r} \vec{a} := X \vec{a}$, then, since $X^2 = I$ and $X^{t} = X$, we see that the real linear operator $W_{r}$ satisfies $W_{r}^2 = I$ and $W_{r}^{\dagger} = W_r$. Then, with $W_c = 0$, 
$$ \begin{bmatrix} W_{r} & 0\\ 0 & W_{r}\end{bmatrix}  \begin{bmatrix} \phantom{-}U_r &  U_c\\ - U_c & U_r\end{bmatrix}  \begin{bmatrix} W_{r} & 0\\ 0 & W_{r}\end{bmatrix} \begin{bmatrix} \vec{a}\\ \vec{b}\end{bmatrix}$$
is equal to 
\begin{align*}
\begin{bmatrix} \phantom{-}\Re(X U X \vec{a}) + \Im( X U X \vec{b})\\ -\Im(XUX \vec{a}) + \Re(X U X \vec{b})\end{bmatrix} & = \Re \begin{bmatrix} XUX \vec{a}\\ XUX \vec{b}\end{bmatrix} + \Im \begin{bmatrix} XUX \vec{b} \\ -XUX \vec{a}\end{bmatrix}\\
& = \Re \begin{bmatrix} XUX \vec{a}\\ XUX \vec{b}\end{bmatrix}  - i \Im  \begin{bmatrix} XUX \vec{a} \\ XUX \vec{b}\end{bmatrix}\\
& = \widehat{X \overline{U}X} \begin{bmatrix} \vec{a}\\\vec{b}\end{bmatrix}.
\end{align*}
Thus, $X \overline{U} X \in \mathfrak{O}_c(U)$, which is rediscovery of Proposition \ref{dfgdfgkkKKK12}.
\end{Example}

\begin{Example}
For the bilateral shift $(U f)(\xi) = \xi f(\xi)$ on $L^2(m)$, we know from Example \ref{sdkfsdfkkW} that
$$\widehat{U} = \begin{bmatrix} M_{x} & - M_{y}\\ M_{y} & \phantom{-}M_{x}\end{bmatrix}.$$
Thus, viewing everything in the $\mathfrak{C}(\mathcal{H}_{\R})$ setting, we see that the conjugate orbit of $U$ is the collection of operators 
$$ \begin{bmatrix} W_{r} & -W_{c}\\ W_{c} & \phantom{-}W_{r}\end{bmatrix}  \begin{bmatrix} \phantom{-}M_{x} &  M_{y}\\ - M_{y} & M_{x}\end{bmatrix}  \begin{bmatrix} \phantom{-}W_{r} & W_{c}\\ -W_{c} & W_{r}\end{bmatrix},$$
where $W_{r}, W_{c}$  are real linear operators on $L^{2}_{\R}(m)$ satisfying 
$$W_{r}^{\dagger} = W_{r}, \; W_{c}^{\dagger} = W_{c}, \; W_{r} W_{c} = W_{c} W_{r}, \;  W_{r}^{2} + W_{c}^{2} = I.$$
To reconnect with some of our previous examples, consider the class of operators
$$ \begin{bmatrix} W_{r} & 0\\ 0 & W_{r}\end{bmatrix}  \begin{bmatrix} \phantom{-}M_{x} &  M_{y}\\ -M_{y} & M_{x}\end{bmatrix}  \begin{bmatrix} W_{r} &0\\ 0& W_{r}\end{bmatrix},$$
where $(W_{r} f)(\xi) = f(\alpha(\xi))$ on $L^2_{\R}(m)$ and $\alpha$ is a measure preserving transformation on $\T$ for which $\alpha \circ \alpha = \operatorname{id}$. Examples include $\alpha(\xi) = \overline{\xi}$, $\alpha(\xi) = - \xi$, or any reflection of $\T$ through a line passing through the origin. Then $W_{r}^{\dagger} = W_{r}$ and $W_{r}^{2} = I$.  In this case, the above matrix multiplication turns out to be the representation of the unitary multiplication operator
 $f \mapsto  \bar \alpha f$ from Example \ref{66yTTxx)99}. 

 In a similar way, one has the class of examples
$$ \begin{bmatrix} 0 & -W_{c}\\ W_{c} & 0\end{bmatrix}  \begin{bmatrix} \phantom{-}M_{x} &  M_{y}\\ - M_{y} & M_{x}\end{bmatrix}  \begin{bmatrix} 0 & W_{c}\\ -W_{c} & 0\end{bmatrix},$$
and $W_c f = f \circ \alpha$ as above. This will yield the same unitary operator as before.
\end{Example}

\begin{Example}
Inspired by Example \ref{Ealphabetatsst},  let $\alpha: \T \to \T$ be (Lebesgue) measure preserving with $\alpha \circ \alpha = \operatorname{id}$, $\beta: \T \to \{-1, 1\}$  be (Lebesgue) measurable such that $\beta \circ \alpha = \beta$, and $s, t \in \R$ with $s^2 + t^2 = 1$. Setting 
$$W_{r} f := s (f \circ \alpha) \; \mbox{and} \; W_{c} f := t \beta f, \quad f \in L^2_{\R}(m),$$
one can verify that $W_{r}^{\dagger} = W_{r}, W_{c}^{\dagger} = W_c, W_{r} W_{c} = W_{c} W_{r}$, and $W_{r}^{2} + W_{c}^{2} = I$. Moreover, 
$$ \begin{bmatrix} W_{r} & -W_{c}\\ W_{c} & \phantom{-}W_{r}\end{bmatrix}  \begin{bmatrix} \phantom{-}M_{x} &  M_{y}\\ - M_{y} & M_{x}\end{bmatrix}  \begin{bmatrix} \phantom{-}W_{r} & W_{c}\\ -W_{c} & W_{r}\end{bmatrix},$$
 turns out to be the representation of 
the unitary operator 
$$ f \mapsto (s^2 M_{\bar \alpha} + t^2 M^{*}) f + i s t \beta (M_{\bar \alpha} - M^{*}) (f \circ \alpha)$$
from 
 Example \ref{Ealphabetatsst}.
\end{Example}

\begin{Example}\label{examdepleFFTTT}
For the Fourier--Plancherel transform $\mathcal{F}$ on $L^2(\R)$, where, from Example \ref{FTTagain} the adjoint $\mathcal{F}^{*}$ satisfies 
$$\widehat{\mathcal{F^{*}}} \begin{bmatrix}  \sum_{n \geq 0} \alpha_n H_{n}\\ \sum_{n \geq 0} \beta_{n} H_n \end{bmatrix} = \begin{bmatrix} \sum_{n \geq 0} (\alpha_{2n} (-1)^n H_{2n}  + \beta_{2 n + 1} (-1)^n H_{2n + 1})\\
\phantom{-} \sum_{n \geq 0} (-\alpha_{2n + 1} (-1)^n H_{2n + 1}  + \beta_{2 n} (-1)^n H_{2n})\end{bmatrix},$$
 we can set $(W_{r} f)(x) = f(-x)$. Then, using the fact that the Hermite functions $H_{n}$ are odd functions when $n$ is odd and even functions when $n$ is even, we see that   
 $$(W_{r} H_{n})(x) = (-1)^{n} H_{n}(x),$$
and a calculation reveals that 
 $$ \begin{bmatrix} W_{r} & 0\\ 0 & W_{r}\end{bmatrix}  \widehat{\mathcal{F^{*}}}  \begin{bmatrix} W_{r} & 0\\ 0 & W_{r}\end{bmatrix}  = \widehat{\mathcal{F^{*}}}.$$
In other words, $\mathcal{F}^{*} \in \mathfrak{O}_c(\mathcal{F})$, which we know  already from Proposition \ref{SdfsdfdsfcvvvvVV}.
 Setting $(W_{c} f)(x) =- f(-x)$, a similar calculation yields 
  $$ \begin{bmatrix} 0 & -W_{c}\\ W_{c} & 0\end{bmatrix}  \widehat{\mathcal{F^{*}}}  \begin{bmatrix} 0 & W_{c}\\ -W_c & 0 \end{bmatrix}  = \widehat{\mathcal{F}}$$ and so $\mathcal{F} \in \mathfrak{O}_c(\mathcal{F})$. This can be confirmed by the fact that $\mathcal{F}^{*}$ is unitarily equivalent  to $\mathcal{F}$ (see the discussion in Example \ref{FTFTFT}  along with  Proposition \ref{e0roiekfgdFF}).
\end{Example}

\begin{Example}\label{HTwlkdfj77YY}
For the Hilbert transform $\mathscr{H}$ on $L^2(\R)$, where from Example \ref{e0roiekfgdFF} the adjoint $\mathscr{H}^{*}$ satisfies 
\begin{align*}
& \widehat{\mathscr{H}^{*}} \begin{bmatrix} \sum_{n < 0} \alpha_{n} f_{n} + \sum_{n \geq 0} \alpha_n f_n \\ \sum_{n < 0} \beta_n f_n + \sum_{n \geq 0} \beta_n f_n \end{bmatrix}
 = \begin{bmatrix} \phantom{-} \sum_{n < 0} \beta_n f_n - \sum_{n \geq 0} \beta_n f_n \\ -\sum_{n < 0} \alpha_n f_n + \sum_{n \geq 0} \alpha_n f_n\end{bmatrix},
\end{align*}
we can set $(W_{c} f)(x) = f(x)$ to obtain 
$$\begin{bmatrix} 0 & -W_{c}\\W_{c} & 0\end{bmatrix} \widehat{\mathscr{H}^{*}} \begin{bmatrix} 0 & W_{c}\\ -W_{c} & 0\end{bmatrix} = - \widehat{\mathscr{H}}.$$
Using the well-known fact that $\mathscr{H}^{*} = - \mathscr{H}$ \cite[p.~278]{MR4545809}, we see that $\mathscr{H}^{*}$  belongs to $\mathfrak{O}_c(\mathscr{H})$ (which we know). When $(W_{c} f)(x) = f(-x)$, we use the fact that $W_{c} f_{n} = - f_{-n - 1}$, $n \in \Z$, to see that 
$$\begin{bmatrix} 0 & -W_{c}\\W_{c} & 0\end{bmatrix} \widehat{\mathscr{H}^{*}} \begin{bmatrix} 0 & W_{c}\\ -W_{c} & 0\end{bmatrix} =  \widehat{\mathscr{H}},$$
which shows that $\mathscr{H}$ belongs to $\mathfrak{O}_c(\mathscr{H})$. 
\end{Example}



\chapter{Generalizations for Future Study}

For a fixed unitary operator $U$ on a complex Hilbert space  $\mathcal{H}$, this paper focused on the collection of unitary operators  $\mathfrak{O}_c(U) = \{CUC: \mbox{$C$ is a conjugation}\}$. One might also consider a study of $\mathfrak{O}_c(N)$, analogously defined, where $N$ is a fixed normal operator on $\mathcal{H}$, i.e., $N^{*} N = N N^{*}$. An analysis  shows that $\mathfrak{O}_c(N)$ is contained in the class of normal operators and some of the  discussion in this paper carries over to this new class. An even broader investigation might consider $\mathfrak{O}_c(A)$, where $A$ is {\em any} bounded linear operator on $\mathcal{H}$. 

Still another line of exploration could be $\mathfrak{O}_{au}(A)$, the set $X A X^{\star}$, where $X$ is antilinear and unitary $X^{-1} = X^{\star}$  ($X^{\star}$ is the antilinear conjugate of $X$ from \eqref{oooOOkkkKKKK}). Notice we are not assuming $X^2 = I$. Some call these $X$ {\em antilinear unitary operators} of which conjugations form a particular subclass. Perhaps call $\mathfrak{O}_{au}(A)$ the {\em antilinear orbit} of $A$. When $U$ is unitary, how does $\mathfrak{O}_{au}(U)$ differ from the conjugate orbit $\mathfrak{O}_c(U)$ considered in this paper? We invite the reader to join the discussion. 

\bibliographystyle{plain}

\bibliography{references}

\end{document}